\newcolumntype{C}{>{$}c<{$}} %
	\newcolumntype{R}{>{$}r<{$}} %
\author{Giovanni Italiano}
\address{\parbox{\linewidth}{Mathematical Institute, University of Oxford, \\Andrew Wiles Building,
		Woodstock Road, OX2 6GG Oxford, UK}\vspace{1.5pt}}
\email{italiano at maths dot ox dot ac dot uk}
\author{Matteo Migliorini}
\address{Karlsruher Institut für Technologie, Englerstraße 2, 76131 Karlsruhe}
\email{matteo dot migliorini at kit dot edu}
\title{Perfect circle-valued Morse functions on hyperbolic $6$-manifolds}
\def\numfacets{c}
\def\polytopename{P}
\NewDocumentCommand{\pol}{o}{\polytopename\IfValueT{#1}{_{#1}}}
\def\dualpol{P^*}
\NewDocumentCommand{\facet}{o}{F\IfValueT{#1}{_{#1}}}
\NewDocumentCommand{\varfacet}{o}{F'\IfValueT{#1}{_{#1}}}
\NewDocumentCommand{\face}{o}{P'\IfValueT{#1}{_{[#1]}}}
\NewDocumentCommand{\dualface}{o}{P'^*\IfValueT{#1}{_{[#1]}}}
\def\mfld{M}
\def\truncmfld{\bar M}
\def\truncpolytopename{\bar \polytopename}
\NewDocumentCommand{\truncpol}{o}{\truncpolytopename\IfValueT{#1}{_{#1}}}
\def\extcubulation{\bar\cubulation}
\def\extf{\bar f}
\def\cubulation{C}
\def\coxetergrp{W}
\def\commutators{W'}
\NewDocumentCommand{\cube}{o}{Q\IfValueT{#1}{_{#1}}}
\NewDocumentCommand{\varcube}{o}{Q'\IfValueT{#1}{_{#1}}}
\NewDocumentCommand{\varvarcube}{o}{Q''\IfValueT{#1}{_{#1}}}
\NewDocumentCommand{\state}{o}{s\IfValueT{#1}{_{#1}}}
\NewDocumentCommand{\varstate}{o}{s'\IfValueT{#1}{_{#1}}}
\NewDocumentCommand{\status}{o m}{\state[#1](#2)}
\NewDocumentCommand{\inheritedstate}{o}{s\IfValueT{#1}{_{[#1]}}}
\NewDocumentCommand{\inheritedstatus}{o m}{\inheritedstate[#1](#2)}
\def\Out{\ensuremath{O}}
\def\In{\ensuremath{I}}
\def\stateout{\state_\Out}
\def\statein{\state_\In}
\def\varstateout{\varstate_\Out}
\def\move{m}
\def\deffacets{\mathfrak F_{\cube}}
\def\baryf{f}
\def\foneskel{f^{(1)}}
\def\abcover{\widetilde\mfld}
\NewDocumentCommand{\vectorspace}{s}{\IfBooleanTF#1{(\ZZ[2])}{\ZZ[2]} ^\numfacets}
\def\gosset{2_{21}}
\begin{document}

\begin{abstract}
	We build the first example of a hyperbolic $6$-manifold that admits a perfect circle-valued Morse function, which can be considered as the analogue of a fibration over the circle for manifolds with non-vanishing Euler characteristic.
	As a consequence, we obtain a new example of a subgroup of a hyperbolic group which is of type $\mathcal{F}_2$ but not $\mathcal{F}_3$.
\end{abstract}

\maketitle

\section{Introduction}

A peculiar phenomenon that has been widely studied in the previous decades is the existence of many hyperbolic $3$-manifolds that are also fibre bundles over the circle $S^1$. The first example of such a manifold was constructed by J\o rgensen \cite{JorgensenFibering} in 1977, and quickly captured the interest of many topologists, as some behaviours of these examples are quite paradoxical, as noted in \cite{CannonThurston}.

After Jørgensen's example, Thurston classified which diffeomorphisms of surfaces yield mapping tori that carry hyperbolic metrics \cite{Thurston3Manifolds}, providing a simple tool to construct many such fibring hyperbolic $3$-manifolds.
A celebrated theorem of Agol and Wise \cites{AgolVirtualHaken, WiseGroupStructure} shows how such a phenomenon is actually ubiquitous; every finite-volume hyperbolic $3$-manifold has a finite cover that fibres over $S^1$.

Despite the omnipresence of fibring manifolds in dimension $3$, the first progress in building similar examples in higher dimensions is much more recent. The first example in higher dimension is due to Martelli and the authors \cite{IMMHyperbolic5Manifold}, and consists of a $5$-manifold tessellated by some copies of a particular right-angled polytope.

In this paper, we look at dimension $6$. Hyperbolic $6$-manifolds cannot fibre due to an Euler characteristic obstruction; the natural generalisation is the notion of \emph{perfect circle-valued Morse function}. We prove the following.

\begin{maintheorem}\label{thm:M6}
	There exists a hyperbolic $6$-manifold $M^6$ that admits a perfect circle-valued Morse function.
\end{maintheorem}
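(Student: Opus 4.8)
The plan is to mimic the strategy of the five-dimensional example in \cite{IMMHyperbolic5Manifold}, adapted to a right-angled hyperbolic $6$-polytope. First I would choose an appropriate right-angled polytope $\pol$ in $\Hyp{6}$ of finite volume; the natural candidate is the polytope whose facets and combinatorics are governed by the Gosset polytope $\gosset$, whose symmetry and right-angled structure make the associated right-angled Coxeter group $\coxetergrp$ tractable. From $\coxetergrp$ one builds a hyperbolic $6$-manifold $\mfld$ as a quotient of $\Hyp{6}$ by a suitable torsion-free finite-index subgroup — concretely, by passing to a small index-$2^{\numfacets}$ type subgroup (a colouring / the kernel of the map $\coxetergrp \to (\ZZ[2])^{\numfacets}$) so that $\mfld$ is tessellated by finitely many copies of $\pol$ glued along their facets.

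Next I would construct the circle-valued Morse function combinatorially. The idea is to define a map $\extf \colon \mfld \to S^1$ that is affine on each cube of a suitable cubulation $\cubulation$ dual to the right-angled structure: assign integer (or real) values to the vertices of the cubulation compatibly with the gluings so that the induced map descends to $\Hyp{6}/\text{(translation)}$, hence to $S^1$. One then checks that this map is Morse by analysing its ascending and descending links $\linkup{v}$ and $\linkdown{v}$ (and their ``fake'' variants $\flinkup{v}$, $\flinkdown{v}$, $\coflinkup{v}$, $\coflinkdown{v}$) at each vertex. Perfection of the Morse function means that each such link is homologically as simple as possible — ideally a homology point in the relevant degree, or more precisely that the Novikov homology vanishes so that the number of ``critical cells'' is minimised. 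The bulk of the argument is a finite but intricate combinatorial verification that every ascending and descending link in the tessellation has the right topology; this can be organised via the notion of \emph{states} on the cubes (the $\state$ data) that track which direction each cube contributes, exactly as in the $5$-manifold paper.

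The main obstacle, and the genuinely new content over dimension $5$, is handling the non-vanishing Euler characteristic: in even dimension there is no honest fibration, so one cannot hope for contractible ascending/descending links. Instead one must allow links that carry a single nontrivial homology class in the middle-ish dimension, placed so that the Euler characteristic contributions add up correctly, while still being connected and simply connected enough to make the function ``perfect'' in the Morse-theoretic sense (i.e. the Morse inequalities over the Novikov ring are equalities). Concretely, the hard step is to find a face colouring / state assignment on the $\gosset$-based tessellation for which \emph{all} ascending and descending links are simultaneously well-controlled — this is a delicate global consistency problem, since a local modification to fix one link can break another. I would expect to resolve it by exploiting the large symmetry group of the Gosset polytope to reduce the number of cases, together with an explicit computer-assisted check of the finitely many link types that remain. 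Once the combinatorial map is shown to be a perfect circle-valued Morse function, \Cref{thm:M6} follows immediately; the application to subgroups of hyperbolic groups of type $\mathcal F_2$ but not $\mathcal F_3$ then comes from taking the corresponding infinite cyclic cover and applying Bestvina--Brady--style finiteness arguments to its fundamental group, which sits inside the hyperbolic group $\pi_1(\mfld)$.
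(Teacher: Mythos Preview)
Your outline captures the broad shape of the argument --- the right-angled polytope dual to $\gosset$, the L\"obell manifold, a cubulation with states, and link analysis --- but it skips over the specific technical machinery that makes the construction work, and in a couple of places it drifts in a direction that would not succeed.

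First, the set of moves one is forced to use on $\pol$ is \emph{not sparse}: there are adjacent facets in the same move, so the edge-orientations do not extend to an affine map on the cubes of $\cubulation$. The paper's fix is to pass to the barycentric subdivision $\sd\cubulation$ and define $\baryf$ inductively there; this introduces a new vertex at the barycentre of every cube, and one now has to control ascending and descending links at all of them. The key new tool is the notion of \emph{good} versus \emph{bad} faces together with the \emph{inherited state} (\cref{def:good-polytope,def:inherited-state}) and the structural \cref{thm:inherited-state-collapsibility}, which reduces the link analysis to checking total legality of inherited states on bad faces. Your proposal does not anticipate this subdivision or the inherited-state mechanism, and without it the map simply is not a Bestvina--Brady Morse function.

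Second, the criterion for ``perfect'' here is not a Novikov-homology or Morse-inequality statement; it is the concrete geometric fact that every ascending and descending link is either \emph{collapsible} or \emph{collapses to a PL $2$-sphere}. The latter occur exactly at the barycentres of certain bad $6$-cubes that split as products of three monochromatic squares (\cref{cor:face-link-good-cube-and-bad-squares}), and they become honest index-$3$ critical points after the smoothing argument of \cref{sec:smoothing}. A homological condition on links would not suffice to produce a smooth perfect Morse function; one needs collapsibility to a sphere.

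Third, since $\pol$ has ideal vertices, $\cubulation$ is only a spine of $\mfld$, not $\mfld$ itself. One must extend $\baryf$ to a cubulation $\extcubulation$ of the truncated manifold and verify a separate combinatorial condition at each cusp (\cref{res:fibering-cusp-condition}) to ensure the extension still has collapsible links on and near the boundary. This step is absent from your sketch.

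Finally, your last sentence is not quite right: $\pi_1(\mfld)$ is \emph{not} Gromov-hyperbolic, because $\mfld$ is cusped. To get a subgroup of a hyperbolic group one performs a $2\pi$-filling of the cusps and invokes Fujiwara--Manning, as in \cref{sec:finiteness}; the kernel of $f_*$ alone does not sit inside a hyperbolic group.
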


To prove the theorem, we construct an explicit manifold $M^6$ equipped with a Morse function onto the circle whose critical points are all of index $3$. The manifold $M^6$ is cusped, and as was the case for the fibring $5$-manifold $M^5$ in \cite{IMMHyperbolic5Manifold}, the techniques used in this paper cannot produce a closed manifold in dimension $ n \geq 5 $ due to the lack of compact right-angled hyperbolic polytopes \cite{PotyagailoVinberg}.

As an analogue of \cite{BattistaHyperbolic4manifolds}*{Corollary 3}, we get the following.
\begin{maincorollary}
    There are infinitely many finite-volume (cusped) hyperbolic $6$-manifolds $M$ admitting a handle decomposition with bounded numbers of $1$-, $2$-, $4$- and $5$-handles. In particular, there are infinitely many $M$ with bounded Betti numbers $b_1(M)$, $b_2(M)$, $b_4(M)$, $b_5(M)$ and bounded rank of $\pi_1(M)$.
\end{maincorollary}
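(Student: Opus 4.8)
The plan is to deduce the corollary from \Cref{thm:M6} by passing to the cyclic covers dual to the perfect circle-valued Morse function, in the spirit of \cite{BattistaHyperbolic4manifolds}*{Corollary 3}. Let $f\colon M^6\to S^1$ be the map given by \Cref{thm:M6}; after composing with a covering $S^1\to S^1$ we may assume $f_*\colon\pi_1(M^6)\onto\ZZ$. All critical points of $f$ have index $3$; write $d$ for their number. For each $k\ge 1$ the composite $\pi_1(M^6)\xrightarrow{f_*}\ZZ\onto\ZZ[k]$ defines a connected cyclic cover $p_k\colon M_k\to M^6$ of degree $k$, and $f$ lifts to a perfect circle-valued Morse function $f_k\colon M_k\to S^1$ whose critical points are the $kd$ preimages of those of $f$, all still of index $3$. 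Each $M_k$ is a finite-volume cusped hyperbolic $6$-manifold with $\operatorname{vol}(M_k)=k\operatorname{vol}(M^6)$, so the $M_k$ are pairwise non-isometric; this is the desired infinite family.

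Next I would turn $f_k$ into a handle decomposition with the advertised bounds. One works with the compact truncated manifold $\bar M^6$ and a circle-valued Morse function on it whose regular level set $N$ is a \emph{compact} $5$-manifold with boundary, and correspondingly with the compact truncations $\bar M_k\simeq M_k$. Cutting $\bar M^6$ along $N$ exhibits it as a cobordism from $N$ to $N$ obtained from $N\times[0,1]$ by attaching $d$ handles of index $3$, one per critical point; then $\bar M_k$ is $k$ copies of this cobordism glued cyclically, i.e.\ $N\times[0,1]$ with $kd$ index-$3$ handles attached and its two ends identified by the monodromy. Fix once and for all a handle decomposition of $N$ with $c_i$ handles of index $i$, $0\le i\le 5$. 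Each $i$-handle of $N$ contributes an $i$-handle to the vertical piece $N\times[0,1]$ and, after the cyclic identification of the ends, one further $(i+1)$-handle; together with the $kd$ index-$3$ handles this produces a handle decomposition of $\bar M_k$ with $c_0$ handles of index $0$, $c_0+c_1$ of index $1$, $c_1+c_2$ of index $2$, $c_2+c_3+kd$ of index $3$, $c_3+c_4$ of index $4$, $c_4+c_5$ of index $5$, and $c_5$ of index $6$. Thus the numbers of $1$-, $2$-, $4$- and $5$-handles (indeed also of $0$- and $6$-handles) are bounded independently of $k$, while only the number of $3$-handles grows — and it must, since $\chi(M_k)=k\,\chi(M^6)\ne 0$.

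The ``in particular'' is then formal: $b_i(M_k)=b_i(\bar M_k)$ is at most the number of $i$-handles, so $b_1,b_2,b_4,b_5$ are uniformly bounded; and $\pi_1(M_k)=\pi_1(\bar M_k)$ is generated by the $1$-handles (relative to a spanning tree in the $0$-handles), whence $\operatorname{rank}\pi_1(M_k)\le c_0+c_1$ is uniformly bounded as well.

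The only point that genuinely needs care — and the sole content beyond \Cref{thm:M6} — is the noncompactness of $M^6$: one must arrange $f$ and its regular level set to be well-behaved near the cusps, so that $N$ and $\bar M^6$ really are compact with finite handle decompositions and so that passing to the covers $M_k$ does not multiply the number of cusp cross-sections without bound. In the right-angled polytope model underpinning \Cref{thm:M6} the combinatorial data defining $f$ is already adapted to the cusps, so this bookkeeping should be routine, and one simply transports the structure to the $\bar M_k$. The linear growth of the number of index-$3$ handles is unavoidable and is exactly what the corollary refrains from claiming.
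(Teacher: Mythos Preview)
Your argument is correct and is exactly the one the paper intends: the corollary is stated as an analogue of \cite{BattistaHyperbolic4manifolds}*{Corollary~3} and the paper gives no separate proof, so the cyclic-cover-and-cut-along-a-regular-fibre argument you outline is precisely what is meant. Your handle count is right, and the paper's \cref{sec:cusps} (specifically \cref{res:fibering-boundary,res:perfect-morse-on-extended-cubulation}) supplies the compactness input you flag, namely that $\extf$ restricts to a genuine fibration on $\partial\truncmfld$, so the regular fibre $N$ of $\restrict\extf\truncmfld$ is a fixed compact $5$-manifold with boundary.

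One minor comment: your worry that ``passing to the covers $M_k$ does not multiply the number of cusp cross-sections without bound'' is a red herring for this argument. The handle decomposition you build uses only the fixed handle structure of the single regular fibre $N$, plus the $kd$ index-$3$ handles, plus the closing-up handles; the number of boundary components of $\bar M_k$ never enters the count. (It is in any case bounded by the number of components of $\partial N$, since $\partial\bar M_k$ is a mapping torus of $\partial N$, but you do not need this.) Everything else is exactly as you say.
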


\subsection*{Perfect Morse functions}
Let $M$ be an even-dimensional hyperbolic manifold. Given a Morse function $f \colon M \to S^1$, the alternating sum of the number of critical points of each index yields the Euler characteristic $\chi(M)$. It follows that the number of critical points for such a function is at least $\abs{\chi(M)}$. In particular, a necessary requirement for $f$ to be a fibration is that the Euler characteristic of $M$ vanishes.

However, the Chern-Gauss-Bonnet formula implies that the Euler characteristic of $M$ is proportional to its volume and never zero; therefore $M$ cannot fibre over the circle. The natural generalisation of the fibring problem in this setting is whether $M$ admits a Morse function with a minimal number of critical points.

\begin{definition}
	A Morse function $f\colon M \to S^1$ is said to be \newterm{perfect} if it has exactly $\abs{\chi(M)}$ critical points.
\end{definition}
In the odd-dimensional case $\chi(M)=0$, so perfect Morse functions are actually fibrations.
Perfect circle-valued Morse functions have already been studied in this context by Battista and Martelli \cite{BattistaHyperbolic4manifolds}. In particular, they constructed several hyperbolic $4$-manifolds, both closed and cusped, that admit such a function.

\subsection*{Finiteness properties of groups}
The manifold $M^6$ can be used to construct subgroups of hyperbolic groups with exotic finiteness properties.

Recall that a group is said to be of type \finitetype[n] if it admits a classifying space with finite $n$-skeleton, and of type $ \finitetype $ if the classifying space has finitely many cells. This generalizes classical notions of finiteness, as being of type \finitetype[1] is equivalent to being finitely generated, and \finitetype[2] to finitely presented. Hyperbolic groups are of type \finitetype[n] for every $n$.

A natural question is under which conditions a subgroup of a hyperbolic group is necessarily hyperbolic itself. Rips \cite{RipsF1notF2} in 1982 presented a construction that produces many examples of finitely generated subgroups of hyperbolic groups that are not finitely presented, and therefore not hyperbolic; finding finitely presented counterexamples turns out to be a much harder task. Except for the group built in \cite{IMMHyperbolic5Manifold}, which is of type $\finitetype$, all the known examples rely on the failure of some higher finiteness property to obstruct hyperbolicity. %

A strategy for constructing a subgroup of a hyperbolic that is finitely presented but not \finitetype[3] was first proposed by Gromov \cite{GromovHyperbolicGroups}. In fact, Bestvina noticed that the ambient group could not be hyperbolic.

Later Brady \cite{BradyBranchedCoverings}, building on Gromov's idea, managed to construct a working example. This remained the only known finitely presented, non-hyperbolic subgroup of a hyperbolic group until much more recently, with constructions by Lodha \cite{LodhaF2notF3} and Kropholler \cite{KrophollerF2notF3}.
An example of a subgroup of type $\finitetype[3]$ not $ \finitetype[4] $ was found by Llosa Isenrich, Martelli and Py \cite{LlosaPyMartelli} starting from the fundamental group of the hyperbolic $8$-manifold constructed in \cite{IMMAlgebraicFibering}; more generally, Llosa Isenrich and Py \cite{LlosaPy} constructed subgroups that are type $ \finitetype[n] $ not $ \finitetype[n+1] $, using methods from complex hyperbolic geometry.

As consequence of \cref{thm:M6}, we construct a finitely presented subgroup of a hyperbolic group that is not $ \finitetype[3] $, that adds to this now growing class of examples.
To the best of our knowledge this is the first example of this class that is produced using real hyperbolic geometry.

\subsection*{Main techniques}
Jankiewicz, Norin, and Wise \cite{JankiewiczNorinWise} described an algorithm, starting from a so-called \newterm{system of moves} and an \newterm{initial state}, that produces a cube complex $ \cubulation $ with a Bestvina-Brady Morse function $ f \colon \cubulation \to S^1 $.
When certain combinatorial conditions are satisfied, then $f$ has connected ascending and descending links and by \cite{BestvinaBradyMorseTheory} it is an algebraic fibring.

Getting a topological fibration, or more generally a perfect Morse function, is a much harder task. A fibration of a hyperbolic $5$--manifold was eventually found in \cite{IMMHyperbolic5Manifold}, but the authors needed to relax the condition on the moves, defining a notion of \emph{set of moves} that allows some degeneracy on certain codimension--$2$ faces, called \newterm{bad ridges}. This degeneracy was controlled by choosing a suitable subdivision of the cube complex, and it allowed to construct a function with \emph{collapsible} ascending and descending links.

In this paper, we generalise this by introducing the notions of \emph{bad} and \emph{good} faces of a polytope (see \cref{def:good-polytope}), that depend only on the set of moves.
Since bad faces can now appear in every codimension, we are only able to define the Bestvina-Brady Morse function on the barycentric subdivision of $ \cubulation $.
This has the drawback that we introduce many more vertices for which we have to check ascending and descending links. We solve this issue with \cref{thm:inherited-state-collapsibility}, where we give a combinatorial condition to check their collapsibility.

We then apply this general strategy to the polytope $P^6$, which belongs to a remarkable family of polytopes introduced in \cite{AgolLongReid} and studied in \cite{PotyagailoVinberg}. Since we are in even dimension, we cannot get a fibration, so some critical points are expected. Indeed, we get some ascending and descending links that are not collapsible, but they collapse to a sphere. After taking care of the cusps, a smoothing argument leads to the proof of \cref{thm:M6}.

\subsection*{Structure of the paper}

The paper is structured as follows.

\begin{itemize}
	\item In \cref{sec:preliminaries} we recall the construction described in \cite{JankiewiczNorinWise}, while introducing \emph{good} and \emph{bad} faces, and the notion of inherited state.
	\item In \cref{sec:bary-sub} we show how to construct a Bestvina-Brady Morse function $ \baryf \colon \sd \cubulation \to S^1 $ on the barycentric subdivision $ \sd C $ of the cubulation $ \cubulation $ associated with a hyperbolic polytope $P$, such that the ascending and descending links of $\baryf$ are collapsible at the barycentres of good faces, and are controlled by the inherited state at the barycentres of bad ones.
	\item In \cref{sec:P6} we describe the combinatorics of the polytope $P^6$, while focusing on a particular subgroup of its symmetries, and we choose a set of moves and an initial state. By using the results in \cref{sec:bary-sub} we construct a Bestvina-Brady Morse function $f$ with ascending and descending links that are either collapsible, or collapse to a $2$-sphere.
	\item In \cref{sec:cusps} we embed $C$ into another cubulation $ \extcubulation$ whose interior is diffeomorphic to $ \mfld $, and we extend $f$ to $ \extf \colon  \extcubulation \to S^1 $ while preserving the condition on ascending and descending links.
	\item In \cref{sec:smoothing} we show that we can smoothen $ \extf $ to a smooth perfect circle-valued Morse function, thus proving \cref{thm:M6}.
	\item In \cref{sec:finiteness} we show how to use \cref{thm:M6} to construct a finitely presented subgroup of a hyperbolic group that is not of type $ \finitetype[3] $.
	\item We conclude with some questions in \cref{sec:comments}.
\end{itemize}

\subsection*{Acknowledgments}
The authors would like to thank Bruno Martelli for introducing them to the problem and for many interesting and useful discussions on the topic, and Claudio Llosa Isenrich for directing them towards the application described in \cref{sec:finiteness}, and for his many helpful comments.

The first author gratefully acknowledges support from the Royal Society through the Newton International Fellowship (award number: NIF\textbackslash R1\textbackslash 231857).
The second author gratefully acknowledges funding by the DFG 281869850 (RTG 2229).
Both authors are members of the INdAM GNSAGA research group.

\section{Preliminaries}\label{sec:preliminaries}

Jankiewicz, Norin and Wise described in \cite{JankiewiczNorinWise} a combinatorial game that, given a right-angled Coxeter group equipped with some combinatorial data, produces a $\CAT 0$ cube complex $\cubulation$ equipped with a piecewise-linear map $ f\colon \cubulation \to S^1 $. %
The aim of this section is to recall the notions of Bestvina-Brady Morse theory on which the game is based on, and then present the game in a slightly modified form, which can be seen as a generalisation of the procedure employed in \cite{IMMHyperbolic5Manifold}.

\subsection{Bestvina-Brady Morse theory}

Given a cell complex $X$, and two cells $ \sigma, \tau $ of $X$, we say that $ \sigma $ is a \newterm{face} of $ \tau $, or equivalently that $ \tau $ is a \newterm{coface} of $\sigma$, if $ \sigma \subseteq \tau $. We add the adjective \newterm{proper} if the inclusion is strict.

The following definitions are taken from \cite{BestvinaBradyMorseTheory}.
\begin{definition}
	An \newterm{affine cell complex} is a cell complex where every cell $ \sigma $ is equipped with a characteristic function $ \chi_\sigma \colon \sigma \to P_\sigma $, which is a homeomorphism to an affine polytope $ P_\sigma \subseteq \RR^d $, such that whenever $ \tau $ is a face of $ \sigma $, with inclusion map $ i \colon \tau \hookrightarrow \sigma $, then $ \chi_\sigma \circ i \circ \inv{\chi_\tau} $ is the restriction of an affine homeomorphism of $\RR^d$.
\end{definition}

Affine cell structure have a natural piecewise-linear structure, inherited by the affine structure on the cells.
Examples are simplicial complexes and cube complexes, where characteristic functions are identifications with the standard simplex or cube. Note that every subdivision of an affine cell complex is naturally an affine cell complex.

\begin{definition}
	Let $X$ be an affine cell complex, and let $ f\colon X \to S^1 $ be a map. We say that $f$ is a \newterm{circle-valued Bestvina-Brady Morse function} if for every positive-dimensional cell $ \sigma $ the restriction $ \restrict f \sigma $ is affine and nonconstant.
\end{definition}

In the definition above, \emph{affine} means that there is an affine map $F\colon \RR^d \to \RR $ such that $ f=\pi \circ F \circ \chi_\sigma $, where $ \pi\colon \RR \to S^1 $ is the standard covering map. Since we mostly work with circle-valued functions, we will drop the \emph{circle-valued} adjective, and use the term \newterm{real-valued Bestvina-Brady Morse function} when the codomain is $\RR$.

Bestvina-Brady Morse functions are the piecewise-linear analogue of their more widespread smooth counterpart. There is also a piecewise-linear notion that mimics the concept of regular and critical point; this is given by the \newterm{ascending and descending links}.

\begin{definition}
	Let $v$ be a vertex of an affine cell complex $X$, and let $ f \colon X \to S^1 $ be a Bestvina-Brady Morse function. The \newterm{ascending (descending) link} of $v$, denoted by $ \linkup[f] v $ (resp.~$ \linkdown[f] v $), is the subcomplex of $ \link v $ spanned by cells $ \sigma $ such that $v$ is the minimum (maximum) of any lift of $\restrict f \sigma  \colon \sigma \to S^1$ to a real-valued function.

	If the function $f$ is clear from the context, we just write $ \linkup v $ and $ \linkdown v $.
\end{definition}

Descending links control how the topology of sublevels of $f$ change when crossing a vertex, as explained in the following proposition.
We refer to \cite{RourkeSanderson} for the definition of \emph{collapsing}.

\begin{proposition}[\cite{BestvinaBradyMorseTheory}]\label{prop:bb-deformation-lemma}
	Suppose that $f\colon X \to \RR$ is a real-valued Bestvina-Brady Morse function on an affine cell complex $X$, and let $ a,b \in \RR $.

	If there are no vertices of $X$ with value in $(a, b]$, then $ X_{\leq b} $ collapses to $ X_{\leq a} $. If there is a single vertex $v$ with value in $ (a,b] $, then $X_{\leq b}$ collapses to $ X_{\leq a} $ with a cone attached over $ \linkdown v  $, where $ \linkdown v  $ embeds naturally inside $ f^{-1}(a) $.
\end{proposition}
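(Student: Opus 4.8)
This is the classical Bestvina--Brady deformation lemma, so I only sketch the plan. First I would record that for every $t$ the sublevel set $X_{\leq t} = f^{-1}((-\infty,t])$ is itself an affine cell complex, whose cells are the polytopal truncations $\sigma\cap f^{-1}((-\infty,t])$ of the cells $\sigma$ of $X$ (affine polytopes, since $f|_\sigma$ is affine), with $f^{-1}(t)$ a subcomplex; a simplicial subdivision, if one wants to work with simplicial collapses, changes nothing. So both statements assert that one affine cell complex collapses onto a subcomplex. I would then cut $X$ along $f^{-1}(a)$ and $f^{-1}(b)$ from the start, so that $X_{\leq a}$ and $X_{\leq b}$ become subcomplexes; writing $Z := f^{-1}([a,b])$ one has $X_{\leq b} = X_{\leq a}\cup Z$ with $Z\cap X_{\leq a} = f^{-1}(a) =: Z_a$. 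Since $Z$ is attached to $X_{\leq a}$ only along $Z_a$, both statements reduce to collapses of $Z$ onto subcomplexes containing $Z_a$.

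For the first statement no vertex of $X$ has value in $(a,b]$, so after the cut (and a simplicial subdivision) every simplex of $Z$ is a join $\rho_a * \rho_b$ of a possibly empty face $\rho_a$ on level $a$ and a possibly empty face $\rho_b$ on level $b$, and $Z_a$ is the subcomplex of those simplices with $\rho_b = \emptyset$. The claim becomes precisely that $Z$ collapses onto $Z_a$, which is the piecewise-linear shadow of the fact that $f|_Z\colon Z\to[a,b]$ has no critical values: I would remove the simplices meeting level $b$ by a sequence of elementary collapses, processed in order of decreasing dimension of the top part $\rho_b$, each simplex collapsed through a suitable free face obtained by deleting a vertex of its $\rho_b$-part. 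That would settle the first statement.

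For the second statement set $c = f(v)\in(a,b]$. Since $v$ is the only vertex of $X$ with value in $(a,b]$, the first statement applied above and below $c$ would reduce the problem to the single transition across $c$, i.e.\ to showing that $X_{\leq c}$ collapses onto $X_{\leq a}$ with a cone over $\linkdown v$ attached. I would localise at $v$: inside the closed star of $v$ the complex $f^{-1}([a,c])$ is the cone $v * K$ on the link $K$ of $v$ in it, where $K$ is assembled from the descending link $\linkdown v$, the horizontal part $\link v\cap f^{-1}(c)$, and the cells joining them, while away from the star of $v$ nothing has value in $(a,c]$, so that part collapses to level $a$ as before. Since $K$ collapses onto $\linkdown v$ --- the link-level incarnation of the same ``no critical values'' principle --- the cone $v * K$ collapses onto $v * \linkdown v$, and assembling this with the collapse of the complement leaves $X_{\leq a}$ with one cone over $\linkdown v$ attached; the subcomplex along which it is glued is the image of $\linkdown v$ under the collapse toward level $a$, which is the asserted natural embedding $\linkdown v \hookrightarrow f^{-1}(a)$.

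The conceptual input is modest: a polytope cut by an affine function, and a cone on a link, each retract visibly onto the relevant subcomplex. The real work --- and the step I expect to be the main obstacle --- is to turn these retractions into honest collapses: to pin down, at each stage, a free face, which forces a careful global ordering of the simplices of $Z$ (respectively of the star of $v$), and then to verify that the local collapse near $v$ is compatible along $f^{-1}(c)$ and $f^{-1}(a)$ with the collapse carried out on the rest of $X_{\leq b}$, so that the pieces fit together into a single collapse.
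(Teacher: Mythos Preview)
The paper does not supply its own proof of this proposition: it is stated with the citation \cite{BestvinaBradyMorseTheory} and used as a black box, so there is nothing to compare your argument against. Your sketch is a reasonable outline of the standard Bestvina--Brady argument (Morse Lemma in their paper), and the issues you flag at the end---ordering the collapses and ensuring compatibility of the local collapse near $v$ with the global one---are indeed the points where a full write-up requires care, but they are handled in the original reference and your plan is consistent with it.
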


We will see in \cref{sec:smoothing} that any vertex with collapsible (i.e.~that collapses to a point) descending link can be smoothened to a regular value for a smooth Morse function. On the other hand, a vertex with descending link that collapses to a $(k-1)$-sphere can be replaced with an index-$k$ critical point. This will be crucial for the proof of \cref{thm:M6}.

\subsection{Jankiewicz, Norin, and Wise's algorithm}

We start by fixing the terminology regarding the main objects of this paper, which are hyperbolic polytopes.

\begin{definition}
	A \newterm{hyperbolic $n$-polytope} $P$ is a finite volume intersection of finitely many hyperspaces of $ \HH^n $. A \newterm{face} of $P$ is either $P$ itself, or the non-empty intersection of $P$ with the boundary of a hyperspace containing it. A face of $P$ is \emph{proper} if it is different from $P$.

	We refer to faces of dimension $ n-1 $, $ n-2 $, $ 1 $, and $0$ as \newterm{facets}, \newterm{ridges}, \newterm{edges}, and \newterm{(finite) vertices} respectively. An \newterm{ideal vertex} is a point of $ \boundary\HH^n $ contained in the closure of $P$, and it is not considered a face of the polytope.
\end{definition}

\begin{remark}
	In contrast to the usual convention, we exclude the empty set from the faces of a polytope. On the other hand, it is convenient to consider the polytope as a face of itself.
\end{remark}

Let $\pol$ be a hyperbolic right-angled $n$-polytope, meaning that the dihedral angle between any two adjacent facets is $ \pi/2 $.
Let $\numfacets$ be the number of facets of $\pol$, denoted by $\facet[1], \dots, \facet[\numfacets]$ (after choosing an arbitrary ordering).

The group generated by the reflections along the faces of $\pol$ is a right-angled Coxeter group $\coxetergrp$ with $\numfacets$ generators. Its commutators subgroup $\commutators$ is a torsion-free subgroup of index $ 2^\numfacets $. 

So the quotient $ \quotient{\Hyp n}{\commutators} $ is a finite volume hyperbolic manifold $\mfld$, tessellated in $ 2^\numfacets $ copies of $\pol$. A concrete description of $\mfld$ is the following. For every $ v \in \ZZ[2]^\numfacets $ consider a copy of $\pol$ denoted by $ \pol[v] $; then we glue $ \pol[v] $ to $ \pol[v+e_i] $ along the facet $\facet[i]$ with the identity map, for all $v$ and for every $ i \in \range \numfacets$.
\begin{remark}
	The above construction, originally introduced by L\"obell \cite{Loebell}, coincides with the one described in \cites{BattistaHyperbolic4manifolds,IMMAlgebraicFibering,IMMHyperbolic5Manifold} in terms of colourings, in the particular case where we choose the ``inefficient colouring'' in which all facets have distinct colours. This is not restrictive, as the heavy work is done by the set of moves that we define below; changing the colouring yields virtually the same result.
\end{remark}

The dual of the tessellation is a cube complex $\cubulation$, since $\pol$ is right-angled; this can also be seen as the quotient of the Davis complex of $\coxetergrp$ by the action of $\commutators$ by left-multiplication.

When $\mfld$ is compact, then $\cubulation$ is homeomorphic to $\mfld$; otherwise, the cube complex $\cubulation$ is just a deformation retract of $\mfld$, also called a \newterm{spine}. We will see in \cref{sec:cusps} that $M$ admits a cube complex structure $ \extcubulation $ in which $\cubulation$ embeds nicely.

The aim is to define a map $ f\colon \mfld \to S^1 $, and we first define it on $\cubulation$. To do so, we need the following two combinatorial objects.

\begin{definition}
	A \newterm{set of moves} is any partition of the facets. The sets of the partition are called \newterm{moves}. If the facets belonging to each move are pairwise disjoint, the set of moves is called \newterm{sparse}.
\end{definition}

Similarly to \cite{IMMHyperbolic5Manifold}, we allow non-sparse set of moves, which is in contrast with the definition of \emph{system of moves} given in \cite{JankiewiczNorinWise}. This will be crucial for the concrete example we examine in this paper.

\begin{definition}
	A \newterm{state} $\state$ is any labelling of the facets of $P$ with $\In$ (for In) or $\Out$ (for Out). The label on a facet $\facet$ is called the \newterm{status} of $\facet$ and is denoted by $\status F$.
\end{definition}

A state induces a bipartition on the vertices of the polytope $\dualpol$ dual to $\pol$. Since $\pol$ is right-angled and therefore simple, then $\dualpol$ is a simplicial complex. We consider the full subcomplexes whose vertices are the vertices of $\dualpol$ with label $\Out$ and $\In$; denote these subcomplexes by $ \stateout, \statein $ respectively.

\begin{definition}
	A state is:
	\begin{itemize}
		\item \newterm{legal} if both $\stateout$ and $\statein$ are connected;
		\item \newterm{$m$-legal} if both $\stateout$ and $\statein$ are $m$-connected;
		\item \newterm{totally legal} if both $\stateout$ and $\statein$ are collapsible.
	\end{itemize}
\end{definition}

Moves act on states naturally: if $\move$ is a move, then $\move$ acts by inverting the status of all the facets belonging to $\move$, while preserving the status of all the others.

The objective is to define a state on all copies $\pol[v]$. To do so, we start by fixing a state $\state[0]$ on $\pol[0]$, called the \newterm{initial state}. We then define an action of $\vectorspace$ on the states, by saying that the element of the basis $e_i$ acts by applying on each state the move that contains the facet $\facet[i]$. Since the actions of moves commute with each other, this defines an action of the whole $\vectorspace$.
We use this action to propagate the initial state $\state[0]$ of the polytope $\pol[0]$ to all other polytopes of the tessellation; to do so, we assign to the polytope $\pol[v]$ the state $ \state[v] = v \cdot \state[0] $.

In particular, the state $ \state[v+e_i] $ is obtained from $ \state[v] $ by acting with the move containing $\facet[i]$. This can be summarized with the slogan:

\mdfsetup{skipabove=10pt,skipbelow=10pt,roundcorner=10pt}
\begin{mdframed}
	\begin{center}
		\textit{whenever we cross a facet $\facet$, \\ we invert the status of $\facet$ and all other facets that are in the same move as $\facet$}.
	\end{center}
\end{mdframed}

We use the states on the polytopes to define an orientation on each edge of the cube complex $\cubulation$ that is dual to the tessellation in copies of $P$.
Every vertex of the dual cube complex $\cubulation$ is dual to a polytope of the tessellation. We use the same letter $v$ for both the vectors of $\vectorspace$ and the vertices of $\cubulation$, since there is a natural correspondence between them. So for $ v \in \vectorspace $ we also denote by $v\in C$ the vertex dual to the polytope $\pol[v]$.

Let $\facet[i]$ be a facet, and let $e$ be the edge connecting $v$ with $v+e_i$. If the status $\status[v]{\facet[i]}$ is \Out, we orient $e$ from $v$ to $ v+e_i $; otherwise we put the opposite orientation.
Note that the status $\status[v]{\facet[i]}$ of the facet $\facet[i]$ inside $\pol[v]$ is always the opposite of the status $\status[v+e_i]{\facet[i]}$ of the facet $\facet[i]$ inside $\pol[v+e_i]$. This simple fact follows from how the state propagation law is defined, and guarantees that the orientation on every edge of $\cubulation$ is well-defined.

Denote with $ \skel1\cubulation $ the $1$-skeleton of $\cubulation$. We define a map $ \foneskel \colon \skel1\cubulation \to S^1 $ as follows: every oriented edge is canonically identified with the interval $[0,1]$, which we map to the circle via the usual quotient $ \quotient\RR\ZZ=S^1 $. We would like to extend this map to the whole $\cubulation$.

Note that choosing an orientation for every edge naturally produces a cellular $1$-cochain, which maps every oriented edge to either $ 1 $ or $-1$, depending on whether the orientations coincide. To admit a continuous extension to $\cubulation$, we need this $1$-cochain to be a cocycle, meaning that every square should have two edges oriented clockwise and two oriented counter-clockwise.

However, for our purpose we also need the map to be piecewise linear. This motivates the following definition, given in \cite{JankiewiczNorinWise}.

\begin{definition}
	A square of $\cubulation$ is \newterm{coherent} if parallel edges are oriented in the same direction.
\end{definition}

\newboolean{bad}
\newboolean{terrible}
\begin{figure}
	\centering
	\setboolean{bad}{false}
	\setboolean{terrible}{false}
	\begin{subfigure}{0.3\textwidth}
		\includestandalone[height=4cm]{pictures/square-orientations}
	\end{subfigure}
	\setboolean{bad}{true}
	\begin{subfigure}{0.3\textwidth}
		\includestandalone[height=4cm]{pictures/square-orientations}
	\end{subfigure}
	\setboolean{terrible}{true}
	\begin{subfigure}{0.3\textwidth}
		\includestandalone[height=4cm]{pictures/square-orientations}
	\end{subfigure}
	\caption{The two possible orientations on a square producing a $1$-cocycle, up to rotation.} %
	\label{fig:square-orientations}
\end{figure}

If the set of moves is sparse, then every square is coherent, and we can extend $\foneskel$ on every cube via a diagonal map, as done in \cite{JankiewiczNorinWise}. Here, however, the situation is more complicated.

Let $\pol[v]$ be a copy of $\pol$, and let $\facet[i]$ and $\facet[j]$ be two adjacent facets. Their intersection is dual to a square, as in \cref{fig:square-orientations}. We have three cases:

\begin{itemize}
	\item If they belong to different moves, then the square is coherent, as in \cref{fig:square-orientations}, left.
	\item If they belong to the same move, and they have the same status, then the square is as in \cref{fig:square-orientations}, centre (up to a rotation).
	\item If they belong to the same move, and they have different status, then we get a square as in \cref{fig:square-orientations}, right.
\end{itemize}

Note that if we have squares of the third kind, then the $1$-cochain is not a $1$-cocycle, so there is no hope to extend $\foneskel$ to the whole $\cubulation$. The following definition ensures that we avoid this case.

\begin{definition}
	A state $s$ is \newterm{compatible} with a set of moves if whenever $ F_i $ and $F_j$ are adjacent and belong to the same move, then $ s(F_i)=s(F_j) $. %
\end{definition}
\begin{remark}
	If a state $s$ is compatible, than the whole orbit under the actions of $\vectorspace$ via the moves is made of compatible states. This ensures that it is sufficient to check compatibility of the initial state.
\end{remark}

From the analysis above, we get the following simple consequence.

\begin{lemma}
	The map $ \foneskel \colon \skel1 \cubulation \to S^1 $ extends continuously to $\cubulation$ if and only if the initial state is compatible.
\end{lemma}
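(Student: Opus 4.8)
The plan is to reduce extendability over $\cubulation$ to extendability over the $2$-skeleton, to settle each square separately by means of the trichotomy recorded immediately before the statement, and then to translate the resulting condition on all the propagated states into a condition on the initial state via the Remark on compatibility.

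First I would note that the reduction to $\skel{2}{\cubulation}$ is formal, because a continuous map into $S^1$ extends across any cell of dimension $k\ge 3$: if $\foneskel$ has already been extended to a map $g\colon\skel{2}{\cubulation}\to S^1$ and $\cube$ is a $k$-cube with $k\ge 3$, then the composition of $g$ with the attaching map $\sphere{k-1}\to\skel{2}{\cubulation}$ of $\cube$ is null-homotopic since $\pi_{k-1}(S^1)=0$, hence extends over $\cube$; iterating over the skeleta of $\cubulation$ yields an extension to all of $\cubulation$, while conversely any extension to $\cubulation$ restricts to one over $\skel{2}{\cubulation}$. A square $\sigma$ is a disc on whose boundary $\foneskel$ is already prescribed, so $\foneskel$ extends over $\sigma$ exactly when the loop $\restrict{\foneskel}{\boundary\sigma}\colon\sphere{1}\to S^1$ is null-homotopic, i.e.\ has winding number $0$; and extensions chosen over distinct squares never clash, as two squares meet only along edges and vertices, where $\foneskel$ is fixed. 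Hence $\foneskel$ extends continuously to $\cubulation$ if and only if $\restrict{\foneskel}{\boundary\sigma}$ has winding number $0$ for every square $\sigma$.

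Next I would compute this winding number in each case. A square $\sigma$ is dual to a ridge $\facet[i]\cap\facet[j]$ of some copy $\pol[v]$, and \cref{fig:square-orientations} records how the state-propagation rule orients its four edges. In the first two cases — $\facet[i]$ and $\facet[j]$ belonging to different moves, or to the same move with equal status in $\pol[v]$ — exactly two of the four edges run with the boundary cycle of $\sigma$ and two run against it, so the winding number is $0$ and $\foneskel$ extends over $\sigma$. In the third case — $\facet[i]$ and $\facet[j]$ in the same move but with opposite statuses — the four edges form a directed cycle around $\boundary\sigma$, so the winding number is $\pm 4\ne 0$ and $\foneskel$ does not extend over $\sigma$. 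Therefore $\foneskel$ extends over every square if and only if no square is of the third kind, that is, if and only if for every $v$ the state $\state[v]$ is compatible with the set of moves.

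Finally, the Remark following the definition of compatibility gives that the $\vectorspace$-orbit of a compatible state consists of compatible states; since $\state[v]=v\cdot\state[0]$, all the states $\state[v]$ are compatible if and only if $\state[0]$ is. Concatenating the three equivalences proves the lemma. The only step involving any real work is the winding-number count in the third paragraph — checking that the edge orientations produced by the propagation law combine with the orientation of $\boundary\sigma$ exactly as shown in \cref{fig:square-orientations} — and I do not expect it to present a genuine obstacle.
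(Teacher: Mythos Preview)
Your proof is correct and follows precisely the approach the paper sets up: the paper does not actually write out a proof of this lemma, presenting it instead as a ``simple consequence'' of the preceding discussion (the $1$-cochain/cocycle observation, the trichotomy for squares in \cref{fig:square-orientations}, and the Remark on propagation of compatibility), and your argument just makes those steps explicit via the obstruction-theoretic reduction to the $2$-skeleton and the winding-number count.
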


However, to extend it to a Bestvina-Brady Morse function, we need a subdivision of the cubes. This had already been done in \cite{IMMHyperbolic5Manifold}, by subdividing the squares that are not coherent into four triangles: since here the situation is more complicated, we will work more abstractly and perform a full barycentric subdivision. We will explain in \cref{sec:bary-sub} how to define a Bestvina-Brady Morse function on this subdivision.

\subsection{Bad faces and state inheritance}\label{sub:state-inheritance}

After performing the subdivision, a new vertex appears at the barycentre of every cell of the tessellation. To study the ascending and descending links at these new vertices, we need to introduce two definitions of combinatorial nature on the faces of $ \pol $: the notion of \emph{good} and \emph{bad} face, and the \emph{inherited state}.

\begin{definition}
	Let $\face$ be a face of $\pol$ of codimension $ \ell $. We call the facets of $ \pol $ that contain $ \face $ the \newterm{defining facets of $ \face $}.
\end{definition}

Note that $\face$ is the intersection of all its defining facets. Let $ \face $ be a face of $P$, with defining facets $ \facet[i_1], \dots, \facet[i_\ell] $.

\begin{definition}\label{def:good-polytope}
	We call the face $\face$ \newterm{good} if there is a move that contains exactly one of its defining facets. We call it \newterm{bad} otherwise.
\end{definition}

A state on the polytope $\pol$ induces states on all its faces, as follows.

\begin{definition}\label{def:inherited-state}
	Let $ \pol $ be a polytope equipped with a state $\state$, and $\face$ be a face. For a facet $ \varfacet $ of $ \face $, consider the facet $ \facet $ of $P$ such that $ \varfacet = \facet \cap \face $.

	The \newterm{inherited state} on $ \face $ assigns to $ \varfacet $ a status as follows:

	\begin{itemize}
		\item it assigns $\status\facet $ if $\facet$ is in a different move than all the defining facets of $ \face $;
		\item it assigns status $\Out$ otherwise, independently of the status of $ \facet $.
	\end{itemize}
\end{definition}

Even though this definition could seem arbitrary, the reason for which we need to sometimes assign status $\Out$ will become apparent in \cref{sec:P6}.

\begin{remark}\label{rem:p-bad}
	If $\face = \pol$, then the inherited state coincides with the usual state. Moreover, $ \pol $ is a bad face of itself (since the set of defining facets is empty).
\end{remark}

For now, we considered $\face$ to be a face of an abstract copy of the polytope $\pol$; there are however several copies of $\face$ inside the tessellation of $\mfld$. Given a certain copy of $\face$, we can choose some copy $\pol[v]$ of $\pol$ that contains it. In particular, this copy of $\face$ is the intersection of all the copies of $P$ of the form $\pol[v+w]$, where $ w \in \generatedby{e_{i_1}, \dots, e_{i_\ell}} $. There is therefore a natural correspondence between copies of $\face$ and vectors of $ \quotient{\vectorspace*}{\generatedby{e_{i_1}, \dots, e_{i_\ell}}} $, so we will denote a copy of $\face$ by $ \face[v] $, where $[v]$ is the projection to the quotient of $ v \in \vectorspace $.

We assign to $ \face[v] $ the inherited state from $ \pol[v] $.

\begin{lemma}
	The inherited state $ \inheritedstate[v] $ is well-defined, that is, if $ v,w $ are two vectors that project to the same element $ [v]=[w] $ in the quotient, then $\inheritedstate[v]=\inheritedstate[w]$.
\end{lemma}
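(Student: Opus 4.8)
The plan is to reduce the well-definedness to a single elementary move, and then read off the conclusion directly from \cref{def:inherited-state}.

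First I would note that if $[v]=[w]$ in $\quotient{\vectorspace}{\generatedby{e_{i_1}, \dots, e_{i_\ell}}}$, then $w-v$ is a sum of some of the standard generators $e_{i_1},\dots,e_{i_\ell}$ attached to the defining facets of $\face$. Adding these generators one at a time gives a chain $v = u_0, u_1, \dots, u_m = w$ in which each $u_{j+1}$ differs from $u_j$ by a single $e_{i_k}$ with $\facet[i_k]$ a defining facet of $\face$. Since ``the inherited state on $\face$ computed from $\pol[u_j]$ equals the one computed from $\pol[u_{j+1}]$'' is a transitive relation, it suffices to prove: for every $u \in \vectorspace$ and every defining facet $\facet[i_k]$ of $\face$, the inherited states on $\face$ induced by $\pol[u]$ and by $\pol[u+e_{i_k}]$ agree.

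To see this, recall that $\state[u+e_{i_k}]$ is obtained from $\state[u]$ by applying the move $\move$ containing $\facet[i_k]$, which inverts precisely the statuses of the facets lying in $\move$ and fixes all the others. Fix a facet $\varfacet$ of $\face$ and write $\varfacet = \facet \cap \face$. Which clause of \cref{def:inherited-state} governs $\varfacet$ depends only on the set of moves and not on the state, hence is the same for $\state[u]$ and $\state[u+e_{i_k}]$. If $\facet$ lies in a move different from all defining facets of $\face$, then in particular $\facet \notin \move$ (because $\facet[i_k]\in\move$ is a defining facet), so $\move$ leaves $\status\facet$ unchanged, and both inherited states assign to $\varfacet$ the common value $\status\facet$. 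If instead $\facet$ shares a move with some defining facet of $\face$, both inherited states assign $\Out$ to $\varfacet$, regardless of $\status\facet$. In either case the two inherited states coincide on $\varfacet$, and since $\varfacet$ was arbitrary, they coincide.

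I do not expect a genuine obstacle here. The only points deserving care are the two just highlighted: that the dichotomy in \cref{def:inherited-state} is \emph{intrinsic} to the set of moves, so that the case split is stable under the action of $\vectorspace$; and that having $\facet[i_k]$ among the defining facets of $\face$ is exactly what forces $\facet \notin \move$ in the first case, which is what makes the status of $\facet$ survive unchanged.
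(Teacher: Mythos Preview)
Your proof is correct and follows essentially the same approach as the paper: both split on the two clauses of \cref{def:inherited-state}, observing that the $\Out$ clause is state-independent, and that in the other clause the relevant facet $\facet$ lies outside every move containing a defining facet, so its status is unchanged when passing from $v$ to $w$. Your only difference is the explicit reduction to a single step $u \to u + e_{i_k}$ via a chain, whereas the paper handles the whole difference $w-v$ at once; this is cosmetic.
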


\begin{proof}
	Consider a facet $ \varfacet = \face[v] \cap \facet $ of $\pol[v]$. If $\facet$ belongs to a move that contains one of the $\facet[i_j]$, then $ \inheritedstatus[v]\varfacet = \inheritedstatus[v]\varfacet = \Out $. This is independent of $v$, since it is defined using only data from the abstract polytope $P$.

	Otherwise, suppose that $\facet$ does not belong to the same move as any of the defining facets $\facet[i_1], \dots, \facet[i_\ell]$. In this case, the action of the defining facets does not change the status of $F$. Since $w$ can be obtained from $v$ by adding some $ e_{i_j} $, and doing so does not change the status of $\facet$ by what we just assumed, we get that $ \status[v]\facet= \status[w]\facet $, and therefore by definition $ \inheritedstatus[v]\varfacet = \status[v]\facet = \status[w]\facet = \inheritedstatus[w]\varfacet $.
\end{proof}

Note that the lemma above would not hold if we had defined the inherited state just by looking at the status of the corresponding facets of $\pol$, without taking the moves into account.

\section{The barycentric subdivision}\label{sec:bary-sub}

Let $\pol$ be a hyperbolic right-angled polytope, equipped with a set of moves and a compatible initial state. By using the Löbell construction described in the previous section, we obtain a manifold $ \mfld $ tessellated into copies $ \pol[v] $ of $ \pol $, where $ v \in \vectorspace $. Dual to the tessellation of $ \mfld $, we have a cube complex $ \cubulation $.

Given any cell complex $X$, denote by $ \sd X $ the barycentric subdivision of $X$.

The aim of this section is to prove the following theorem.

\begin{theorem} \label{thm:inherited-state-collapsibility}
	Let $\pol$ and $ \cubulation $ as above. There exists a Bestvina-Brady Morse function $ f \colon \sd \cubulation \to S^1  $
	such that the following holds for any copy $\face[v]$ of some face $\face$ of $\pol$.

	\begin{itemize}
		\item If the inherited state on $ \face[v] $ is $ m $-legal for some $ m \in \NN $, then the ascending and descending link of $f$ at the barycentre of $\face$ are $m$-connected.
		\item If either $ \face  $ is good \emph{or} the inherited state is $ \infty $-legal, the ascending and descending links are collapsible.
	\end{itemize}
\end{theorem}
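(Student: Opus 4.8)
The plan is to construct $f$ on $\sd\cubulation$ explicitly from the propagated states and then to read off its ascending and descending links at every vertex of $\sd\cubulation$. The vertices of $\sd\cubulation$ that are not already vertices of $\cubulation$ are exactly the barycentres of the positive-dimensional cubes, and a cube of $\cubulation$ is precisely a copy $\face[v]$ of a face $\face$ of $\pol$; the case $\face=\pol$ is the situation treated in \cite{JankiewiczNorinWise}, so the real content is at the new barycentres.

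\textbf{Constructing $f$.} First I would orient the edges of $\cubulation$ by the states; compatibility of the initial state makes this a cellular $1$-cocycle, so on each contractible cube $\cube$ it is the coboundary of a height function $h_\cube\colon\cube^{(0)}\to\ZZ$, unique up to a constant and agreeing up to integer constants on shared faces. One then defines $f$ on $\sd\cubulation$ by assigning to the barycentre of each face $\varcube$ of each cube $\cube$ a value close to the average of $h_\cube$ over the vertices of $\varcube$, corrected by a sufficiently small term that is \emph{strictly increasing in $\dim\varcube$}; this descends modulo $\ZZ$ to a well-defined map to $S^1$, and one checks it is affine and nonconstant on every simplex of $\sd\cubulation$, hence a Bestvina--Brady Morse function. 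The dimension-increasing correction is essential: without it $f$ is constant on some simplices of $\sd\cube$ whenever $\cube$ is not coherent, and — as will be used below — it also resolves the ``ties'' created by same-move facets in the direction forced by the $\Out$-clause of \cref{def:inherited-state}.

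\textbf{Links as joins.} Fix $\cube$ dual to $\face[v]$ with $\face$ of codimension $\ell$, and set $b=\barycentre\cube$. Splitting a chain of cubes through $\cube$ into its part below $\cube$ and its part above gives
\[
	\link_{\sd\cubulation}(b)\;=\;\sd(\boundary\cube)\,*\,\link_\cubulation\cube,
\]
where $\sd(\boundary\cube)$ triangulates the $(\ell-1)$-sphere and $\link_\cubulation\cube$ is the order complex of the proper cofaces of $\cube$; the latter is combinatorially the barycentric subdivision of the boundary of the dual of $\face$, its vertices indexed by the proper faces of $\face$, with the facets of $\face$ as the original ones. Since the descending link of a Bestvina--Brady function at a vertex is a \emph{full} subcomplex of the link (if $b$ is maximal on every edge out of it, it is maximal on every simplex spanned by such edges), in the above join it is the join of the descending parts of the two factors, and likewise for ascending:
\[
	\linkdown b = D_\downarrow * U_\downarrow,\qquad \linkup b = D_\uparrow * U_\uparrow,
\]
with $D_{\downarrow/\uparrow}$ the descending/ascending link of $b$ inside $\sd\cube$ and $U_{\downarrow/\uparrow}\subseteq\link_\cubulation\cube$ its descending/ascending part; moreover $D_{\downarrow/\uparrow}\neq\emptyset$ since $b$ is never an extremum of $f|_\cube$, except when $\ell=0$ and $\sd(\boundary\cube)=\emptyset$, in which case $\linkdown b=U_\downarrow$.

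\textbf{Analysing the factors, and concluding.} For the downward factor I would prove that $\face$ is good exactly when some defining facet of $\face$ is the unique defining facet in its move, equivalently when $\cube$ has a coherent direction, and that a cube with a coherent direction has $D_\downarrow$ and $D_\uparrow$ collapsible, by an explicit collapse of $\sd\cube$ along that direction (as in \cite{IMMHyperbolic5Manifold}). For the upward factor I would compute the sign of the change of $f$ on crossing $\cube$ in the direction of a facet $\varfacet=\facet\cap\face$: if $\facet$ lies in a move disjoint from the defining facets of $\face$ the relevant edges are coherent with sign equal to $\inheritedstatus[v]{\varfacet}$, and if $\facet$ lies in the same move as a defining facet they flip across that direction, their heights cancel, and the dimension-increasing correction pushes the crossing \emph{upward} — which is exactly why such a facet is declared $\Out$. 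Tracking this over all proper faces of $\face$ shows that $U_\downarrow$ and $U_\uparrow$ are governed by the subcomplexes $(\inheritedstate[v])_\In$ and $(\inheritedstate[v])_\Out$ of the inherited state: they equal $\sd\bigl((\inheritedstate[v])_\In\bigr)$ and $\sd\bigl((\inheritedstate[v])_\Out\bigr)$ in the sparse directions, and in general collapse onto them, the extra cells being precisely those indexed by faces of $\face$ that meet a same-move facet. Pinning this down — in particular showing the extra cells can be collapsed away, so that $U_{\downarrow/\uparrow}$ really inherits collapsibility and not only contractibility — is the step I expect to be hardest. Granting it, the theorem follows from standard facts: subdivisions and these collapses preserve homotopy type; a join with a collapsible complex is collapsible; and $X*Y$ is $(m{+}1)$-connected, in particular $m$-connected, when $X\neq\emptyset$ and $Y$ is $m$-connected. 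So if $\inheritedstate[v]$ is $m$-legal then $U_\downarrow,U_\uparrow$ are $m$-connected and hence so are $\linkdown b=D_\downarrow*U_\downarrow$ and $\linkup b=D_\uparrow*U_\uparrow$; if $\inheritedstate[v]$ is $\infty$-legal then $U_\downarrow,U_\uparrow$ are collapsible and hence so are the links; and if $\face$ is good then $D_\downarrow,D_\uparrow$ are collapsible and hence so are the links, irrespective of the inherited state.
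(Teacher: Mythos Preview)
Your overall architecture matches the paper's: decompose $\link_{\sd\cubulation}(\barycentre\cube)$ as the join of a face link $\flink\cube=\sd(\partial\cube)$ and a coface link $\coflink\cube$, split the ascending and descending links accordingly, control the face part via goodness and the coface part via the inherited state, and conclude using that a join with a collapsible factor is collapsible. Your identification of the $\Out$-clause in \cref{def:inherited-state} with the tie-breaking dimension correction is also exactly the right intuition.

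The substantive difference is your choice of $f$: you set $f(\barycentre{Q'})$ to the \emph{average} of the height over the vertices of $Q'$, plus a small term increasing in $\dim Q'$, whereas the paper takes the \emph{minimum}, $\tilde f(\barycentre{Q'})=\min_{\partial Q'}\tilde f+\dim Q'\cdot\epsilon$. This asymmetric choice is the point of the construction, and your symmetric one breaks two things you rely on. First, with the minimum one has $\barycentre{Q'}\in\flinkdown\cube$ iff $Q'$ contains a global minimum of $h$ on $\cube$; this factors over the monochromatic decomposition $\cube=\cube[1]\times\cdots\times\cube[\ell]$ (\cref{sum-decomposition}) and identifies $\flinkdown\cube$ with a subdivision of the join of the $\flinkdown{\cube[i]}$ (\cref{res:face-link-product}), so a single $1$-dimensional factor forces collapsibility. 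With averages the criterion becomes ``the sum of the factorwise deviations is $\le 0$'', which mixes the factors and does not yield a join; your ``explicit collapse along the coherent direction'' would need its own argument.

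Second, and more damaging, the minimum gives the clean coface criterion of \cref{prop:visual-link}: $\barycentre{Q'}\in\coflinkup\cube$ iff \emph{every} cofacet of $\cube$ inside $Q'$ is ascending. Combined with the cofacet computation (your paragraph, which is \cref{res:state-inheritance}), this identifies $\coflinkup\cube$ \emph{exactly} with $\sd(\dualface_\Out)$ and $\coflinkdown\cube$ with the regular neighbourhood of $\dualface_\In$ (\cref{res:legal-inherited-collapsible-coface-link}). With averages this already fails in the fully sparse case: at an original vertex $v$, your $f(\barycentre{Q'})-f(v)$ equals $\tfrac12(\#\Out-\#\In)$ over the defining facets of $Q'$ plus the correction, so $\barycentre{Q'}$ is ascending whenever a \emph{majority} of the vertices of the corresponding simplex are $\Out$, not only when all of them are. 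Thus your assertion that $U_\uparrow$ and $U_\downarrow$ ``equal $\sd((\inheritedstate[v])_{\Out/\In})$ in the sparse directions'' is false for your $f$; the ``hardest step'' you flag, collapsing the extras away, is not a residual technicality but the entire content, and it is not clear it can be done in general. Replacing the average by the minimum removes both problems, and then your outline goes through essentially as in the paper.
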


\begin{remark}
	Note that if we only consider sparse set of moves, we get the original theory of Jankiewicz--Norin--Wise: all the proper faces of $P$ are good, so the vertices of $ \sd C $ that were introduced by the barycentric subdivision have collapsible ascending and descending link. A vertex $v$ of the original cubulation corresponds dually to a copy $\pol[v]$ of $\pol$, so the ascending and descending links are determined by the state on $\pol[v]$.
\end{remark}

In the previous section, we used the states to build a Bestvina-Brady Morse function $ \foneskel \colon \skel1\cubulation \to S^1 $ defined on the $1$-skeleton of $ \cubulation $.
Starting from $\foneskel$, we define a Bestvina-Brady Morse function $ \baryf \colon \sd\cubulation \to S^1 $ on the barycentric subdivision of $ \cubulation $.

Fix a small $ \epsilon > 0 $. We start by letting $ \baryf(v) = 0 $ for every vertex $ v \in \sd\cubulation $ that comes from the original cubulation. Then we identify each oriented edge with $ [0,1] $, where the barycentre corresponds to $ \frac12 $, and we define $f$ on that edge via the map $ [0,1] \mapsto [0,1]/\sim $ that sends $ [0,\frac12] $ to $ [0,\epsilon] $ and $ [\frac12, 1] $ to $ [\epsilon,1] $ linearly.

Suppose that we have already defined $\baryf$ on the barycentric subdivision of the $(k-1)$-skeleton $ \skel{k-1}{\cubulation} $ of $\cubulation$. Let $\cube$ be a $k$-cube of $\cubulation$, and let $ \barycentre\cube $ denote the barycentre of $\cube$. We may lift $ f \colon \sd(\skel{k-1}\cube) \to S^1 $ to a map $ \lift f \colon \sd(\skel{k-1} \cube) \to \RR $; here we are using that $\foneskel$ comes from a cocycle when $ k=2 $.
We define
\[
	\lift\baryf(\barycentre\cube) = \min \set*{\restrict {\lift f}{\boundary \cube}} + k \epsilon,
\]
and we extend it linearly to all the simplices of $ \sd \cubulation $. Then we pass it to the quotient $ \baryf \colon \cube \to S^1 $: it is easy to see that the resulting $\baryf$ does not depend on the choice of the lift. A concrete example of the construction can be seen in \cref{fig:low-barycentric}.

\begin{remark}
	Note that, in the previous definition, the minimum is always attained at a vertex of $\cube$.
\end{remark}

\newboolean{subdivide}
\newboolean{showlink}
\begin{figure}
	\centering
	\setboolean{subdivide}{false}
	\begin{subfigure}{0.4\textwidth}
		\includestandalone[height=5cm]{pictures/low-barycentric}
	\end{subfigure}
	\setboolean{subdivide}{true}
	\begin{subfigure}{0.4\textwidth}
		\includestandalone[height=5cm]{pictures/low-barycentric}
	\end{subfigure}
	\caption{On the left, a square $Q$ with its edges oriented; the number on the vertices are the values of the lift of $ \foneskel $ to $ \RR $. On the right, the values of the lift $ \lift\baryf \colon \sd Q \to \RR $.}
	\label{fig:low-barycentric}
\end{figure}

This procedure defines inductively a Bestvina-Brady map $ \baryf \colon \sd\cubulation \to S^1 $.
To show that $ \baryf $ satisfies the hypotheses of \cref{thm:inherited-state-collapsibility}, we employ a well-known decomposition of links in barycentric subdivision into a join of two simplicial complexes, as follows (compare for example \cite{DavisOkun}*{Lemma 2.1}).

Let $\cube$ be a cube of $\cubulation$. The link of its barycentre $\barycentre\cube$ is the full subcomplex of $ \sd\cubulation $ whose vertices are all the barycentres of the proper faces and cofaces of $\cube$.

This motivates the following definition.

\begin{definition}
	We call \newterm{face link of $\cube$}, and denote with $ \flink\cube $, the full subcomplex of $ \sd\cubulation $ whose vertices are barycentres of proper faces of $\cube$. Similarly, we call \newterm{coface link of $\cube$} the full subcomplex of $ \sd\cubulation $ whose vertices are barycentres of proper cofaces of $\cube$; we denote it with $ \coflink\cube $.
\end{definition}

\begin{remark}
	The face link $\flink\cube$ coincides with the barycentric subdivision of $ \boundary\cube $. \end{remark}

\begin{lemma}
	The link $ \link(\barycentre\cube, \sd\cubulation) $ is the join $ \flink\cube * \coflink\cube $.
\end{lemma}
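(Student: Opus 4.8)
The plan is to identify, for each simplex of $\link(\barycentre\cube, \sd\cubulation)$, a canonical decomposition of its vertex set into a "face part" and a "coface part", and then check that this matches the definition of the join $\flink\cube * \coflink\cube$. First I would recall that a simplex of the barycentric subdivision $\sd\cubulation$ is by definition a flag $\sigma_0 \subsetneq \sigma_1 \subsetneq \dots \subsetneq \sigma_r$ of cells of $\cubulation$, represented by the simplex on the barycentres $\barycentre{\sigma_0}, \dots, \barycentre{\sigma_r}$. A simplex lies in $\link(\barycentre\cube, \sd\cubulation)$ precisely when adding $\barycentre\cube$ to it still yields a flag, i.e.\ when $\cube$ can be inserted into the chain $\sigma_0 \subsetneq \dots \subsetneq \sigma_r$ while keeping it strictly increasing. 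Since the $\sigma_i$ form a chain, this means the chain splits as $\sigma_0 \subsetneq \dots \subsetneq \sigma_p \subsetneq \cube \subsetneq \sigma_{p+1} \subsetneq \dots \subsetneq \sigma_r$ for some $p$; equivalently, every $\sigma_i$ is either a proper face of $\cube$ (those with $i \le p$) or a proper coface of $\cube$ (those with $i \ge p+1$), and no $\sigma_i$ equals $\cube$.

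Next I would observe that this is exactly the description of a simplex in the join. A simplex of $\flink\cube * \coflink\cube$ is a (possibly empty) simplex $\alpha$ of $\flink\cube$ together with a (possibly empty) simplex $\beta$ of $\coflink\cube$, forming the simplex $\alpha * \beta$ on the union of their vertices. By the preceding remark, $\flink\cube$ is the barycentric subdivision of $\boundary\cube$, so its simplices are flags of proper faces of $\cube$; and $\coflink\cube$ is by definition the full subcomplex on barycentres of proper cofaces of $\cube$, so its simplices are flags of proper cofaces of $\cube$ (fullness guarantees that any set of barycentres forming a chain spans a simplex). The matching is then: given a simplex of the link, the flag $\sigma_0 \subsetneq \dots \subsetneq \sigma_p$ of proper faces is a simplex $\alpha$ of $\flink\cube$, the flag $\sigma_{p+1} \subsetneq \dots \subsetneq \sigma_r$ of proper cofaces is a simplex $\beta$ of $\coflink\cube$, and the original simplex is $\alpha * \beta$. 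Conversely, given $\alpha$ (a chain of proper faces) and $\beta$ (a chain of proper cofaces), every face of $\cube$ is strictly contained in every coface of $\cube$, so the concatenation $\alpha, \beta$ is automatically a chain, hence a simplex of $\sd\cubulation$, and it lies in $\link(\barycentre\cube, \sd\cubulation)$ because inserting $\barycentre\cube$ between the two blocks keeps it a flag. These two assignments are mutually inverse and respect the face relation, so they give a simplicial isomorphism.

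One small point to verify carefully is that the link is taken inside $\sd\cubulation$ as a \emph{full} subcomplex on the relevant barycentres — this is precisely the statement quoted just before the lemma ("the link of its barycentre $\barycentre\cube$ is the full subcomplex of $\sd\cubulation$ whose vertices are all the barycentres of the proper faces and cofaces of $\cube$"), so I would invoke that and then only need the combinatorial bookkeeping above. The main (and really only) obstacle is making sure the "no $\sigma_i$ equals $\cube$" condition is handled correctly: a simplex of the link must not contain $\barycentre\cube$ itself, which is automatic since $\barycentre\cube \notin \link(\barycentre\cube, \sd\cubulation)$, and correspondingly neither $\flink\cube$ nor $\coflink\cube$ contains $\barycentre\cube$ (both use \emph{proper} faces/cofaces). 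Everything else is the standard fact that the barycentric subdivision of a cell and the poset of its cofaces assemble into a join in the link of the barycentre; I would cite \cite{DavisOkun}*{Lemma 2.1} as a reference for this well-known decomposition and keep the write-up short.
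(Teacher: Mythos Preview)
Your proposal is correct and follows the same approach as the paper: the paper's proof is the single line ``Follows directly from the definition of barycentric subdivision,'' and your argument is precisely the unpacking of that sentence (flags in $\sd\cubulation$, splitting at $\cube$, matching with the join). The reference to \cite{DavisOkun}*{Lemma 2.1} is the same one the paper cites just before the lemma.
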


\begin{proof}
	Follows directly from the definition of barycentric subdivision.
\end{proof}

We can obtain a similar statement for the ascending and descending links by intersecting them with the decomposition given above.

\begin{definition}
	The \newterm{ascending} (resp.~\newterm{descending}) \newterm{face link} of $\cube$ is the intersection of $\flink\cube$ with the ascending (resp.~descending) link of $ \baryf $ at $ \barycentre \cube $.s The \newterm{ascending} (resp.~\newterm{descending}) \newterm{coface link} of $\cube$ is defined analogously by intersecting $ \coflink\cube $ with the ascending (resp.~descending) link at $ \barycentre \cube $.

	We denote the ascending and descending face links with $ \flink_{\uparrow/\downarrow}\cube $, and the ascending/descending coface links with $ \coflink_{\uparrow/\downarrow}\cube $.
\end{definition}

\begin{lemma}\label{lemma:linkup-join-flinkup-coflinkup}
	The ascending link $ \linkup[\baryf]{\barycentre\cube} $ is the join $ \flinkup\cube * \coflinkup \cube $. Analogously, the descending link $ \linkdown{\barycentre\cube} $ is the join $ \flinkdown\cube * \coflinkdown\cube $.
\end{lemma}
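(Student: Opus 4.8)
The plan is to show that the ascending link decomposes as a join by combining two facts: the link of $\barycentre\cube$ already splits as the join $\flink\cube * \coflink\cube$ (proved in the preceding lemma), and the ascending-link condition is, in a suitable sense, ``compatible with joins''. Concretely, I would first recall that a simplex $\sigma$ of $\link(\barycentre\cube,\sd\cubulation)$ is a chain of barycentres $\barycentre{\tau_1} < \dots < \barycentre{\tau_r}$ of proper faces and cofaces of $\cube$, and that such a chain splits uniquely as a ``face part'' (those $\tau_i$ that are proper faces of $\cube$) followed by a ``coface part'' (those that are proper cofaces), since every proper face of $\cube$ is contained in every proper coface of $\cube$. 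This is exactly the join decomposition of the full link.

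Next I would pin down when such a simplex $\sigma$ lies in the ascending link $\linkup[\baryf]{\barycentre\cube}$. By definition this happens precisely when $\barycentre\cube$ is the minimum of any lift of $\restrict\baryf{\sigma'}$ to $\RR$, where $\sigma'$ is the simplex of $\sd\cubulation$ spanned by $\barycentre\cube$ together with $\sigma$. Since $\baryf$ is affine (hence monotone along the chain of barycentres), $\barycentre\cube$ being the minimum is equivalent to $\baryf$ being larger at every $\barycentre{\tau_i}$ appearing in $\sigma$ — more precisely, to the condition that $\lift\baryf(\barycentre{\tau_i}) > \lift\baryf(\barycentre\cube)$ for the appropriate lift, for each $i$. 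The key point is that this is a \emph{vertex-wise} condition: $\sigma \in \linkup[\baryf]{\barycentre\cube}$ if and only if every vertex $\barycentre{\tau_i}$ of $\sigma$, viewed as a vertex of $\link(\barycentre\cube,\sd\cubulation)$, lies in $\linkup[\baryf]{\barycentre\cube}$ — because $\baryf$ restricted to a simplex is affine, so the minimum over $\sigma' $ is attained at a vertex, and it is attained at $\barycentre\cube$ iff $\barycentre\cube$ beats each of the finitely many other vertices individually. (One should double-check that along each edge $[\barycentre\cube,\barycentre{\tau_i}]$ the function is genuinely nonconstant, which holds because $\baryf$ is a Bestvina-Brady Morse function.)

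Granting that vertex-wise characterisation, the conclusion is formal: $\linkup[\baryf]{\barycentre\cube}$ is the full subcomplex of $\link(\barycentre\cube,\sd\cubulation)$ on the vertex set $\{\barycentre\tau : \barycentre\tau \in \linkup[\baryf]{\barycentre\cube}\}$. Intersecting this vertex set with the two sides of the join decomposition $\link(\barycentre\cube,\sd\cubulation) = \flink\cube * \coflink\cube$ gives exactly the vertex sets of $\flinkup\cube$ and $\coflinkup\cube$ by the definition of ascending face/coface link. Since a full subcomplex of a join on a union of vertex subsets, one from each factor, is the join of the induced full subcomplexes, we get $\linkup[\baryf]{\barycentre\cube} = \flinkup\cube * \coflinkup\cube$. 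The descending case is identical with ``minimum'' replaced by ``maximum'' (equivalently, replace $\baryf$ by $-\baryf$).

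The main obstacle, and the only step requiring genuine care, is the vertex-wise characterisation of the ascending link: one must verify that for the specific function $\baryf$ built by the inductive barycentre-pushing construction, ``$\barycentre\cube$ is the minimum of a lift of $\restrict\baryf{\sigma'}$'' really is equivalent to the conjunction of the edgewise conditions on $[\barycentre\cube,\barycentre{\tau_i}]$. This is true for any affine function on a simplex — the minimum of an affine function over a simplex is attained at a vertex — but I would want to confirm there is no subtlety coming from the quotient $\RR \to S^1$ (i.e.\ that a consistent lift $\lift f$ over the whole simplex $\sigma'$ exists, which it does since simplices are simply connected) and from the possibility of ties in the values of $\baryf$ (the construction with the $\epsilon$-perturbations and the $+k\epsilon$ shifts is designed precisely to keep the relevant values distinct, so the minimum is attained uniquely at $\barycentre\cube$ when it is attained there at all). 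Everything else is bookkeeping with full subcomplexes and joins.
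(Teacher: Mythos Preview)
Your argument is correct and complete. The paper itself gives no proof of this lemma: it is stated and then immediately used, the authors evidently regarding it as a direct consequence of the preceding join decomposition $\link(\barycentre\cube,\sd\cubulation)=\flink\cube*\coflink\cube$ together with the definitions of ascending face and coface link as intersections with $\linkup{\barycentre\cube}$. Your write-up makes explicit the one genuine point, namely that the ascending link of an affine Bestvina--Brady Morse function is the \emph{full} subcomplex on the vertices it contains, and that a full subcomplex of a join on a vertex set split between the two factors is the join of the corresponding full subcomplexes; this is exactly why the intersection-with-join passes through.
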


This way, to show that the ascending link of $\barycentre\cube$ is collapsible, it suffices to show that at least one of $ \flinkup\cube $ and $ \coflinkup\cube $ is collapsible. We give an algorithmic way to compute both: we will show that the goodness of a face controls the face link, while the inherited state controls the coface link.

\subsection{Ascending and descending face links}

Let $\cube$ be a $k$-cube of $\cubulation$. This is dual to some cell of the tessellation of $\mfld$ into copies of $\pol$; in particular, it is a face $ \face[v] $ of some $ \pol[v] $, where $v$ is a vector of $\vectorspace$. We denote by $ \deffacets $ the set of defining facets for $\face$, so $ \face=\bigcap \deffacets $.

Our aim is to compute ascending and descending face links of $\cube$. To do this, we need to study the restriction $ \restrict \baryf\cube \colon \cube \to S^1 $; since $\cube$ is simply connected, it is more convenient to study the lift $ \lift \baryf \colon \cube \to \RR $. We conventionally choose the lift such that it has minimum at $0$.

Let us start by analyzing the simple case where all the facets of $\deffacets$ belong to the same move. We call such a cube \emph{monochromatic}. By the compatibility of the state, all the edges coming out of a vertex of $\cube$ are all either oriented inward or all oriented outwards, so we get the pictures in \cref{fig:bad-cube}.
\begin{remark}
	All edges of $\cubulation$ are by definition monochromatic $1$-cubes.
\end{remark}

\newboolean{threedim}
\begin{figure}
	\centering
	\begin{subfigure}{0.4\textwidth}
		\includestandalone[height=5cm]{pictures/bad-cube}
	\end{subfigure}
	\setboolean{threedim}{true}
	\begin{subfigure}{0.4\textwidth}
		\includestandalone[height=5cm]{pictures/bad-cube}
	\end{subfigure}
	\caption{On the left, a monochromatic square; on the right, a monochromatic cube. The numbers on the vertices denote the values of $ \lift f $ on the vertices.}
	\label{fig:bad-cube}
\end{figure}

We may explicitly compute the value of $ \lift \baryf $ at the vertices of the barycentric subdivision of a monochromatic cube $\cube$. On the vertices of the original cube, the function assumes values $0$ and $1$ in a checkerboard-like pattern. Every positive-dimensional face $\varcube$ of $\cube$ contains a vertex $v$ such that $ \lift\baryf(v) = 0 $, so $ \lift f $ assumes the value $ h \epsilon $ at the barycentres of all $h$-faces of $Q$, for all $1 \leq h \leq k$.

We get the following lemma.

\begin{lemma}\label{lemma:face-link-monochromatic}
	Let $\cube$ be a monochromatic $k$-dimensional cube. Then the ascending face link consists of $ 2^{k-1} $ disjoint points, while the descending face link is obtained from $ \sd \boundary\cube $ by removing the open stars of $ 2^{k-1} $ vertices of $\cube$.

	In particular, ascending and descending face links of $1$-cubes are collapsible, while ascending and descending face links of a monochromatic square collapse to $S^0$ (i.e.~a disjoint union of two points).
\end{lemma}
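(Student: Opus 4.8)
The plan is to read everything off from the explicit values of $\lift\baryf$ just recorded, using the standard identification of the simplices of the face link $\flink\cube = \sd\boundary\cube$ with flags of proper faces of $\cube$. I would first recall that a simplex of $\sd\boundary\cube$ is a chain $\varcube[0] \subsetneq \dots \subsetneq \varcube[j]$ of proper faces of $\cube$, and that such a simplex $\sigma$ lies in $\flinkup\cube$ (resp.\ in $\flinkdown\cube$) exactly when $\barycentre\cube$ is the minimum (resp.\ the maximum) of $\lift\baryf$ among the vertices of the simplex $\barycentre\cube * \sigma$ of $\sd\cubulation$ --- equivalently, since $\lift\baryf$ is affine on each simplex, when $\lift\baryf(\barycentre\cube) = k\epsilon$ satisfies $k\epsilon \le \lift\baryf(\barycentre{\varcube[i]})$ (resp.\ $k\epsilon \ge \lift\baryf(\barycentre{\varcube[i]})$) for every $i$. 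The relevant values are: $k\epsilon$ at $\barycentre\cube$; on the vertices of $\cube$ the values $0$ and $1$ in the checkerboard pattern, with the two colour classes of the (bipartite) $1$-skeleton of $\cube$ accounting for $2^{k-1}$ vertices each; and the value $h\epsilon$ at the barycentre of any $h$-face with $1 \le h \le k-1$.

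For the ascending face link I would observe that, for $\epsilon$ as small as fixed in the construction, $0 < k\epsilon$, $h\epsilon < k\epsilon$ for $h \le k-1$, and $k\epsilon < 1$; hence $k\epsilon \le \lift\baryf(\barycentre{\varcube[i]})$ can hold only when $\varcube[i]$ is one of the $2^{k-1}$ vertices of value $1$. Since a chain of proper faces contains at most one vertex, namely its minimal element, the only simplices that survive are the $0$-simplices at the barycentres of those $2^{k-1}$ vertices, so $\flinkup\cube$ is $2^{k-1}$ disjoint points.

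For the descending face link the same comparison shows that $k\epsilon \ge \lift\baryf(\barycentre{\varcube[i]})$ holds automatically at the vertices of value $0$ and at the barycentres of all positive-dimensional proper faces, and fails precisely when some $\varcube[i]$ is a vertex of value $1$. As a vertex is the minimal element of every chain containing it, the simplices thrown away are exactly those in the open star of the barycentre of a value-$1$ vertex; removing these from $\sd\boundary\cube$ is the asserted description of $\flinkdown\cube$. I would then dispatch the special cases directly: for $k = 1$ both face links are a single point, hence collapsible; for $k = 2$ the ascending face link is two points, i.e.\ $S^0$, and the descending face link is the octagon $\sd\boundary\cube$ with the open stars of its two antipodal value-$1$ vertices removed, that is, two disjoint arcs, which collapse to $S^0$.

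I do not expect a genuine obstacle in this argument; the points that need care are the use of the smallness of $\epsilon$ --- in particular the observation that a value-$1$ vertex of $\cube$ keeps the honest value $1$, not a small multiple of $\epsilon$, which is exactly what prevents the ascending face link from being empty and forces it to be precisely the set of those vertices --- and the elementary bookkeeping that identifies ``chains passing through a given vertex'' with ``the open star of that vertex's barycentre in $\sd\boundary\cube$''.
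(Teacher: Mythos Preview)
Your argument is correct and follows essentially the same route as the paper's proof: both compute $\lift\baryf(\barycentre\cube)=k\epsilon$, observe that among the barycentres of proper faces only the $2^{k-1}$ value-$1$ vertices exceed this, and then identify the descending face link as the full subcomplex on the remaining vertices, i.e.\ $\sd\boundary\cube$ with those open stars removed. The only difference is presentational --- you spell out the chain description of simplices in $\sd\boundary\cube$ explicitly, whereas the paper just invokes the full-subcomplex characterisation directly.
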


\begin{proof}
	Since $ \lift\baryf(\barycentre\cube) = k\epsilon $, the only barycentres of faces on which $ \lift\baryf $ assumes a higher value are the vertices of the cube on which $ \baryf $ assumes value $1$. So the ascending face link contains exactly half of the vertices of $\cube$. The descending face link is the full subcomplex of $ \sd \boundary \cube $ with vertex set consisting of all vertices that do not belong to the ascending face link: so it is obtained by removing from $ \sd \boundary Q $ the open stars of vertices in the ascending link.
\end{proof}

We now turn our attention to the general case. Given a cube $\cube$, we can always decompose it as a product $ \cube[1] \times \dots \times \cube[\ell] $, where every factor is monochromatic. This decomposition is obtained by looking at how the moves partition the set $\deffacets$; in particular, a $2$-dimensional face of $\cube$ is coherent if and only it is not contained in one of the factors.

Consider a product $ \cube = \cube[1] \times \cube[2] $ of two monochromatic cubes. If $ v, v' $ are vertices of $ \cube[2] $, the orientation of the edges on $ \cube[1] \times \set v $ and $ \cube[1] \times \set{v'} $ coincide, so $ \baryf(x,v) = \baryf(x,v') $ for all $ x \in \cube[1] $. For this reason, whenever we have a product $ \cube = \cube[1] \times \dots \times \cube[\ell] $  of monochromatic cubes, we have a well-defined restriction $ \restrict \baryf{\cube[i]} $, obtained by restricting $ \baryf $ to any face of the form $ \set{v_1} \times \dots \times \cube[i] \times \dots \times \set{v_\ell}  $, where $ v_j $ is a vertex of $ \cube[j] $ for all $ j \neq i $.

\begin{lemma}\label{sum-decomposition}
	Let $ \cube $ be a cube of $ \cubulation $ that decomposes as a product of monochromatic cubes $ \cube[1] \times \dots \times \cube[\ell] $. Then
	\[
		\baryf(x_1, \ldots, x_\ell) = \sum_{i=1}^\ell \restrict \baryf{\cube[i]}(x_i)
	\]
	for every $ (x_1, \ldots, x_\ell) \in \cube[1] \times \dots \times \cube[\ell] $.
\end{lemma}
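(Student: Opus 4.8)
The plan is to prove this by induction on the total dimension $k = \dim \cube = \sum_{i=1}^\ell \dim \cube[i]$, mirroring the inductive definition of $\baryf$ itself. The statement is really a statement about the lift $\lift\baryf \colon \cube \to \RR$ (with minimum normalised to $0$), since both sides of the claimed identity are continuous and agree on a single vertex, so it suffices to prove the lifted identity; I will work with $\lift\baryf$ throughout and descend to $S^1$ at the end. Note first that the restrictions $\restrict{\baryf}{\cube[i]}$ are well-defined by the discussion immediately preceding the lemma (the orientations, hence $\baryf$, are constant in the other factors), so the right-hand side makes sense.

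The base case is $k \le 1$: a $0$-cube is a vertex where everything vanishes, and a $1$-cube is monochromatic with $\ell = 1$, so the identity is trivial. For the inductive step, fix a product $\cube = \cube[1] \times \dots \times \cube[\ell]$ and assume the formula holds for all products of monochromatic cubes of smaller total dimension. First I would check the identity on $\boundary \cube$: every proper face $\varcube$ of $\cube$ is again a product of faces $\varcube[i] \subseteq \cube[i]$ (each monochromatic, being a face of a monochromatic cube), with strictly smaller total dimension, so by the inductive hypothesis $\baryf$ restricted to $\varcube$ is the sum of the $\restrict{\baryf}{\varcube[i]}$, and each of these equals the restriction of $\restrict{\baryf}{\cube[i]}$ to $\varcube[i]$ since the restriction $\restrict{\baryf}{\cube[i]}$ is itself defined by the same inductive recipe. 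Hence the two sides of the claimed identity agree on all of $\boundary\cube$, and in particular their lifts agree there once normalised compatibly.

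It then remains to check the identity at the barycentre $\barycentre\cube$ and extend linearly over the simplices of $\sd\cube$ meeting $\barycentre\cube$ — but linear extension is automatic once we have agreement at $\barycentre\cube$ and on $\sd\boundary\cube$, because each top simplex of $\sd\cube$ is the join of $\barycentre\cube$ with a simplex of $\sd\boundary\cube$ and both sides of the identity are affine on it. So the crux is the single numerical identity
\[
	\lift\baryf(\barycentre\cube) = \sum_{i=1}^\ell \restrict{\lift\baryf}{\cube[i]}(\barycentre{\cube[i]}).
\]
By definition $\lift\baryf(\barycentre\cube) = \min\bigl\{\restrict{\lift f}{\boundary\cube}\bigr\} + k\epsilon$. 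The minimum of $\lift f$ over $\boundary\cube$ is attained at a vertex, and by the agreement on $\boundary\cube$ already established (and the product structure of the vertex set), that minimum equals $\sum_{i=1}^\ell \min\bigl\{\restrict{\lift f}{\boundary\cube[i]}\bigr\}$ — the minima over the factors are attained independently. Since $k = \sum_i k_i$ with $k_i = \dim\cube[i]$, and $\restrict{\lift\baryf}{\cube[i]}(\barycentre{\cube[i]}) = \min\bigl\{\restrict{\lift f}{\boundary\cube[i]}\bigr\} + k_i\epsilon$ by definition applied to each factor, summing over $i$ reproduces exactly $\lift\baryf(\barycentre\cube)$. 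Passing back to $S^1$ gives the stated formula.

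The main obstacle I anticipate is purely bookkeeping: making precise the claim that "$\baryf$ on a face $\varcube[i]$ of $\cube[i]$, computed inside $\cube$, agrees with $\restrict{\baryf}{\cube[i]}$ computed on $\cube[i]$ alone." This is where one must be careful that the barycentric-subdivision construction is genuinely local — that the value $\lift\baryf(\barycentre\cube')$ for a face $\cube'$ depends only on $\restrict{\lift f}{\boundary\cube'}$ and $\dim\cube'$, not on the ambient cube — which is immediate from the recursive definition but should be stated cleanly, perhaps as a preliminary remark, before running the induction. Everything else is a routine consequence of the product structure of cubes and their barycentric subdivisions.
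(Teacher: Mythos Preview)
Your argument is correct, and it takes a genuinely different route from the paper's. The paper does not induct on $\dim\cube$; instead it fixes all coordinates but one and shows directly that
\[
	\lift\baryf(x_1,\dots,x_i,\dots,x_\ell)-\lift\baryf(x_1,\dots,x'_i,\dots,x_\ell)=\lift f_i(x_i)-\lift f_i(x'_i)
\]
for vertices $x_j\in\sd\cube[j]$, so the two sides agree on all such points up to an additive constant, which is then pinned to $0$ by the common normalisation of the minima. Your induction instead follows the recursive definition of $\baryf$: the boundary case is handled by the inductive hypothesis on proper faces (each a product of smaller monochromatic cubes), and the single remaining value at $\barycentre\cube$ drops out of the observation that the minimum over vertices of a product is the sum of the factor minima. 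Both arguments ultimately rest on that same product-of-minima fact; the paper hides it inside the asserted difference identity, while you make it explicit. Your approach has the advantage of tracking the definition of $\baryf$ step for step and of isolating cleanly the one nontrivial numerical check; the paper's approach is shorter but the displayed difference identity needs the reader to supply exactly the computation you wrote out. One small point worth adding to your write-up: the affine-extension step requires that the right-hand side $\sum_i\restrict{\lift\baryf}{\cube[i]}(x_i)$ is affine on each simplex of $\sd\cube$, which holds because the projection of such a simplex to $\cube[i]$ lands in a single simplex of $\sd\cube[i]$ (chains of faces project to chains); the paper glosses over this too, but since you flagged bookkeeping as the main hazard it is worth one sentence.
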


\begin{proof}
	Lift every $ \restrict{\baryf}{\cube[i]} $ to a map $ \lift f_i \colon \cube[i] \to \RR $, in such a way that their minimum is $0$. Lift also $ \restrict \baryf\cube $ to $ \lift \baryf \colon \cube \to \RR $, with the same condition on the minimum. We claim that $ \lift \baryf(x_1, \dots, x_\ell) = \sum_{i=1}^{\ell} \lift f_i(x_i) $; once proved this, the lemma follows by passing to the quotient.

	Let $ x_1, \dots, x_\ell $ be vertices of $ \sd \cube[1], \dots, \sd \cube[\ell] $. Choose an $ i \in \range \ell $, and let $ x'_i $ be another vertex of $ \sd\cube[i] $.

	Then we have that
	\[
		\lift \baryf(x_1, \dots, x_i, \dots, x_\ell) -  \lift \baryf(x_1, \dots, x'_i, \dots, x_\ell) = \lift f_i(x_i) - \lift f_i(x'_i).
	\]
	This implies that the two members of the claimed equality coincide up to an integer constant. By the condition on the minimum, the constant must be $0$, so we conclude that the claim holds whenever the $x_i$ are vertices of the barycentric subdivision.

	Since we are taking the unique affine extension on each simplex of $ \sd\cube $, the claim holds for every point in $\cube$.
\end{proof}

We now translate \cref{def:good-polytope} to the dual cube complex.

\begin{definition}
	We call a $k$-cube $ \cube $ \newterm{good} if it is dual to a good face of $P$ (as per \cref{def:good-polytope}), and \newterm{bad} otherwise.
\end{definition}

\begin{remark}
	As noted in \cref{rem:p-bad}, the polytope $\pol$ is always bad; therefore all vertices of $\cubulation$ are bad, since that are dual to some copy of $\pol$.
\end{remark}

A $k$-cube is therefore good if and only if there is a move containing exactly one facet of $ \deffacets $; in other words, when at least one of its monochromatic factors is $1$-dimensional.

In \cref{fig:bad-square-times-interval} we have an example of a product of a bad square with an interval, which is a good cube.

\newboolean{subdivision}
\begin{figure}
	\centering
	\begin{subfigure}{0.4\textwidth}
		\includestandalone[height=5cm]{pictures/bad-square-times-interval}
	\end{subfigure}
	\setboolean{subdivision}{true}
	\begin{subfigure}{0.4\textwidth}
		\includestandalone[height=5cm]{pictures/bad-square-times-interval}
	\end{subfigure}
	\caption{On the left, the product of a bad square with an interval. On the right, the ascending (blue) and descending (red) face links. The descending link is a join of the face links of a bad square and of an interval, while the ascending link only collapses on a join (drawn with a thick blue line).}
	\label{fig:bad-square-times-interval}
\end{figure}

\begin{lemma}\label{res:face-link-product}
	Let $\cube$ be a cube of $\cubulation$, and let $ \cube[1] \times \dots \times \cube[\ell] $ be its decomposition into monochromatic factors. The descending face link of $\cube$ is a subdivision of the join of the descending links of the $\cube[i]$.
\end{lemma}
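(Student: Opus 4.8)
The statement decomposes the descending face link of a product cube into a join. My plan is to reduce everything to the two-factor case and then to invoke the additivity from \cref{sum-decomposition}. First I would recall that by the remark following the face link definition, $\flink\cube$ is exactly $\sd(\boundary\cube)$, and that $\boundary\cube$ for $\cube = \cube[1]\times\cdots\times\cube[\ell]$ is the union of the subcomplexes $\cube[1]\times\cdots\times\boundary\cube[i]\times\cdots\times\cube[\ell]$. The barycentric subdivision of a product $A\times B$ is canonically a subdivision of $\sd A * \sd B$ — this is the standard fact (as cited, \cite{DavisOkun}*{Lemma 2.1}) that the link of the barycentre of a product cell is the join of the two face links — so combinatorially $\sd(\boundary(\cube[1]\times\cube[2]))$ is a subdivision of $\sd(\boundary\cube[1]) * \sd(\boundary\cube[2])$, i.e. of $\flink\cube[1] * \flink\cube[2]$. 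The content of the lemma is that the \emph{descending} part respects this join decomposition.

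**Key steps.**
First, reduce to $\ell = 2$ by induction: once I know $\flinkdown\cube$ is a subdivision of $\flinkdown\cube[1] * \flinkdown(\cube[2]\times\cdots\times\cube[\ell])$, I can iterate, using that a join of subdivisions is a subdivision of the join. Second, for the case $\cube = \cube[1]\times\cube[2]$, I need to identify which vertices of $\sd(\boundary\cube)$ lie in the descending link of $\baryf$ at $\barycentre\cube$. A vertex of $\sd(\boundary\cube)$ is the barycentre $\barycentre\varcube$ of a proper face $\varcube$ of $\cube$; such a face is a product $\varcube[1]\times\varcube[2]$ with $\varcube[i]$ a face of $\cube[i]$ and at least one inclusion proper. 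By \cref{sum-decomposition}, $\baryf$ on $\cube$ is the sum $\restrict\baryf{\cube[1]} \oplus \restrict\baryf{\cube[2]}$, and $\barycentre\varcube = (\barycentre\varcube[1], \barycentre\varcube[2])$, so $\lift\baryf(\barycentre\varcube) = \lift f_1(\barycentre\varcube[1]) + \lift f_2(\barycentre\varcube[2])$ while $\lift\baryf(\barycentre\cube) = \lift f_1(\barycentre\cube[1]) + \lift f_2(\barycentre\cube[2])$. Hence $\barycentre\varcube$ is below $\barycentre\cube$ (the condition for $\barycentre\cube$ to be the max along the edge, i.e. for $\varcube$'s barycentre to be in the descending link — strictly, I must phrase this in terms of lifts of $\restrict\baryf{[\barycentre\varcube,\barycentre\cube]}$) precisely when $\lift f_1(\barycentre\varcube[1]) \leq \lift f_1(\barycentre\cube[1])$ and $\lift f_2(\barycentre\varcube[2]) \leq \lift f_2(\barycentre\cube[2])$, with the usual subtlety that when $\varcube[i] = \cube[i]$ the corresponding inequality is an equality and imposes nothing. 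Third, I conclude: the descending face link is the full subcomplex of $\sd(\boundary\cube)$ on the vertices $\barycentre\varcube[1]\times\barycentre\varcube[2]$ with each $\barycentre\varcube[i]$ either equal to $\barycentre\cube[i]$ or in $\flinkdown\cube[i]$ (and not both equal) — which is exactly the join $\flinkdown\cube[1] * \flinkdown\cube[2]$, viewed inside the subdivided $\flink\cube[1] * \flink\cube[2]$. I should also note that these inequalities are strict or non-strict consistently, so that no boundary case spoils the combinatorial join structure; the fact that the minimum defining $\lift\baryf(\barycentre\cube)$ is attained at a vertex (the remark after the construction), together with the $k\epsilon$ shifts, keeps all the relevant values distinct enough.

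**Main obstacle.**
The delicate point is bookkeeping of the degenerate factors: when $\varcube$ is a proper face of $\cube$ via a proper inclusion in only one factor, the other factor contributes its \emph{full} barycentre, not a boundary vertex, and one must check this is consistent with the join $\flinkdown\cube[1] * \flinkdown\cube[2]$ including the "cone points" correctly — i.e. that the join of two subcomplexes-of-boundaries, when realized inside $\sd(\boundary(\cube[1]\times\cube[2]))$ rather than $\sd((\cube[1]\times\cube[2]))$, is precisely what appears. This is why the statement says "subdivision of the join" rather than "equals the join": the cell structures differ, and I would make the identification at the level of underlying spaces / as an isomorphism after a further common subdivision, citing \cite{DavisOkun}*{Lemma 2.1} for the ambient decomposition and then cutting down to the descending parts using the additivity of \cref{sum-decomposition}. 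I expect the monochromatic base case (\cref{lemma:face-link-monochromatic}) to be used only implicitly, to know what $\flinkdown\cube[i]$ looks like, but it is not logically needed for the product statement itself.
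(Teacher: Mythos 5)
Your proposal is correct and follows the paper's strategy: use \cref{sum-decomposition} to decompose the condition ``$\barycentre\varcube$ lies below $\barycentre\cube$'' factorwise, and recognize the resulting vertex set as the join. The only cosmetic differences are that the paper condenses your $\epsilon$-inequality bookkeeping into the equivalent statement ``$\varcube$ contains a minimum of $\lift\baryf$'', treats all $\ell$ factors at once rather than inducting down to $\ell=2$, and sees the join structure by passing to the dual cross-polytopes $\boundary\cube^* = \boundary\cube[1]^* * \dots * \boundary\cube[\ell]^*$ rather than via \cite{DavisOkun}*{Lemma 2.1}.
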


\begin{proof}
	To see clearly the join decomposition, it is more convenient to look at the dual of the cubes, which are cross-polytopes. This way, we have that $ \boundary \cube^* $ is the join of the $ \boundary \cube[i]^* $. Recall also that the barycentric subdivision of a polytope is isomorphic to the barycentric subdivision of its dual.

	By the definition of $\baryf$, the barycentre of some face $\varcube$ of $\cube$ belongs to the descending face link of $\cube$ if and only if $\varcube$ contains a minimum for $ \lift \baryf \colon \cube \to \RR $. If $\varcube$ decomposes as a product of $ \varcube[1] \times \dots \times \varcube[\ell] $, requiring that $\varcube$ contains a minimum for $ \lift \baryf $ is equivalent to requiring that $\varcube[i]$ contains a minimum for $ \lift f_i$ for all $i$, which is in turn equivalent to the barycentre of $\varcube[i]$ belonging to the descending face link of $\cube[i]$. By looking dually, this proves that the descending face link of $\cube$ is the join of the descending face links of the $\cube[i]$.

	Similarly, the barycentre of $ \varcube $ belongs to the ascending link if and only if at least one of the barycentres of the $ \varcube[i] $ belong to the ascending link of $ \cube[i] $. This means that the ascending link is a regular neighbourhood of the join of the ascending links of the $ \cube[i] $, and so it collapses on that join (for details on the definition of regular neighbourhood, we refer to \cite[Chapter 4]{RourkeSanderson}).
\end{proof}

\begin{proposition}\label{cor:face-link-good-cube-and-bad-squares}
	If $ \cube $ is a good cube, then the ascending and descending face links of $\cube$ are collapsible.

	If $\cube$ is a $2k$-cube, and every move contains either zero or two facets of $\deffacets$, then the ascending and descending face links of $\cube$ collapse to the barycentric subdivision of the boundary of a $k$-dimensional cross-polytope (which is PL-homeomorphic to $ S^{k-1} $).
\end{proposition}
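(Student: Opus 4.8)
The plan is to reduce the statement to the monochromatic factors of $\cube$: \cref{res:face-link-product} relates the ascending and descending face links of $\cube$ to those of its factors, and \cref{lemma:face-link-monochromatic} describes the latter explicitly. Write $\cube = \cube[1] \times \dots \times \cube[\ell]$ for the decomposition into monochromatic cubes, so that $\dim\cube[i]$ equals the number of facets of $\deffacets$ contained in the $i$-th move. By \cref{res:face-link-product}, $\flinkdown\cube$ is a subdivision of the join $\flinkdown{\cube[1]} * \dots * \flinkdown{\cube[\ell]}$ and, by the regular-neighbourhood argument in its proof, $\flinkup\cube$ collapses onto the join $\flinkup{\cube[1]} * \dots * \flinkup{\cube[\ell]}$. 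So it suffices to understand these two joins, together with two standard facts about collapses: a collapse $A \searrow A'$ induces a collapse $A * B \searrow A' * B$ for any complex $B$ (hence $A * B$ is collapsible whenever $A$ is, since collapsing $A$ to a point $p$ leaves the cone $p * B$, which is collapsible), and barycentric subdivision takes a collapse to a collapse, so that the subdivisions appearing in \cref{res:face-link-product} are harmless (see \cite{RourkeSanderson}).

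\emph{First case: $\cube$ good.} If $\cube$ is good, then by \cref{def:good-polytope} some move contains exactly one facet of $\deffacets$, i.e.\ one monochromatic factor — say $\cube[1]$ — is a $1$-cube. By \cref{lemma:face-link-monochromatic}, $\flinkup{\cube[1]}$ and $\flinkdown{\cube[1]}$ are each a single point. Hence both $\flinkup{\cube[1]} * \dots * \flinkup{\cube[\ell]}$ and $\flinkdown{\cube[1]} * \dots * \flinkdown{\cube[\ell]}$ are cones, so they are collapsible; since $\flinkup\cube$ collapses onto the first and $\flinkdown\cube$ is a subdivision of the second, both $\flinkup\cube$ and $\flinkdown\cube$ are collapsible.

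\emph{Second case: $\cube$ a $2k$-cube, every move containing zero or two facets of $\deffacets$.} Then the monochromatic decomposition has exactly $k$ factors, each a square: $\cube = \cube[1] \times \dots \times \cube[k]$ with $\dim\cube[i] = 2$. By \cref{lemma:face-link-monochromatic}, for each $i$ the ascending face link $\flinkup{\cube[i]}$ is a pair of points, while $\flinkdown{\cube[i]}$ is $\sd \boundary \cube[i]$ with the open stars of two opposite vertices removed, i.e.\ a disjoint union of two arcs, which collapses onto a pair of points. Taking joins, $\flinkup{\cube[1]} * \dots * \flinkup{\cube[k]}$ is the $k$-fold join of $\sphere 0$, namely the boundary complex of a $k$-dimensional cross-polytope, and $\flinkdown{\cube[1]} * \dots * \flinkdown{\cube[k]}$ collapses onto (a subdivision of) it. Transporting this back through \cref{res:face-link-product}, both $\flinkup\cube$ and $\flinkdown\cube$ collapse onto the barycentric subdivision of the boundary of a $k$-dimensional cross-polytope, a complex PL-homeomorphic to $\sphere{k-1}$.

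The step requiring the most care is the behaviour of collapsing under the two operations in play. For the join one checks that an elementary collapse of $A$ lifts to a sequence of elementary collapses of $A * B$, and likewise for a collapse of the second factor; iterating this handles joins of several collapsing factors. For barycentric subdivision one invokes the classical fact that $A \searrow A'$ implies $\sd A \searrow \sd A'$, which lets us ignore the subdivisions produced by \cref{res:face-link-product}. The remaining points — identifying the $k$-fold join of $\sphere 0$ with the boundary of the cross-polytope, matching the subdivision coming from the product decomposition with the standard barycentric subdivision, and checking that a disjoint union of two arcs collapses to a $2$-point set — are routine and can be dispatched quickly.
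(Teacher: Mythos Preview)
Your proof is correct and follows essentially the same route as the paper: reduce to the monochromatic factors via \cref{res:face-link-product}, invoke \cref{lemma:face-link-monochromatic} to identify the face links of each factor (a point for a $1$-cube, something collapsing to $S^0$ for a square), and conclude by the behaviour of joins under collapse. You in fact supply more detail than the paper does---the stability of collapses under join and under barycentric subdivision, and the identification of the subdivision with the standard one---which the paper leaves implicit.
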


\begin{proof}
	In the first case, one of the factors of $\cube$ is an interval, whose ascending and descending face link is a point. By \cref{res:face-link-product}, the ascending and descending links of $\cube$ collapse on a cone, which is in turn collapsible.

	In the second case, all the monochromatic factors of $\cube$ are squares, so by \cref{lemma:face-link-monochromatic} the ascending and descending link collapse on the join of $k$ copies of $S^0$, i.e.~the boundary of a $k$-dimensional cross-polytope.
\end{proof}

\begin{remark}
	For vertices of the cubulation, the face link is empty, and in particular non-collapsible. This does not contradict \cref{cor:face-link-good-cube-and-bad-squares} as vertices are \emph{bad} by definition.
\end{remark}

\begin{corollary}\label{res:good-cube-collapsible-face-link}
	The ascending and descending links of $ \baryf $ at the barycentre of a good cube are collapsible.
\end{corollary}

\begin{proof}
	Follows by combining \cref{cor:face-link-good-cube-and-bad-squares} with \cref{lemma:linkup-join-flinkup-coflinkup}.
\end{proof}

\subsection{The coface link}

Let us switch our focus to the coface link. Let $\cube$ be a cube of $ \cubulation $; we are mostly interested in the case when $ \cube $ is bad, otherwise we already know that the ascending and descending links at $ \barycentre \cube $ are collapsible.

\begin{remark}
	To better understand this section, it is convenient to keep in mind the simple case where $\cube$ is $0$-dimensional, i.e.~it is a vertex $v$. In this case, $\cube$ is the barycentre of $\pol[v]$, and the coface link is the whole link.
\end{remark}

We employ the same notation used for the study of the face link, so we let $\cube$ be dual to some cell $\face$ of the tessellation of $\mfld$ into copies of $\pol$.
Let $v$ be a vertex of $ \cube $, so that $ \pol[v] $ contains $ \face $, and let $ \facet[1], \dots, \facet[k] $ denote the facets of $\pol[v]$ that intersect in $ \face $.

The coface link of $\cube$, being the dual notion of the face link, is the barycentric subdivision of the boundary of $ \face $. Its vertices are in correspondence with all the proper cofaces of $\cube$; to compute the ascending and descending coface link, a priori we would need to compute for each coface of $\cube$ whether the corresponding barycentre is in the ascending or descending link. The following lemma allows us to restrict our analysis to cofacets, i.e.~cofaces of dimension exactly one higher than $ \dim \cube $.

\begin{proposition}\label{prop:visual-link}
	Let $ \cube \subset \varcube $ be two cubes of $\cubulation$.
	Then $ \barycentre\varcube \in \coflinkup Q$ if and only if for every cofacet $\varvarcube$ of $\cube$, with $\varvarcube \subseteq \varcube $, we have $ \barycentre\varvarcube \in \coflinkup \cube $.
	Conversely, we have that $ \barycentre{Q'} \in \coflinkdown Q $ if and only if there is some cofacet of $ Q $ contained in $ Q' $ whose barycentre is in $ \coflinkdown Q $.
\end{proposition}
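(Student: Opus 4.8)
The plan is to first translate "$\barycentre\varcube\in\coflinkup\cube$" and its descending counterpart into a statement about where a fixed lift of $\baryf$ attains its minimum on a vertex set, and then to reduce that statement from $\varcube$ to the cofacets of $\cube$ contained in $\varcube$ by means of the monochromatic product decomposition. For a subcube $R\subseteq\varcube$ write $\mu(R)$ for the minimum, over the vertices of $R$, of a fixed lift $\lift\baryf\colon\varcube\to\RR$ (such a lift exists since $\varcube$ is simply connected and contains $\cube$). First, since $\cube\subsetneq\varcube$ is a chain, $\{\barycentre\cube,\barycentre\varcube\}$ spans an edge of $\sd\cubulation$; as $\baryf$ is affine and non-constant there, exactly one of $\barycentre\varcube\in\coflinkup\cube$, $\barycentre\varcube\in\coflinkdown\cube$ holds, according to whether $\lift\baryf(\barycentre\cube)<\lift\baryf(\barycentre\varcube)$ or $>$. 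By the construction of $\baryf$ — which is equivariant under integer shifts of the lift and whose defining minimum over the boundary of a cube is always attained at a vertex — one has $\lift\baryf(\barycentre\cube)=\mu(\cube)+(\dim\cube)\epsilon$ and $\lift\baryf(\barycentre\varcube)=\mu(\varcube)+(\dim\varcube)\epsilon$. Since $\baryf$ sends every vertex of $\cubulation$ to $0\in S^1$, the quantity $\mu(\cube)-\mu(\varcube)$ is a non-negative integer, so (for $\epsilon$ small, say $\epsilon<1/\dim\cubulation$)
\[
\barycentre\varcube\in\coflinkup\cube\iff\mu(\varcube)=\mu(\cube),\qquad\barycentre\varcube\in\coflinkdown\cube\iff\mu(\varcube)<\mu(\cube),
\]
and the same two equivalences hold with any cofacet of $\cube$ contained in $\varcube$ in place of $\varcube$.

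Next I would decompose $\varcube=\varcube[1]\times\dots\times\varcube[\ell]$ into its monochromatic factors, under which $\cube$ becomes a face $D_1\times\dots\times D_\ell$ with each $D_i$ a (possibly $0$-dimensional) face of $\varcube[i]$. By \cref{sum-decomposition}, after normalising I may assume $\lift\baryf=\sum_i\lift f_i$, where $\lift f_i$ lifts $\restrict\baryf{\varcube[i]}$ and has minimum $0$ on the vertices of $\varcube[i]$; hence $\mu(\varcube)=0$ and $\mu(\cube)=\sum_i a_i$ with $a_i:=\min_{D_i^{(0)}}\lift f_i\ge 0$, so $\barycentre\varcube\in\coflinkup\cube$ iff all $a_i=0$. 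The crucial input is that on the monochromatic cube $\varcube[i]$ the lift $\lift f_i$ takes only the values $0$ and $1$ on vertices, in a checkerboard pattern (the discussion preceding \cref{lemma:face-link-monochromatic}): this forces $a_i=0$ whenever $\dim D_i\ge 1$, and leaves $a_i=1$ possible only when $D_i$ is a single "$1$"-vertex of $\varcube[i]$, which then has dimension $\ge 1$. Moreover the cofacets of $\cube$ contained in $\varcube$ are precisely the cubes $D_1\times\dots\times\widetilde D_{i_0}\times\dots\times D_\ell$ obtained by enlarging one factor $D_{i_0}$ to a cofacet $\widetilde D_{i_0}\subseteq\varcube[i_0]$ of $D_{i_0}$, and such a cofacet has $\mu$-value $\sum_{i\ne i_0}a_i+\min_{\widetilde D_{i_0}^{(0)}}\lift f_{i_0}$.

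Assembling the pieces: if all $a_i=0$ then every cofacet of $\cube$ in $\varcube$ has $\mu$-value $0=\mu(\cube)$, so its barycentre lies in $\coflinkup\cube$; if some $a_{i_0}=1$, then $D_{i_0}$ is a single vertex incident in $\varcube[i_0]$ to a "$0$"-vertex, so the corresponding edge-cofacet has $\mu$-value $0<1=\mu(\cube)$ and its barycentre lies in $\coflinkdown\cube$. Hence $\barycentre\varcube\in\coflinkup\cube$ if and only if the barycentre of every such cofacet does, which is the ascending statement; the descending statement then follows by taking contrapositives, using that for each of the relevant edges of $\sd\cubulation$ exactly one of "ascending at $\barycentre\cube$" or "descending at $\barycentre\cube$" occurs. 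The hard part is the reduction via the product decomposition: one cannot verify $\mu(\varcube)=\mu(\cube)$ by naively testing cofacets one at a time, because a vertex of $\varcube$ far from $\cube$ in the cube graph might carry a small value of $\lift\baryf$ invisible on every individual cofacet; what rescues the argument is precisely the monochromatic, checkerboard structure, which forces the minimum of each factor to be attained already along a single coordinate edge. The remaining care is bookkeeping — tracking the product decomposition of $\cube$ inside $\varcube$, and in particular allowing the factors $D_i$ to collapse to points inside the $\varcube[i]$.
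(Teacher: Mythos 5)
Your proof is correct and follows essentially the same approach as the paper's: translate membership in the ascending/descending coface link into a condition on the lifted function ($\lift\baryf(\barycentre R)=\mu(R)+(\dim R)\epsilon$), use the monochromatic product decomposition together with \cref{sum-decomposition}, and exploit the checkerboard structure of monochromatic cubes to locate the factor where the minimum is missed and exhibit the witness edge-cofacet. The only (inessential) difference is your uniform bookkeeping via the quantities $\mu(R)$ and $a_i$, whereas the paper handles the forward direction more directly by observing that a cofacet containing a $0$-vertex of $\cube$ automatically has barycentre value $\epsilon\cdot\dim$.
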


\begin{proof}
	Lift $f \colon \varcube \to S^1$ to a function $ \lift f \colon \varcube \to \RR $ such that its minimum is $0$. Then $ \lift f(\barycentre\varcube) = \epsilon \cdot \dim\varcube  $, and $ \lift f (\barycentre\cube) < \lift f(\barycentre\varcube) $ if and only if $ \lift f $ assumes the value $0$ on $\cube$. In this case, for every cofacet $ \cube \subset \varvarcube \subseteq \varcube$ we have that $ \lift f(\barycentre\varvarcube) = \epsilon \cdot \dim \varvarcube > \lift f(\barycentre\cube)$, so all cofacets of $\cube$ contained in $\varcube$ have their barycentre in $ \coflinkup \cube $.

	Vice versa, decompose $\varcube$ as a product $\varcube[1] \times \dots \times \varcube[\ell]$ of its monochromatic factors, and define $ \cube[i]= \varcube[i] \cap \cube $, so that $ \cube = \cube[1] \times \dots \times \cube[i] $.

	Suppose that $ \lift f (\barycentre \cube) > \lift f (\barycentre\varcube)  $. Then by using \cref{sum-decomposition} there must be a $\cube[i]$ with $ \lift f (\barycentre{\cube[i]}) > \lift f(\barycentre{\varcube}) $; so $ \cube[i] $ does not contain a minimum of $ \lift f \colon  \varcube[i] \to \RR $. The only subcubes of a monochromatic cube that do not contain a minimum are the vertices where $ \lift f $ attains maximum. Therefore, the cube $ \cube[i] $ is $0$-dimensional.
	If we let $\varvarcube[i]$ be any edge such that $ \cube[i] \subset \varvarcube[i] \subseteq \varcube[i] $, we obtain a cofacet
	\[
		\varvarcube = \cube[1] \times \dots \times  \varvarcube[i] \times \dots \times \cube[\ell]
	\]
	of $Q$, whose barycentre is contained in $ \coflinkdown \cube $, as requested.
\end{proof}

Cofacets of $\cube$ correspond dually to facets of $\face$. If $ \face = \facet[1] \cap \dots \cap \facet[k] $, then the facets of $ \face $ are in natural correspondence with the facets $\facet$ of $\pol$ that are adjacent to all $F_i$, for all $ i \in \range k $; the correspondence is given by $ \facet \mapsto \facet \cap \face $.
Recall that $ \face $ inherits a state from $ \pol $, by assigning to the facet $ \facet \cap \face $ of $ \face $ an inherited status as follows:
\begin{itemize}
	\item if $\facet$ is in the same move as at least one of the $ \facet[i] $, then we assign status $ \Out $;
	\item else, we assign the same status as $ \facet $.
\end{itemize}

We are finally able to motivate this definition with the following lemma.

\begin{lemma}\label{res:state-inheritance}
	Let $\cube$ be a $k$-cube of $\cubulation$, and let $v$ be one of its vertices. Consider the copy $\pol[v]$ dual to $v$, and let $ \face $ be the face of $\pol[v]$ dual to $\cube$.

	Let $\varcube$ be a cofacet of $\cube$, and let $\facet$ be the facet of $\pol[v]$ such that $\varcube$ is dual to $ \facet \cap \face$.

	Then $ \barycentre\varcube \in \coflinkup \cube $ if and only if $ \facet \cap \face $ has inherited status $\Out$ in $\face$.
\end{lemma}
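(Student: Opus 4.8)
The statement to prove is \cref{res:state-inheritance}: for a cube $\cube$ dual to a face $\face$ of $\pol[v]$ and a cofacet $\varcube$ dual to $\facet \cap \face$, the barycentre $\barycentre\varcube$ lies in $\coflinkup\cube$ exactly when $\facet\cap\face$ has inherited status $\Out$ in $\face$. The plan is to translate the membership condition $\barycentre\varcube\in\coflinkup\cube$ into a statement about the lift $\lift f$ and the orientation of edges, and then match this against the two cases of \cref{def:inherited-state}.

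First I would unwind the definitions. The cofacet $\varcube$ has one more dimension than $\cube$, and $\barycentre\varcube\in\coflinkup\cube$ means precisely that $\lift\baryf(\barycentre\cube)$ is the minimum of $\restrict{\lift\baryf}{\varcube}$, i.e.\ that $\barycentre\cube$ is the minimum over $\varcube$ (by the proof of \cref{prop:visual-link}, $\lift f(\barycentre\cube)<\lift f(\barycentre\varcube)$ iff $\lift f$ attains value $0$ on $\cube$, where we normalise the lift on $\varcube$ to have minimum $0$). Next I would decompose $\varcube$ into its monochromatic factors. There are two possibilities depending on whether the new facet $\facet$ (the one with $\varcube$ dual to $\facet\cap\face$) lies in the same move as one of the defining facets $\facet[1],\dots,\facet[k]$ of $\face$, or in a move disjoint from all of them.

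In the first case, $\facet$ shares a move with some $\facet[i]$, so adding the direction of $\facet$ enlarges the $i$-th monochromatic factor $\cube[i]$ of $\cube$ into a higher-dimensional monochromatic factor $\varcube[i]$ of $\varcube$, while the other factors are unchanged. Since a monochromatic cube assumes values $0$ and $1$ in a checkerboard pattern at its vertices and $\cube[i]$ is a face of $\varcube[i]$, the face $\cube[i]$ always contains a vertex where $\lift f$ is $0$ (indeed, it contains a minimum of $\lift{f_i}$ by \cref{lemma:face-link-monochromatic}); hence by \cref{sum-decomposition} $\barycentre\cube$ is a minimum of $\restrict{\lift\baryf}{\varcube}$, so $\barycentre\varcube\in\coflinkup\cube$. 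This matches the inherited state, which by definition assigns status $\Out$ to $\facet\cap\face$ in this case. In the second case, $\facet$ lies in a move disjoint from all defining facets, so it forms a new monochromatic factor $\varcube[\ell+1]$ of $\varcube$ which is a $1$-cube, namely the edge from $v$ to $v+e_\facet$; and $\cube[\ell+1]$ is the single vertex $v$. Now $v$ is a minimum of $\restrict{\lift\baryf}{\varcube[\ell+1]}$ (equivalently $\barycentre\cube$ is a minimum of $\restrict{\lift\baryf}{\varcube}$, using \cref{sum-decomposition} again) if and only if the edge $[v,v+e_\facet]$ points away from $v$, which by the orientation convention happens exactly when $\status[v]\facet=\Out$. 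But $\status[v]\facet$ is precisely the inherited status of $\facet\cap\face$ in this case. Combining the two cases gives the lemma.

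The main obstacle I expect is bookkeeping: keeping the orientation convention, the normalisation of the lift, and the monochromatic-factor decomposition all consistent, and in particular verifying carefully that in the first case $\cube[i]$ really does always contain a vertex of value $0$ regardless of which vertex $v$ of $\cube$ we started from — this is where one uses that $\face$ being the common intersection forces $\cube$ to be a face of $\varcube$ on which the checkerboard pattern necessarily realises the minimum. None of the individual steps is deep; the content is entirely in \cref{sum-decomposition}, \cref{lemma:face-link-monochromatic} and the edge-orientation rule, so the proof should be short once the case split is set up correctly.
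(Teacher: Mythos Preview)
Your proposal is correct and follows essentially the same approach as the paper's proof: both decompose $\varcube$ into its monochromatic factors, split into the two cases according to whether $\facet$ shares a move with some defining facet of $\face$, and use \cref{sum-decomposition} together with the fact that every positive-dimensional face of a monochromatic cube contains a minimum. The only cosmetic differences are labelling (the paper arranges the factors so that the changing one is $\cube[1]\subset\varcube[1]$, whereas you append a new factor in Case~2) and that the ``positive-dimensional faces contain a minimum'' fact is stated in the paragraph preceding \cref{lemma:face-link-monochromatic} rather than in that lemma itself.
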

\begin{figure}
	\centering
	\setboolean{bad}{false}
	\begin{subfigure}{0.35\textwidth}
		\includestandalone[height=4cm]{pictures/cube-subdivision}
	\end{subfigure}
	\setboolean{bad}{true}
	\begin{subfigure}{0.35\textwidth}
		\includestandalone[height=4cm]{pictures/cube-subdivision}
	\end{subfigure}
	\caption{In this picture, we represented a cube $\varcube$ of the cubulation, which is a cofacet of another cube $Q$ (the bottom edge). The blue vertex is the barycentre of $\pol[v]$, and the blue edge is dual to a facet $\facet$ of $\pol[v]$ with status $\In$. When computing the coface link at the red vertex, which is the barycentre of a face $\face$ of $\pol[v]$, we need to know the orientation of the red edge, corresponding to the status of the facet $\varfacet=\facet \cap \face$ of $\face$.
		In the picture on the left, the square is coherent, so the orientation of the red edge coincides with the orientation of the blue one: the status of $\varfacet$ coincides with the status of $\facet$. Vice versa, on the right, we have a bad square: in this case, the red edge is always oriented outwards, so the status of $\varfacet$ is always $\Out$, independently of the status of $\facet$. }
	\label{fig:coface-link-status}
\end{figure}

\begin{proof}
	Consider the lift $ \lift f: Q' \to \RR $, chosen such that the minimum is $0$, and denote by $ v' $ the only vertex of $ \varcube $ adjacent to $v$ but not belonging to $ \cube $. By definition, $ \lift f(v') > \lift f(v) $ if and only if the status of $\facet$ in $ \pol $ is $ \Out $.

	Let $ \varcube = \varcube[1] \times \dots \times \varcube[\ell] $ be the decomposition of $Q$ into monochromatic factors, and let $ \cube[i] = \varcube[i] \cap \cube $. We may assume that $ \cube[1] $ has codimension $1$ in $ \varcube[1] $, and that $ \varcube[i]=\cube[i] $ for $ i \geq 2 $. Let $ v_1, v_1' $ be the projections of $ v, v' $ to $ \varcube[1] $.

	Let $ \face = \facet[1] \cap \dots \cap \facet[k] $, where $ \facet[i] $ is a facet of $ \pol[v] $.
	If $\facet$ is in the same move as some $ \facet[i] $, then $ \cube[1] $ is positive dimensional, and therefore contains a minimum of $ \restrict{\lift f}{\varcube[1]} $. This implies that $ \lift f(\barycentre {\cube[1]}) = \epsilon \dim \cube [1] $ and $ \lift f(\barycentre{\varcube[1]}) = \epsilon \dim \varcube[1] $. So $ \barycentre\varcube \in \coflinkup \cube $, and by definition the inherited status is always $ \Out $, independent of the status of $\facet$ in $ \pol $.

	Suppose now that $ \facet $ is in a different move from all the $ \facet[i] $. This means that $ \cube[1] $ is the vertex $ v$, and $ \varcube[1] $ is an edge connecting $ v $ and $ v' $. In this case, it follows that $ \lift f(\barycentre{\varcube[1]})>\lift f(\barycentre {\cube[1]}) $ if and only if $ \lift f(v') > \lift f (v) $. Since $ \lift f $ splits as a sum on all the monochromatic factors, this implies that $ \barycentre \varcube \in \coflinkup \cube $ if and only if the status of $ \facet $ in $ \pol $ is $ \Out $, as desired.
\end{proof}

Combining \cref{prop:visual-link,res:state-inheritance} we are able to compute the coface link from the inherited state.
More precisely, let $v$ be a vertex of the cubulation, and let $ \face $ be a face of $ \pol[v] $, dual to some cube $ \cube $ of $ \cubulation $. The dual polytope of $ \face $, denoted by $\dualface$, is simplicial (since $ \face $ is right-angled), and its vertices inherit a labelling from the inherited state on $ \face $. As we did for $ \pol $, we consider the full subcomplexes $ \dualface_\In $, $ \dualface_\Out $ generated by vertices with label $ \In $ and $ \Out $ respectively. The coface link is naturally isomorphic to the barycentric subdivision of the boundary of $ \dualface $. We have the following.

\begin{proposition}\label{res:legal-inherited-collapsible-coface-link}
	The ascending coface link of $Q$ is the barycentric subdivision of $ \dualface_\Out $. The descending coface link is the regular neighbourhood of $ \dualface_\In $ in $ \boundary\dualface $, that is the full subcomplex of the barycentric subdivision whose vertices are the barycentres of all the simplices that intersect $ \dualface_\In $.
\end{proposition}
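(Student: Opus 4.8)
The plan is to combine the two preceding results — the reduction to cofacets in \cref{prop:visual-link} and the inherited-status criterion in \cref{res:state-inheritance} — to identify exactly which barycentres of cofaces of $\cube$ lie in the ascending (resp.~descending) coface link, and then to translate this vertex set through the duality $\coflink\cube \cong \sd(\boundary\face)\cong \sd(\boundary\dualface)$.

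First I would set up the dictionary. The vertices of $\coflink\cube$ are the barycentres of the proper cofaces of $\cube$; dually these are the barycentres of the proper faces of $\face$, equivalently the barycentres of the proper faces of the cross-polytope $\dualface$. A proper face of $\dualface$ is spanned by a subset $\sigma$ of the vertices of $\dualface$ that contains no antipodal pair; the minimal faces (vertices of $\dualface$) correspond to the cofacets of $\cube$, i.e.~to the facets $\varfacet = \facet\cap\face$ of $\face$, and these carry the inherited labelling. By \cref{res:state-inheritance}, the barycentre of a cofacet $\varcube$ lies in $\coflinkup\cube$ exactly when the corresponding vertex of $\dualface$ has inherited label $\Out$. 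Now apply \cref{prop:visual-link}: a coface barycentre $\barycentre{\varcube}$ lies in $\coflinkup\cube$ iff \emph{every} cofacet of $\cube$ below $\varcube$ has its barycentre in $\coflinkup\cube$. Dually, the face $\sigma$ of $\dualface$ corresponding to $\varcube$ has all of its vertices labelled $\Out$, i.e.~$\sigma$ is a face of $\dualface_\Out$. Hence the vertex set of $\coflinkup\cube$, viewed inside $\sd(\boundary\dualface)$, is precisely the set of barycentres of faces of $\dualface_\Out$; since both are full subcomplexes of $\sd(\boundary\dualface)$ on this vertex set, they coincide, giving $\coflinkup\cube = \sd(\dualface_\Out)$.

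For the descending coface link I would argue symmetrically but using the ``there exists'' clause of \cref{prop:visual-link}: $\barycentre{\varcube}\in\coflinkdown\cube$ iff some cofacet of $\cube$ below $\varcube$ has its barycentre in $\coflinkdown\cube$, which dually says that the face $\sigma$ of $\dualface$ has at least one vertex labelled $\In$, i.e.~$\sigma$ meets $\dualface_\In$. Thus $\coflinkdown\cube$ is the full subcomplex of $\sd(\boundary\dualface)$ spanned by barycentres of simplices of $\boundary\dualface$ that intersect $\dualface_\In$, which is exactly the definition of the regular neighbourhood of $\dualface_\In$ in $\boundary\dualface$ (as in \cite[Chapter 3]{RourkeSanderson}). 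One should also note that since every proper face of $\dualface$ has all its vertices labelled $\In$ or $\Out$, the two vertex sets above partition the vertex set of $\coflink\cube$, consistent with the fact that the ascending and descending links together cover $\link(\barycentre\cube)$ minus the ``horizontal'' part, though here I only need the two identifications.

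The main obstacle is bookkeeping rather than conceptual: one must make sure the duality $\boundary\face \leftrightarrow \boundary\dualface$ is compatible at the level of barycentric subdivisions (faces of $\face$ of codimension $j$ correspond to faces of $\dualface$ of dimension $j-1$, reversing inclusions), so that ``$\barycentre{\varcube}$ lies below $\barycentre{\varvarcube}$'' translates correctly into ``the vertex of $\dualface$ is contained in the face of $\dualface$'' — this is the inclusion-reversal already exploited in \cref{res:face-link-product}. Once that correspondence is pinned down, the statement is a direct transcription of \cref{prop:visual-link,res:state-inheritance}.
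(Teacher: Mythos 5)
Your proposal is correct and follows essentially the same route as the paper: combine \cref{prop:visual-link} (reduce to cofacets) with \cref{res:state-inheritance} (the cofacet criterion in terms of inherited status), and translate through the duality identifying $\coflink\cube$ with $\sd(\boundary\dualface)$. The paper's proof is more terse, simply fixing a simplex $\sigma$ of $\boundary\dualface$ and observing the two cases directly, but the content and the two cited lemmas are the same.
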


\begin{proof}
	Fix a simplex $ \sigma $ of the boundary of $ \dualface $; this corresponds to some cube $ \varcube \supset \cube $ in the cubulation. If all the vertices of $ \sigma $ have inherited status $\Out$, by \cref{res:state-inheritance} all cofacets $ \cube \subset \varvarcube \subseteq \varcube $ have their barycentres in the ascending coface link, and by \cref{prop:visual-link} this implies that $ \barycentre\varcube $ also belongs to the ascending coface link.

	Vice versa, if one of the vertices of $ \sigma $ has status $ \In $, then we can find a cofacet $\cube \subset \varvarcube \subseteq \varcube $ whose barycentre is in the descending coface link, and therefore by \cref{prop:visual-link} we have that $ \barycentre\varcube \in \coflinkdown\cube$.
\end{proof}

\begin{corollary}\label{res:collapsible-inherited-state}
	If the inherited state on $ \face $ is $m$-legal, then both $ \coflinkup \cube $ and $ \coflinkdown \cube $ are $ m $-connected. If it is $ \infty $-legal, they are collapsible.
\end{corollary}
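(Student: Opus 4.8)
The plan is to deduce the corollary directly from \cref{res:legal-inherited-collapsible-coface-link}, which already expresses $\coflinkup\cube$ and $\coflinkdown\cube$ in terms of the simplicial complexes $\dualface_\Out$ and $\dualface_\In$, together with the observation that, by definition, the inherited state on $\face$ is $m$-legal precisely when both $\dualface_\Out$ and $\dualface_\In$ are $m$-connected, and $\infty$-legal precisely when both are collapsible. So everything reduces to checking that the two operations occurring in \cref{res:legal-inherited-collapsible-coface-link} — barycentric subdivision, and passing to a regular neighbourhood — preserve $m$-connectedness and collapsibility.

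First I would treat the ascending coface link. By \cref{res:legal-inherited-collapsible-coface-link}, $\coflinkup\cube = \sd(\dualface_\Out)$, which is homeomorphic to $\dualface_\Out$; hence it is $m$-connected as soon as $\dualface_\Out$ is, since $m$-connectedness is a homotopy invariant. For collapsibility I would use that a barycentric subdivision refines any elementary collapse into a finite chain of elementary collapses, so that a collapse of $\dualface_\Out$ to a point yields a collapse of $\sd(\dualface_\Out)$ to a point; a reference for this is \cite[Chapter 3]{RourkeSanderson}.

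Next, for the descending coface link: \cref{res:legal-inherited-collapsible-coface-link} identifies $\coflinkdown\cube$ with the derived (regular) neighbourhood of $\dualface_\In$ in $\boundary\dualface$, i.e.\ the full subcomplex of $\sd(\boundary\dualface)$ spanned by the barycentres of the simplices that meet $\dualface_\In$. I would then invoke the standard fact that such a derived neighbourhood collapses onto $\sd(\dualface_\In)$ (again \cite[Chapter 3]{RourkeSanderson}); this in particular gives a deformation retraction onto $\dualface_\In$, so $\coflinkdown\cube$ is $m$-connected whenever $\dualface_\In$ is, and, composing this collapse with the subdivided collapse of $\dualface_\In$ from the previous step, it collapses to a point whenever $\dualface_\In$ is collapsible. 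Assembling the $\Out$ and $\In$ statements then gives the corollary.

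The genuinely non-formal inputs are only the two facts quoted from \cite{RourkeSanderson} — that barycentric subdivision preserves collapsibility, and that a derived neighbourhood of a full subcomplex collapses onto (a subdivision of) that subcomplex — so the main point to be careful about is stating these in the correct simplicial generality and verifying their hypotheses, in particular that $\dualface_\In$ is a \emph{full} subcomplex of $\boundary\dualface$, which holds by its very construction. Everything else is unwinding definitions.
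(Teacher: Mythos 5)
Your proof is correct and takes essentially the same route as the paper: reduce to \cref{res:legal-inherited-collapsible-coface-link} and then transfer $m$-connectedness and collapsibility across barycentric subdivision and the regular-neighbourhood collapse. You are somewhat more explicit than the paper in justifying that those two operations preserve the relevant properties, but this is a matter of detail rather than a different argument.
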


\begin{proof}
	By \cref{res:legal-inherited-collapsible-coface-link}, we know that $ \coflinkup \cube $ and $ \coflinkdown \cube $ collapse on the barycentric subdivision of $ \dualface_\Out $ and $ \dualface_\In $ respectively. If the inherited state is $m$-legal (resp.~$ \infty $-legal), then those are by definition $ m $-connected (resp.~collapsible), and so are $ \coflinkup \cube $ and $ \coflinkdown \cube $.
\end{proof}

We conclude by proving \cref{thm:inherited-state-collapsibility}.

\begin{proof}[Proof of \cref{thm:inherited-state-collapsibility}]
	We prove that the map $ f \colon \cubulation \to S^1 $ constructed in this section satisfies the requirements of the theorem. Indeed, consider a face $ \face[v] $ of $ \pol[v] $, dual to some cube $ \cube $ of $\cubulation$.

	If $ \face[v] $ is good, then by \cref{res:good-cube-collapsible-face-link} both $ \flinkup \cube $ and $ \flinkdown \cube $ are collapsible, and therefore also $ \linkup {\barycentre \cube} $ and $ \linkdown {\barycentre \cube} $ are.

	If the inherited state $\inheritedstate[v] $ on $ \face[v] $ is $ m $-legal (resp.~totally legal), then by \cref{res:collapsible-inherited-state} both $ \coflinkup \cube $ and $ \coflinkdown \cube $ are $m$-connected (resp.~collapsible), and so are $ \linkup {\barycentre \cube} $ and $ \linkdown {\barycentre \cube} $.
\end{proof}

\def\polytopename{P^6}
\def\truncpolytopename{\bar P^6}

\section{The polytope $P^6$}\label{sec:P6}
In this section, we introduce the $6$-dimensional polytope $P^6$ to which the L\"obell construction will be applied, and describe its combinatorial structure. In the latter part of the section, we present an initial state, and the set of moves that allow us to construct the Bestvina-Brady function using the theory described in Section \ref{sec:bary-sub}.

The polytope $P^6$ belongs to a particular sequence of polytopes, and shares many properties with the other members of its family. For this reason, we start by describing this sequence, in a wider context.

\subsection{Potyagailo-Vinberg polytopes}
There is a remarkable sequence of right-angled hyperbolic polytopes $P^n \subset \Hyp n$, for $n=3, \dots, 8$, which was first considered by Agol, Long, and Reid \cite{AgolLongReid}. They are obtained by considering the star of a vertex in a tessellation of $\HH^n$ by copies of a suitable Coxeter simplex. The combinatorics of these polytopes was later studied by Potyagailo and Vinberg \cite{PotyagailoVinberg}, and Everitt, Ratcliffe, and Tschantz \cite{EverittRatcliffeTschantz}; in particular, it was shown that they are dual to some semi-regular Euclidean polytopes discovered by Gosset \cite{Gosset} in 1900. These polytopes were already used in \cite{IMMAlgebraicFibering}, to construct algebraic fibring of some hyperbolic manifolds, and in particular the polytope $P^5$ is the one used in \cite{IMMHyperbolic5Manifold} to construct a hyperbolic $5$-manifold fibring over the circle.

We summarize in the next paragraphs the properties of the polytopes that are needed for this paper, and we refer to the previous sources \cites{EverittRatcliffeTschantz, PotyagailoVinberg} for more detailed information about the polytopes, and \cite{IMMHyperbolic5Manifold} for the particular use in this context.

It is worth noting that each $k$-dimensional face of $P^n$ is isometric to the polytope $P^k$, so these polytopes are closely related to each other. For this reason, even though the main protagonist of our construction is $P^6$, we start our analysis from the $5$-dimensional polytope $P^5$.

\subsection{The polytope $P^5$}
The polytope $P^5$ has $16$ facets, $16$ real vertices and $10$ ideal vertices. Each facet is isometric to the polytope $P^4$, and each real vertex is opposed to a facet. It is dual to the Gosset polytope $1_{21}$.

It is possible to visualize $P^5$ inside the Klein model for the hyperbolic space $\HH^5$, as the intersection of the $16$ half-spaces $$\pm x_1 \pm x_2 \pm x_3 \pm x_4 \pm x_5\leq 1$$ having an even number of minus signs. The $10$ ideal vertices are precisely the points
$$(\pm 1,0,0,0,0), \ (0, \pm 1,0,0,0), \ (0,0,\pm 1,0,0),  \ (0,0,0,\pm 1,0), \ (0,0,0,0,\pm 1);$$
while the real vertices are the points of the form $\left(\pm \frac13, \pm \frac13, \pm \frac13, \pm \frac13, \pm \frac13\right)$ with an odd number of minus signs.
Each real vertex is opposed a facet.

In this simple description, all its isometries are the maps of the form
$$(x_1, x_2, x_3, x_4, x_5) \mapsto (\pm x_{\sigma(1)}, \pm x_{\sigma(2)}, \pm x_{\sigma(3)}, \pm x_{\sigma(4)}, \pm x_{\sigma(5)})$$
where $\sigma\in S_5$ is any permutation, and the number of minus signs is even. The isometry group $\Isom(P^5)$ has therefore order $2^4\cdot 5!=1920$.

Since there is an even number of minus signs, each isometry is determined by the first four coordinates of the images. We look at $\RR^5$ as $\RR^4\times \RR$, and interpret $\RR^4$ as the quaternion space $ \HH $. This point of view allows us to better describe two useful subgroups of $\Isom(P^5)$, already considered in \cite{IMMHyperbolic5Manifold}, that are $Q_8$ and $R_{16}$. Here we summarize their definitions and properties.

The group $Q_8$ is the quaternion group $\{\pm 1, \pm i, \pm j, \pm k\}$. It acts on the quaternion space $\HH=\RR^4$ by left multiplication, and this action can be extended trivially on $\RR^4\times \RR$. Since it preserves all the half-spaces defining $P^5$, it preserves the polytope, so it can be seen as a subgroup of $\Isom(P^5)$.

Among the isometries of $P^5$, we consider the involution $\iota$ given by
$$\iota \colon (x_1, x_2, x_3, x_4, x_5) \mapsto (x_1, -x_2, -x_4, -x_3, -x_5)$$
One may verify that $\iota$ normalizes the subgroup $Q_8$, and that the group $R_{16}$ generated by $Q_8$ and $\iota$ has order $16$. Moreover, $R_{16}$ acts transitively on the facets of $P^5$.

Finally, we are interested the binary tetrahedral group $Q_8 \cup \{ \frac{1}{2}(\pm 1\pm i\pm j\pm k) \}$, which has $24$ elements. While it could be seen as a subgroup of $\Isom(P^5)$ in the same way $Q_8$ can, we will not need it in this paper. In fact, it will be just used as a set of labels, to encode some combinatorial properties of the polytopes; to simplify the notation, we will use $T_{24}=Q_8 \cup \{(\pm 1\pm i\pm j\pm k) \}$, omitting the $\frac12$. The action of $R_{16}$ on $ \HH \times \RR $ restricts naturally to an action on $ T_{24} \subset \HH \subset \HH \times \RR $.

\subsection{The polytope $P^6$}
The polytope $P^6$ has $27$ facets (each isometric to $P^5$), $72$ real vertices and $27$ ideal vertices. It is dual to the Gosset polytope $\gosset$. Both $P^6$ and $\gosset$ have many truly remarkable combinatorial properties and symmetries, which we are going to exploit in this paper. For example, the $1$-skeleton of $\gosset$ is the configuration graph of the $27$ lines in a general cubic surface, see \cite{Coxeter27rette}.

The combinatorial structure of $P^6$ is described in \cite{EverittRatcliffeTschantz} by associating a vector of $\RR^7$ to each facet. These vectors represent the unitary (with respect to the Lorentzian metric) normal vector to each facet, in one particular embedding of $P^6$ into the hyperboloid model of $\HH^6$. The vectors are listed in \cref{table:G6}. The convex hull of the $27$ points in $\RR^7$ represented by these vectors is the Gosset polytope $\gosset$, dual to $P^6$.

From these vectors, one can read the adjacency graph of the facets (or equivalently, the $1$-skeleton of $\gosset$) in the following way: two facets are adjacent if and only if the Lorentzian scalar product of their associated vector is zero.

We now present a labelling of the facets of $P^6$, which better highlights the symmetries coming from $P^5$. The $27$ facets will be labelled by using the group $T_{24}$, together with the symbols $A$, $B$ and $C$. %

We assign a $1-1$ correspondence between the $27$ labels and the $27$ facets, such that the following properties hold:

\begin{itemize}
	\item Facets with label $ A,B,C $ are pairwise disjoint.
	\item Two facets with label in $ T_{24} $ are adjacent if and only if their Euclidean scalar product is non-negative.
	\item The facet $A$ is adjacent to exactly all facets with label $ \pm 1 \pm i \pm j \pm k $.
	\item The facet $B$ (resp.~$C$) is adjacent to all the facets with label in $Q_8$ and to all the facets whose label $ \pm 1 \pm i \pm j \pm k $ has an even (resp.~odd) number of minus signs.
\end{itemize}

In \cref{table:G6} we provide the explicit mapping between this labelling, and the description of the facets of $P^6$ given in \cite{EverittRatcliffeTschantz} using vectors.
\smallskip

The heuristic idea behind the $T_{24}$-labelling is the following. We start by choosing any facet of $P_6$, and label it with $A$. There are $16$ facets adjacent to $A$, in the shape of a $P^5$; we use elements of the type $ \pm 1 \pm i \pm j \pm k $ to label all of them. The correspondence we chose stems from the fact that each facet of $P^5$ corresponds to a hyperplane $\pm x_1 \pm x_2 \pm x_3 \pm x_4 \pm x_5 = 1$ with an even number of minus signs, and is therefore determined by its first four signs. These four signs are used as a choice for the label $ \pm 1 \pm i \pm j \pm k $.

Each of the remaining $10$ facets of $P^6$ is adjacent to $8$ facets adjacent to $A$, and to $8$ facets far from $A$. With a careful look, one can notice that the adjacency pattern of these facets is related to the arrangement of the $10$ ideal vertices of $P^5$. Indeed, one can associate an ideal vertex of $P^5$ to each of these $10$ facets in such a way that a facet close to $A$ is adjacent to a facet far from $A$ if and only if its corresponding facet of $P^5$ is incident to the corresponding ideal vertex of $P^5$. Moreover, adjacency between facets far from $A$ can be read as adjacency of the corresponding ideal vertices in the $1$-skeleton of $P^5$.
The ideal vertices of $P^5$ correspond to the vectors of the canonical basis of $\RR^5$ and their opposites. The eight vectors that have fifth coordinate equal to zero get the label in $Q_8$ representing the quaternion in the identification $\RR^4=\HH$, while the vertices corresponding to the vectors $(0,0,0,0,\pm 1)$ get labels $B$ and $C$.

From now on, we will always refer to facets of $P_6$ by using their label.
\smallskip

\begin{table}
	\centering

	\begin{tabular}{C|C||C|C}
		\textrm{Vector}  & \textrm{Label}                    & \textrm{Vector}  & \textrm{Label}       \\
		\hline \hline    & \vspace{-10pt}                    &                  &                      \\
		(0,0,0,0,0,-1,0) & A                                 &                  &                      \\
		\hline           & \vspace{-10pt}                    &                  &                      \\
		(0,0,0,0,-1,0,0) & \hphantom{-} 1+i+j+k \hphantom{-} & (1,1,1,1,1,0,2)  & -1-i-j-k             \\
		(1,1,0,0,0,0,1)  & \hphantom{-} 1+i-j-k \hphantom{-} & (1,0,1,0,0,0,1)  & \hphantom{-}1-i+j-k  \\
		(1,0,0,1,0,0,1)  & \hphantom{-} 1-i-j+k \hphantom{-} & (1,0,0,0,1,0,1)  & \hphantom{-}1-i-j-k  \\
		(0,1,1,0,0,0,1)  & -1+i+j-k             \hphantom{-} & (0,1,0,1,0,0,1)  & -1+i-j+k             \\
		(0,1,0,0,1,0,1)  & -1+i-j-k             \hphantom{-} & (0,0,1,1,0,0,1)  & -1-i+j+k             \\
		(0,0,1,0,1,0,1)  & -1-i+j-k             \hphantom{-} & (0,0,0,1,1,0,1)  & -1-i-j+k             \\
		(-1,0,0,0,0,0,0) & -1+i+j+k             \hphantom{-} & (0,-1,0,0,0,0,0) & \hphantom{-}1-i+j+k  \\
		(0,0,-1,0,0,0,0) & \hphantom{-} 1+i-j+k \hphantom{-} & (0,0,0,-1,0,0,0) & \hphantom{-} 1+i+j-k \\
		\hline           & \vspace{-10pt}                    &                  &                      \\
		(1,0,0,0,0,1,1)  & 1                                 & (0,1,1,1,1,1,2)  & -1                   \\
		(0,1,0,0,0,1,1)  & i                                 & (1,0,1,1,1,1,2)  & -i                   \\
		(0,0,1,0,0,1,1)  & j                                 & (1,1,0,1,1,1,2)  & -j                   \\
		(0,0,0,1,0,1,1)  & k                                 & (1,1,1,0,1,1,2)  & -k                   \\
		(0,0,0,0,1,1,1)  & C                                 & (1,1,1,1,0,1,2)  & B                    \\
	\end{tabular}
	\caption{The explicit labelling of the facets of $P^6$ using $T_{24}\cup\{A,B,C\}$, with respect to the description with associated vectors.}
	\label{table:G6}
\end{table}

\newcommand{\Rsixteen}{R_{16}}
\newcommand{\bintetra}{T_{24}}
\newcommand{\invol}{\iota}
The group $\Rsixteen$, acting on $P^5$ by isometries, also acts on the facet $A$ (which is indeed a copy of $P^5$), and the isometries extend uniquely to isometries of $P^6$ in a way that is encoded by the $T_{24}$-labelling.

To see this, recall that $\Rsixteen$ acts naturally on the labels in $T_{24}$, as they are elements of $ \HH $. By the way labels are defined, the labelling is $\Rsixteen$-equivariant, meaning that the facet with label $q$ is sent by $ \phi \in \Rsixteen $ to the facet with label $ \phi(q) $. To be precise, we also need to define the action of $\Rsixteen$ on the labels $ A,B,C $: to get equivariance, we let $Q_8\subset \Rsixteen$ act on them trivially, and we let $ \invol $ act by fixing $A$ and swapping $ B $ and $C$. This is coherent with the fact that $\Rsixteen$ acts by isometries on the facet $A$ by definition, so it sends $A$ to itself.

\subsection{The set of moves}

\newcommand{\lab}{r}

To define the moves on the facets with label in $\bintetra$, we consider the action of $Q_8$ by left multiplication: this partitions $\bintetra$ into three orbits. We choose a base point in each: in particular, we pick $ 1 $, $ 1-i+j-k $, and $ 1+i+j-k $. They are pairwise adjacent and they are invariant, as a set, under the involution $ \invol $. Every $ t \in T_{24} $ is of the form $ q \cdot t' $, with $t'$ one of the base points and $ q \in Q_8 $. We define $ \lab\colon T_{24} \to Q_8 $ by setting $ \lab(t) = q $: note that $r$ is \emph{not} a group homomorphism.

The facets with label in $\bintetra$ are then subdivided into four sets of six facets each, that are the preimages of $ \{\pm 1\} , \{\pm i\} , \{\pm j\} , \{\pm k\} $ via $ \lab $; we let each of these four sets define a move. The explicit values of $ \lab(t) $ are computed in \cref{table:r-of-t}.

We define a fifth move to be the subset containing the triple $\set{A,B,C}$. Summing up, we have $4$ moves containing $6$ facets each, and another move containing $3$ disjoint facets.

\begin{table}
	\centering
	\begin{tabular}{RRR|C}
		\multicolumn{3}{C|}{t} & r(t)                                 \\ \hline
		1,                     & 1-i+j-k,  & 1+i+j-k  & \hphantom{-}1 \\
		-1,                    & -1+i-j+k, & -1-i-j+k & -1            \\\hline
		i,                     & 1+i+j+k,  & -1+i+j+k & \hphantom{-}i \\
		-i,                    & -1-i-j-k, & 1-i-j-k  & -i            \\\hline
		j,                     & -1-i+j+k, & -1-i+j-k & \hphantom{-}j \\
		-j,                    & 1+i-j-k,  & 1+i-j+k  & -j            \\\hline
		k,                     & 1-i-j+k,  & 1-i+j+k  & \hphantom{-}k \\
		-k,                    & -1+i+j-k, & -1+i-j-k & -k            \\
	\end{tabular}
	\caption{The values of $ \lab $ on each facet. The facets in each row are pairwise adjacent; the horizontal lines divide the facets into four moves. The three columns form the three orbits of the action of $Q_8$ by left multiplication.}
	\label{table:r-of-t}
\end{table}

\begin{lemma}
	The map $ \lab\colon T_{24} \to Q_8 $ is $\Rsixteen$-equivariant with respect to the natural action of $\Rsixteen$ on $\bintetra$ and $Q_8$.
\end{lemma}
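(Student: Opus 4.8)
The plan is to reduce the statement to an equivariance check on the three $Q_8$-orbits and their chosen base points. Recall that $r$ is defined by writing each $t \in T_{24}$ uniquely as $t = q \cdot t'$ with $q \in Q_8$ and $t'$ one of the base points $1$, $1-i+j-k$, $1+i+j-k$, and setting $r(t) = q$. First I would record the key structural fact that makes everything work: the set $B = \{1,\ 1-i+j-k,\ 1+i+j-k\}$ of base points is preserved (setwise) by the involution $\iota$, as already noted in the construction, and of course it is fixed pointwise by no element of $Q_8 \setminus \{1\}$ since the three $Q_8$-orbits are disjoint. Since $R_{16}$ is generated by $Q_8$ and $\iota$, it suffices to check equivariance separately for elements of $Q_8$ and for $\iota$.

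For an element $p \in Q_8$ acting by left multiplication, this is essentially immediate: if $t = q \cdot t'$ with $t'\in B$, then $p \cdot t = (p q)\cdot t'$, and since $pq \in Q_8$ and $t' \in B$ this is exactly the canonical decomposition of $p\cdot t$, so $r(p \cdot t) = pq = p \cdot r(t)$, where on the right $p$ acts on $Q_8$ by left multiplication — which is precisely the natural action of $p \in R_{16}$ on $Q_8$. For $\iota$, I would use that $\iota$ normalizes $Q_8$ (stated in the $P^5$ discussion), so conjugation by $\iota$ is an automorphism of $Q_8$; moreover $\iota$ permutes $B$. Writing $t = q\cdot t'$, we get $\iota(t) = \iota(q)\,\iota(t') = \bigl(\iota q \iota^{-1}\bigr)\cdot \iota(t')$; since $\iota q \iota^{-1} \in Q_8$ and $\iota(t') \in B$, this is the canonical decomposition of $\iota(t)$, hence $r(\iota(t)) = \iota q \iota^{-1}$. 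On the other hand the natural action of $\iota$ on $Q_8$ (as a subgroup of $R_{16}$, under the isometry action, or equivalently as left-multiplication translates conjugated by $\iota$) is exactly $q \mapsto \iota q \iota^{-1}$, so $r(\iota(t)) = \iota \cdot r(t)$ as required.

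The one point that needs genuine care — and which I expect to be the main obstacle — is pinning down what ``the natural action of $R_{16}$ on $Q_8$'' means and checking it is compatible across the two generating types, i.e.\ that the $Q_8$-action (left multiplication) and the $\iota$-action (conjugation) assemble into a genuine $R_{16}$-action, and that this is the same action under which $r$ is claimed equivariant. This is really the assertion that the map $R_{16} \to \operatorname{Sym}(Q_8)$ sending $q \mapsto (\text{left mult.\ by }q)$ and $\iota \mapsto (\text{conj.\ by }\iota)$ respects the relations of $R_{16}$; equivalently, that for $q \in Q_8$ one has $\iota q \iota^{-1}$ acting on $Q_8$ the same way whether computed as the image of the element $\iota q \iota^{-1} \in Q_8$ (left multiplication) or via $\iota \circ q \circ \iota^{-1}$ in $\operatorname{Sym}(Q_8)$. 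I would verify this directly: $(\iota \circ L_q \circ \iota^{-1})(x) = \iota(q\,\iota^{-1}(x))$, and one must compare with $L_{\iota q \iota^{-1}}(x) = (\iota q\iota^{-1})x$ — these agree precisely because $\iota$ is a ring (not just group) automorphism of the quaternions restricted to $T_{24}$, so $\iota(q\,\iota^{-1}(x)) = \iota(q)\,x = (\iota q \iota^{-1})\iota\iota^{-1}(x)\cdot$, and unwinding this identity is the only computation of substance. Once this compatibility is established, the two cases above combine to give equivariance on all of $R_{16}$, completing the proof. As a concrete sanity check one can also simply read off both sides from \cref{table:r-of-t}: $R_{16}$ acts transitively on the facets with label in $T_{24}$ and the horizontal blocks of the table are permuted in a way that matches the action of $R_{16}$ on $Q_8 = \{\pm 1,\pm i,\pm j,\pm k\}$.
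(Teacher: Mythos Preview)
Your proof is correct and follows the same two--generator split as the paper: check equivariance for $Q_8$ (immediate from the definition of $r$) and then for $\iota$ (using that $\iota$ preserves the set of base points). The one place you work harder than necessary is your third paragraph: the paper sidesteps the whole compatibility discussion by observing directly that $\iota$ is a \emph{ring} automorphism of $\HH$, so $\iota(q' t') = \iota(q')\,\iota(t')$ with $\iota(q')\in Q_8$ and $\iota(t')\in B$, giving $r(\iota(t)) = \iota(q') = \iota(r(t))$ in one line. Since the ``natural action'' of $R_{16}$ on both $T_{24}$ and $Q_8$ is by definition the restriction of its action on $\HH\subset\HH\times\RR$, there is no separate well-definedness issue to resolve---your conjugation description $q\mapsto \iota q\iota^{-1}$ and the paper's $q\mapsto \iota(q)$ are the same map, precisely because $\iota$ is multiplicative.
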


\begin{proof}
	Suppose that some facet has label $ t $ with $ \lab(t)=q' $; be definition $ t=q' \cdot t' $, with $t' \in \{1, 1-i+j-k, 1+i+j-k\} $.

	If $ q \in Q_8 $, then $\lab$ satisfies by definition $ \lab(q \cdot t) = q \cdot \lab(t) $. On the other hand, one can check that $ \invol $ is an automorphism of $\HH$, so we have that $ \invol(t)=\invol(q' \cdot t')= \invol(q') \cdot \invol(t') $, and since the base points are invariant under $ \invol $ we obtain $ r(\invol(t)) = \invol(q') = \invol(r(t))$, as desired.
\end{proof}

\begin{corollary}\label{res:r16-preserves-moves}
	The action of $\Rsixteen$ on $P^6$ by isometries preserves the set of moves as a partition.
\end{corollary}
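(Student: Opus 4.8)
The plan is to combine the $\Rsixteen$-equivariance of $\lab$ just established with the explicit list of the five moves, treating the four moves supported on $\bintetra$-labelled facets and the move $\set{A,B,C}$ separately.

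First I would recall that the $\bintetra\cup\set{A,B,C}$-labelling of the facets of $P^6$ is $\Rsixteen$-equivariant: an isometry $\phi\in\Rsixteen$ sends the facet labelled $x$ to the facet labelled $\phi(x)$, where $\Rsixteen$ acts on $\bintetra\subset\HH$ through its ($\RR$-linear) action on $\HH$ and on $\set{A,B,C}$ by letting $Q_8$ fix all three and $\invol$ fix $A$ and swap $B$ and $C$. Since $\Rsixteen$ acts on $\bintetra$, the map $\phi$ permutes the $\bintetra$-labelled facets among themselves and fixes the set of facets labelled $A,B,C$ setwise; in particular the fifth move $\set{A,B,C}$ is sent to itself, so only the four $\bintetra$-moves remain to be handled.

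Those four moves are, by definition, the facet-preimages of the sets $\set{\pm 1}$, $\set{\pm i}$, $\set{\pm j}$, $\set{\pm k}$ under $\lab\colon\bintetra\to Q_8$. Using $\Rsixteen$-equivariance of $\lab$, for every $q\in Q_8$ one has $\phi\bigl(\lab^{-1}(\set{\pm q})\bigr)=\lab^{-1}\bigl(\phi(\set{\pm q})\bigr)$, so it suffices to verify that the action of $\Rsixteen$ on $Q_8$ permutes the partition $\set{\set{\pm 1},\set{\pm i},\set{\pm j},\set{\pm k}}$. This is precisely the partition of $Q_8$ into the pairs $\set{x,-x}$, i.e.\ the fibres of the quotient by the centre $\set{\pm 1}$. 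Now every element of $\Rsixteen$ acts on $\HH$ (hence on $Q_8$) by an $\RR$-linear map, being a composite of left multiplications by elements of $Q_8$ and of $\invol$, all of which are $\RR$-linear; therefore it commutes with $x\mapsto -x$ and carries each pair $\set{\pm q}$ to a pair $\set{\pm q'}$. Hence $\phi$ permutes the four $\bintetra$-moves.

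Combining the two cases, $\phi$ carries every one of the five moves onto a move, which is exactly the assertion that $\Rsixteen$ preserves the set of moves as a partition. The only point with any content is the middle step — recognising the move-partition of $Q_8$ as the partition into $\set{\pm q}$-pairs and observing that $\Rsixteen$ acts linearly, so commutes with negation — and I do not expect this to present any real obstacle; everything else is bookkeeping with the $\Rsixteen$-equivariant labelling.
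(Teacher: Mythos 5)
Your proof is correct and follows the same route as the paper: invoke the $\Rsixteen$-equivariance of $\lab$, and observe that the induced action of $\Rsixteen$ on $Q_8$ preserves the partition into pairs $\set{\pm 1},\set{\pm i},\set{\pm j},\set{\pm k}$. The only additions are that you spell out \emph{why} the pair partition is preserved (linearity of the $\Rsixteen$-action, hence commutation with $x\mapsto -x$) and that you explicitly verify the move $\set{A,B,C}$ is preserved, both of which the paper leaves implicit.
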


\begin{proof}
	Follows from the fact that the map $ \lab $ is equivariant, and the action $ \Rsixteen \acts Q_8 $ preserves the partition of $Q_8$ into the pairs $ \set{\pm 1}, \set{\pm i}, \set{\pm j}, \set{\pm k} $.
\end{proof}

\subsection{States}
We now choose an initial state for $P^6$. Similarly to what was done in \cites{BattistaHyperbolic4manifolds,IMMAlgebraicFibering,IMMHyperbolic5Manifold} we choose a particularly symmetric kind of state. Note that, save for the facets $ A,B,C $, all the moves contain $6$ facets that are split in two disjoint triples of pairwise adjacent facets. In fact, one may check that in each of these moves, two facets are adjacent if and only if $ \lab $ assumes the same value on their label.

\begin{definition}
	A state is called \newterm{balanced} if it assigns the same status to $A$, $B$, and $C$, and if for every other move it assigns status $I$ to one of the two triples of pairwise adjacent facets, and $O$ to the facets of the other triple.
\end{definition}

In particular, a balanced state satisfies the property that, given two facets with label in $ \bintetra $ and in the same move, they are adjacent if and only if they have the same status. In particular, every balanced state is compatible.

\begin{lemma}
	If the initial state is balanced, then the orbit consists precisely of all balanced states.
\end{lemma}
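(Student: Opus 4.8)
The plan is to show that the orbit of a balanced state under the $\vectorspace$-action via moves is exactly the set of balanced states. There are two inclusions. First I would check that applying a move to a balanced state yields a balanced state; since the moves generate the action, this shows the orbit of a balanced state is contained in the set of balanced states. Second I would show that any two balanced states are in the same orbit, which gives the reverse inclusion.

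For the first inclusion, fix a balanced state $\state$ and a move $\move$. Recall a move either is $\set{A,B,C}$ or is one of the four six-element sets $\inv{\lab}(\set{\pm q})$ for $q \in \set{1,i,j,k}$. If $\move = \set{A,B,C}$, then $\move$ simultaneously flips the common status of $A$, $B$, $C$, and leaves all other facets untouched; the result still assigns a common status to $A,B,C$ and still has the balanced triple-splitting on every other move, so it is balanced. If instead $\move = \inv{\lab}(\set{\pm q})$, then $\move$ flips the status of exactly the six facets in that one move and fixes everything else; in particular $A,B,C$ keep their common status, and every \emph{other} six-facet move is untouched. For the move $\move$ itself: a balanced state splits $\move$ into the two triples of pairwise-adjacent facets on which $\lab$ is constant (equal to $q$, resp.\ $-q$), assigning $\In$ to one triple and $\Out$ to the other; flipping all six statuses swaps which triple is $\In$ and which is $\Out$, so $\move$ is still split into an all-$\In$ triple and an all-$\Out$ triple. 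Hence the new state is balanced.

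For the second inclusion, I would argue that the balanced states form a single orbit by exhibiting, for any two balanced states $\state, \varstate$, an element of $\vectorspace$ taking one to the other. A balanced state is determined by: (a) the common status of $A,B,C$ (one bit), and (b) for each of the four six-facet moves, a choice of which of the two distinguished triples gets status $\In$ (one bit each) — so balanced states are parametrized by $(\ZZ[2])^5$. The move $\set{A,B,C}$ flips bit (a) and nothing else; each six-facet move $\inv\lab(\set{\pm q})$ flips the corresponding bit in (b) (by the triple-swap observation above) and nothing else. Thus the five moves act on the $5$-tuple of bits as the five coordinate translations, so the action on balanced states is transitive. Combining the two inclusions, the orbit of any balanced state is exactly the set of balanced states.

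The only mildly delicate point — and the one I would state carefully — is the claim that in a six-facet move, two facets are adjacent if and only if $\lab$ takes the same value on their labels, which is what makes "balanced" equivalent to "each such move splits into an all-$\In$ triple and an all-$\Out$ triple." This was already asserted in the paragraph preceding the definition of balanced (and can be read off \cref{table:r-of-t}, where the rows are the maximal cliques and the horizontal lines separate the moves), so I would simply invoke it. Everything else is bookkeeping: the moves act independently on the five "coordinates" of a balanced state, and flipping a coordinate keeps the state balanced.
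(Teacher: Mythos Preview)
Your proof is correct and takes essentially the same approach as the paper: first check that each move preserves balancedness, then observe that a balanced state is determined by one bit per move (the paper phrases this as ``a balanced state is uniquely determined by its restriction to a set containing one facet per move''), so the five moves act independently and transitively on the $(\ZZ[2])^5$ worth of balanced states. Your case analysis for the first inclusion is a bit more explicit than the paper's one-line remark, but the underlying argument is identical.
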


\begin{proof}
	Since inverting the status of all facets in a move preserves the balancedness of a state, the orbit contains only balanced states. Let $s$ be the initial state, and $ s' $ be another balanced state. Moreover, let $ \mathcal F' $ be a set of facets containing exactly one facet for every move.

	By acting with the moves, we can send $s$ to another state $ s'' $ which coincides with $ s' $ on $ \mathcal F' $. Since a balanced state is uniquely determined by its restriction to $ \mathcal F' $, then $ s'' = s' $, and the action is therefore transitive on the balanced states.
\end{proof}

Therefore, we choose as initial state any balanced state, as the resulting orbit does not depend on the chosen balanced state.

By applying the Löbell construction on $P^6$, we obtain a $6$-manifold $ M^6 $, tessellated into copies of $P^6$, with an associated dual cube complex $\cubulation$ that is a spine of $M^6$. The initial state and moves produce a well-defined Bestvina-Brady Morse function $ f \colon \sd \cubulation \to S^1 $ as constructed in \cref{sec:bary-sub}. The rest of the section is devoted to proving the following.

\begin{proposition}\label{res:states-of-p6}
	Let $ P^6_v $ be a copy of $P^6$, with state $ \state[v] $, and let $ F $ be any face of $ P^6_v $ (possibly equal to $P^6_v$ itself). Then at least one of the following holds:
	\begin{itemize}
		\item the face $F$ is good;
		\item the inherited state is totally legal;
		\item the face $F$ is a vertex of $P^6_v$, and it is dual to a $6$-cube of $ \cube $ that decomposes as a product of three monochromatic squares.
	\end{itemize}
\end{proposition}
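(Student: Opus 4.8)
The plan is to run a case analysis over all faces $F$ of $P^6$, organized by the way the set of moves interacts with the defining facets $\deffacets$ of $F$. Recall that $F$ is good exactly when some move contains precisely one facet of $\deffacets$; so I may assume from the start that $F$ is bad, i.e.\ every move contains either zero defining facets, or at least two. Since the four ``$T_{24}$-moves'' contain $6$ facets split into two triples of pairwise-adjacent facets, and the fifth move is the triple $\{A,B,C\}$ of pairwise-disjoint facets, the combinatorics of which facets can simultaneously be defining facets is quite constrained: any two defining facets must be adjacent in $P^6$, so within a single move the defining facets among $T_{24}$-labels must all lie in one of the two adjacent triples, and among $\{A,B,C\}$ no two can appear together. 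This already forces $\deffacets$ to be a union of subsets, one per move, each of which is a subset of a triple of pairwise-adjacent facets (possibly empty). I would first set up this bookkeeping carefully, using \cref{table:r-of-t} and the adjacency description of the facets of $P^6$.

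The next step is to show that, unless $F$ is the bad vertex dual to a product of three monochromatic squares, the inherited state on $F$ is totally legal, i.e.\ both $\dualface_\In$ and $\dualface_\Out$ are collapsible. By \cref{def:inherited-state}, every facet $F' = F\cap F''$ of $F$ where $F''$ lies in a move meeting $\deffacets$ is automatically assigned status $\Out$; only the facets coming from moves \emph{disjoint} from $\deffacets$ retain the original balanced status. So the key observation is: the ``surviving'' balanced structure on $F$ is governed by the moves not touching $\deffacets$, and the massive number of forced-$\Out$ facets tends to make $\dualface_\Out$ a large, highly-connected (indeed collapsible, typically a ball or cone) complex, while $\dualface_\In$ is comparatively small and one checks it is collapsible directly. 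I would exploit that each $k$-face of $P^6$ is isometric to $P^k$, so $\dualface$ is the dual of $P^{6-\ell}$ when $F$ has codimension $\ell$; for small $6-\ell$ these polytopes and their duals are completely explicit, so the verification reduces to a finite check. For the full polytope $F = P^6$ itself (the codimension-$0$ bad face, cf.\ \cref{rem:p-bad}), I would separately verify that the chosen balanced state on $P^6$ is totally legal — this is the single most computational point and I would do it by an explicit collapse of $\stateout$ and $\statein$ inside the dual Gosset polytope $2_{21}$, using the $R_{16}$-symmetry (\cref{res:r16-preserves-moves}) to cut down the work.

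The remaining case is a bad vertex $v$ of $P^6$, dual to a $6$-cube $\cube$. A vertex is the intersection of $6$ facets (since $P^6$ is simple and $6$-dimensional), and badness means every move meets $\{F_{i_1},\dots,F_{i_6}\}$ in $0$ or $\ge 2$ facets. Since there are $5$ moves and $6$ defining facets, and within a move the defining facets lie in one pairwise-adjacent triple (so at most $3$ per move, but we need one move to have $0$), the only way to distribute $6$ facets into moves with each block of size $0$ or $\ge 2$, respecting the triple-adjacency constraint and the fact that the $A,B,C$-move can contribute at most $1$, is: three of the $T_{24}$-moves contribute exactly $2$ facets each, and the other two moves contribute $0$. (A $3+3$ split would need two moves contributing a full adjacent triple each, which I would rule out by checking against the explicit vertex list / the incidence data in \cref{table:G6}; the $A,B,C$ move cannot contribute $2$.) A move contributing exactly two defining facets gives a monochromatic square factor of $\cube$ by the product decomposition discussed before \cref{res:face-link-product}; hence $\cube$ decomposes as a product of exactly three monochromatic squares, which is the third alternative. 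I would finish by confirming with the explicit coordinates of the $72$ real vertices (via the vector data of \cref{table:G6}, which encodes which six facets meet at each vertex) that this distribution is the \emph{only} one realized by an actual vertex, so the trichotomy is exhaustive.

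**Main obstacle.**
The hard part will be the finite but genuinely intricate verification that the balanced inherited states are totally legal across all bad faces — especially the codimension-$0$ case $F = P^6$, where one must exhibit an explicit collapsing sequence for $\stateout$ and $\statein$ as subcomplexes of $\partial(2_{21})$. I expect this is where the $R_{16}$-equivariance of the moves and of the balanced states does the real work: it reduces the number of orbit-representatives of faces that need to be checked by hand, and lets one transport a collapse found in one chart to all symmetric copies. A secondary subtlety is making sure the case analysis on how $\deffacets$ distributes among the moves is genuinely exhaustive — this rests on correctly reading off the adjacency graph of the $27$ facets from \cref{table:G6,table:r-of-t}, and in particular on the fact that within each $6$-facet move the two triples are ``complete'' adjacency-wise while cross-triple adjacency is more restricted.
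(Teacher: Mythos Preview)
Your proposal is correct and follows essentially the same approach as the paper: classify the bad faces of $P^6$ by how the moves partition the defining facets (yielding codimensions $0,2,3,4,6$, with codimension $5$ and the $3{+}3$ vertex pattern ruled out by the explicit adjacency data), use the $R_{16}$-symmetry to reduce to a small number of representative balanced states and faces, and then verify total legality of the inherited state by an explicit collapse in each case. The paper organizes this by first stating a separate classification lemma for bad faces and then running the collapsibility checks case by case, but the substance is the same as what you outline.
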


From this we get:

\begin{corollary}\label{res:f-regular-or-3-critical}
	Every ascending and descending link of $ f \colon \sd \cubulation \to S^1 $ is either collapsible, or collapses on a $2$-sphere.
\end{corollary}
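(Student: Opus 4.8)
The corollary is a direct consequence of \cref{res:states-of-p6} together with the general machinery built in \cref{sec:bary-sub}, so the plan is essentially to combine the two. Fix a copy $P^6_v$ of the polytope and a face $F$ of it, dual to a cube $\cube$ of $\cubulation$; the ascending and descending links of $f$ at the barycentre of $\cube$ are what we must control, and by \cref{res:states-of-p6} we are in one of three cases. In the first case, $F$ is good, so by \cref{res:good-cube-collapsible-face-link} (equivalently \cref{cor:face-link-good-cube-and-bad-squares} applied through \cref{lemma:linkup-join-flinkup-coflinkup}) both $\linkup{\barycentre\cube}$ and $\linkdown{\barycentre\cube}$ are collapsible, hence \emph{a fortiori} collapse to a point, which we may regard as the degenerate case of collapsing to a sphere (or simply note that these vertices will be smoothened away and contribute no critical point).

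In the second case, the inherited state on $F$ is totally legal, i.e.\ $\infty$-legal; then \cref{res:collapsible-inherited-state} gives that $\coflinkup\cube$ and $\coflinkdown\cube$ are collapsible, and by the join decomposition \cref{lemma:linkup-join-flinkup-coflinkup} the full ascending and descending links $\flinkup\cube * \coflinkup\cube$ and $\flinkdown\cube * \coflinkdown\cube$ are joins with a collapsible factor, hence collapsible. So again these barycentres contribute no critical point. (This also covers, via \cref{thm:inherited-state-collapsibility}, any $F$ that happens to be simultaneously good and totally legal.)

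The only remaining case is the third: $F$ is a vertex of $P^6_v$, dual to a $6$-cube $\cube$ which decomposes as a product of three monochromatic squares. Here the coface link is empty (a vertex has no proper cofaces in $\cubulation$, since $\cube$ is top-dimensional among the cubes dual to faces of a single polytope — more precisely $\cube$ is dual to a vertex of $P^6_v$, which has no proper cofaces), so $\linkup{\barycentre\cube} = \flinkup\cube$ and $\linkdown{\barycentre\cube} = \flinkdown\cube$. Now apply the second statement of \cref{cor:face-link-good-cube-and-bad-squares} with $k=3$: since every monochromatic factor of $\cube$ is a square, the ascending and descending face links collapse to the barycentric subdivision of the boundary of a $3$-dimensional cross-polytope, which is PL-homeomorphic to $S^{2}$. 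This is exactly "collapses on a $2$-sphere", completing the case analysis and the proof. The one subtlety worth a sentence in the write-up is to confirm that a vertex of $P^6_v$ really is dual to a cube with no proper cofaces in $\cubulation$, so that the coface link genuinely vanishes and the ascending/descending link is just the face link; this is immediate from the duality between the tessellation of $M^6$ into copies of $P^6$ and the cube complex $\cubulation$, but should be stated explicitly.
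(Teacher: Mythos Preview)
Your proposal is correct and follows essentially the same route as the paper. The only cosmetic difference is that the paper handles your first two cases in one stroke by invoking \cref{thm:inherited-state-collapsibility} directly, whereas you unpack that theorem into its two constituent lemmas (\cref{res:good-cube-collapsible-face-link} and \cref{res:collapsible-inherited-state}); the third case is argued identically, via \cref{cor:face-link-good-cube-and-bad-squares} with $k=3$ after observing that a top-dimensional cube has empty coface link. One small wording issue: your parenthetical ``a vertex has no proper cofaces in $\cubulation$'' is confusing, since a vertex of $\cubulation$ has many cofaces---what you mean is that the $6$-cube $\cube$ is top-dimensional in $\cubulation$ and so has no proper cofaces (dually, the vertex $F$ of $P^6_v$ has no proper \emph{faces}); just say that directly.
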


\begin{proof}
	Let $ \cube $ be a cube of $\cubulation$, $v$ be a vertex of $\cube$, and let $F$ be the face of $P_v$ dual to $\cube$. By \cref{thm:inherited-state-collapsibility}, if the face $F$ satisfies one of the first two conditions, then the ascending and descending links at the barycentre of $\cube$ are collapsible.

	If the third condition holds instead, then $\cube$ is top-dimensional, so the link at the barycentre coincides with the face link, and by \cref{cor:face-link-good-cube-and-bad-squares} the ascending and descending face links collapse on a $2$-sphere.
\end{proof}

\subsection{Computing the states}

We start by analysing the states of the copies of $P^6$. To reduce the computations, we exploit the symmetries given by the group $\Rsixteen$.

\begin{lemma}\label{res:r16-preserves-balanced}
	The action of $ \Rsixteen $ on $P^6$ sends balanced states to balanced states.
\end{lemma}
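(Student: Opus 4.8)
The plan is to unwind the definition of ``balanced'' and use the $\Rsixteen$-equivariance results already established. Recall that a state is balanced if it assigns a common status to $A$, $B$, $C$, and, for each of the four ``quaternionic'' moves, it assigns one status to one of the two triples of pairwise adjacent facets and the opposite status to the other triple. The key structural fact is that, within each such move, two facets are adjacent if and only if $\lab$ takes the same value on their labels; so the two triples in the move with underlying pair $\set{\pm q}$ are exactly $\inv{\lab}(q)$ and $\inv{\lab}(-q)$.

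First I would observe that an isometry $\phi\in\Rsixteen$ permutes the facets according to its action on the label set $\bintetra\cup\set{A,B,C}$, and hence it carries a state $\state$ to the state $\phi\cdot\state$ with $(\phi\cdot\state)(\facet)=\state(\inv\phi\facet)$. To show $\phi\cdot\state$ is balanced when $\state$ is, two things must be checked. For the $A,B,C$ part: $Q_8$ fixes each of $A,B,C$ and $\invol$ fixes $A$ and swaps $B,C$, so $\set{A,B,C}$ is preserved setwise by $\Rsixteen$; since $\state$ assigns them all the same status, so does $\phi\cdot\state$. For the four quaternionic moves: by \cref{res:r16-preserves-moves} the action of $\Rsixteen$ permutes these four moves among themselves, and by the $\Rsixteen$-equivariance of $\lab$ (the lemma just before \cref{res:r16-preserves-moves}) we have $\lab(\phi(t))=\phi(\lab(t))$, so $\phi$ maps the level set $\inv{\lab}(q)$ bijectively onto $\inv{\lab}(\phi(q))$. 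Thus $\phi$ carries each adjacent triple of a move onto an adjacent triple of the image move, and therefore the two triples of any given move of $\phi\cdot\state$ receive opposite statuses (namely the statuses that $\state$ gave to their $\inv\phi$-preimages, which were opposite because $\state$ is balanced).

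Combining these two points, $\phi\cdot\state$ satisfies both defining conditions of a balanced state, so it is balanced.

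I do not expect any real obstacle here: the statement is essentially a bookkeeping consequence of \cref{res:r16-preserves-moves} together with the equivariance of $\lab$ and the elementary description of the $\Rsixteen$-action on $\set{A,B,C}$. The only mild subtlety to get right is that ``balanced'' constrains how statuses are distributed \emph{within each move}, so one must be careful to note that $\Rsixteen$ permutes moves (not fixes them pointwise) and that equivariance of $\lab$ is exactly what guarantees adjacency triples are sent to adjacency triples; once that is said, the conclusion is immediate.
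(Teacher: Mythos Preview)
Your proposal is correct and follows essentially the same approach as the paper: the paper's proof is the one-liner ``this follows from \cref{res:r16-preserves-moves} and the fact that the definition of balanced depends only on the set of moves as a partition,'' and your argument is just an explicit unpacking of that sentence, using the equivariance of $\lab$ (the lemma immediately preceding \cref{res:r16-preserves-moves}) to verify that the adjacent triples within each move are carried to adjacent triples.
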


\begin{proof}
	This follows from \cref{res:r16-preserves-moves} and the fact that the definition of balanced depends only on the set of moves as a partition.
\end{proof}

\begin{lemma}\label{res:r16-almost-transitive}
	Let $ \state, \varstate $ be two balanced states on $P^6$, that coincide on $A$, $B$, and $C$. Then there exists a $ \phi \in \Rsixteen $ that sends $ \state $ to $ \varstate$.
\end{lemma}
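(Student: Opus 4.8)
The plan is to reduce the statement to the transitivity of the natural action of $\Rsixteen$ on the set of transversals of $\set{\pm 1}$ in $Q_8$, and then to verify that transitivity by a short finite computation.

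First I would encode a balanced state combinatorially. By definition, a balanced state $\state$ takes a common value $\state(A)=\state(B)=\state(C)$, and on the facets labelled by $\bintetra$ it is constant on each triple $\lab^{-1}(q)$, $q \in Q_8$, and takes opposite values on $\lab^{-1}(q)$ and $\lab^{-1}(-q)$ — the two triples forming the move through $q$. Hence $\state$ is determined by the pair $\bigl(\state(A),\ \sigma(\state)\bigr)$, where
\[
	\sigma(\state) \coloneq \set{q \in Q_8 : \state \equiv \In \text{ on } \lab^{-1}(q)}
\]
is a subset of $Q_8$ meeting each pair $\set{q,-q}$ in exactly one point. Writing $\mathcal S$ for the set of all such subsets, we have $\abs{\mathcal S} = 2^4 = 16$. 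Since $\state$ and $\varstate$ are balanced and agree on $A,B,C$, it is enough to produce $\phi \in \Rsixteen$ with $\phi\bigl(\sigma(\state)\bigr) = \sigma(\varstate)$.

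Next I would transport the $\Rsixteen$-action. The group $\Rsixteen$ fixes the facet $A$ and preserves $\set{B,C}$, so $\phi \cdot \state$ is again constant on $\set{A,B,C}$ with the same value. Moreover, since the labelling of the $\bintetra$-facets is $\Rsixteen$-equivariant and $\lab$ is $\Rsixteen$-equivariant (both proved above), $\phi$ carries the triple $\lab^{-1}(q)$ onto $\lab^{-1}(\phi(q))$, so a direct check gives $\sigma(\phi \cdot \state) = \phi\bigl(\sigma(\state)\bigr)$, the image of $\sigma(\state)$ under the natural action of $\Rsixteen$ on $Q_8$ — with $Q_8$ acting by left multiplication and $\invol$ by the automorphism $1 \mapsto 1$, $i \mapsto -i$, $j \mapsto -k$, $k \mapsto -j$. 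Thus the lemma reduces to showing that $\Rsixteen$ acts transitively on $\mathcal S$.

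Finally I would carry out this finite check. Writing a transversal as $\set{\varepsilon_0,\ \varepsilon_1 i,\ \varepsilon_2 j,\ \varepsilon_3 k}$ with $\varepsilon_m \in \set{\pm 1}$, and setting $S_0 = \set{1,i,j,k}$, an explicit computation of the eight products $q S_0$, $q \in Q_8$, shows that the $Q_8$-orbit of $S_0$ is precisely the set of the eight transversals with $\varepsilon_0\varepsilon_1\varepsilon_2\varepsilon_3 = 1$. On the other hand $\invol(S_0) = \set{1,-i,-j,-k}$ has $\varepsilon_0\varepsilon_1\varepsilon_2\varepsilon_3 = -1$; as $\invol$ normalises $Q_8$, the $Q_8$-orbit of $\invol(S_0)$ equals $\invol$ applied to the $Q_8$-orbit of $S_0$, hence is the complementary set of eight transversals. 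Therefore $\mathcal S$ is a single $\Rsixteen$-orbit, which proves the lemma; since $\abs{\mathcal S} = \abs{\Rsixteen} = 16$, the action is in fact simply transitive, so the element $\phi$ is unique. The only delicate point is this last computation, where one must handle the quaternion arithmetic and the action of $\invol$ on $\HH$ carefully; the rest is formal.
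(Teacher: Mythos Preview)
Your proof is correct. The encoding of balanced states (with prescribed value on $A,B,C$) by transversals $\sigma(\state)\in\mathcal S$ of $\set{\pm 1}$ in $Q_8$ is exactly right, the equivariance $\sigma(\phi\cdot\state)=\phi(\sigma(\state))$ follows from the $\Rsixteen$-equivariance of $\lab$, and your orbit computation in $\mathcal S$ is accurate: the $Q_8$-orbit of $S_0$ is the eight transversals with $\varepsilon_0\varepsilon_1\varepsilon_2\varepsilon_3=+1$, while $\invol$ flips this sign (it sends $\set{\varepsilon_0,\varepsilon_1 i,\varepsilon_2 j,\varepsilon_3 k}$ to $\set{\varepsilon_0,-\varepsilon_1 i,-\varepsilon_3 j,-\varepsilon_2 k}$), so $\Rsixteen$ acts simply transitively on $\mathcal S$.

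Your argument differs from the paper's, which instead restricts to the facet $A\cong P^5$ and quotes \cite{IMMHyperbolic5Manifold}*{Proposition 9} (transitivity of $\Rsixteen$ on balanced states of $P^5$): one first finds $\phi$ matching the two states on the sixteen facets adjacent to $A$, and then observes that a balanced state is determined by one facet per move. Your route is self-contained and avoids the external reference, at the price of a short explicit check in $Q_8$; it also yields the slightly sharper conclusion that $\phi$ is unique. The paper's route is shorter to state if one is willing to import the $P^5$ result, and makes the link to the lower-dimensional case explicit.
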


\begin{proof}
	Note that since $\Rsixteen$ fixes $A$, if we restrict the action to $A$ we obtain the same action on $P^5$ described in \cite{IMMHyperbolic5Manifold}.  Moreover, one can check that the restriction of the set of moves on $P^6$ coincides with the set of moves for $P^5$, so the restriction of a balanced state to the facets adjacent to $A$ yields a balanced state for $P^5$.

	By \cite[Proposition 9]{IMMHyperbolic5Manifold}, the action of $\Rsixteen$ on the balanced states of $P^5$ is transitive. Therefore, we may find a $ \phi \in \Rsixteen $ that sends $\state$ to a state that coincides with $ \varstate $ on all the facets adjacent to $A$, and by hypothesis also on $A$ (since $\Rsixteen$ does not change the status of $A$).

	The conclusion follows using that $ \phi(\state) $ is balanced by \cref{res:r16-preserves-balanced} and that a balanced state is uniquely determined by choosing the status of one facet in every move.
\end{proof}

In the following proofs, we make recurrent use of the following property of simplicial collapses: if $ v $ is a vertex of a simplicial complex $X$, and $ \link v $ is collapsible, then $X$ collapses on the subcomplex obtained by removing the open star of $v$.

\begin{proposition}\label{res:states-of-p6-are-legal}
	Let $\state$ be a balanced state on $P^6$. Then $ \state $ is totally legal.
\end{proposition}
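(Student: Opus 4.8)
The plan is to show that for a balanced state $\state$ on $P^6$, both $\stateout$ and $\statein$ are collapsible, where these are the full subcomplexes of the dual simplicial polytope $\dualpol$ spanned by the $\Out$- and $\In$-vertices respectively. Since the complement operation on statuses is a symmetry of the situation (swapping $\In$ and $\Out$ and correspondingly $\statein \leftrightarrow \stateout$), and since the isometry group acts — by \cref{res:r16-preserves-balanced,res:r16-almost-transitive} — almost transitively on balanced states, it should suffice to check collapsibility for essentially one representative balanced state (or a small number of them, indexed by the common status of $A,B,C$), and then conclude for all of them by symmetry. So the first step is to fix a convenient balanced state, say the one assigning $\Out$ to $A$, $B$, $C$ and a definite choice of $\In$/$\Out$ triples in each of the four $T_{24}$-moves, and to identify explicitly which of the $27$ vertices of $\dualpol$ lie in $\stateout$.

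The second step is to understand the adjacency structure on those vertices, i.e.\ which subsets of the chosen $\Out$-facets have nonempty common intersection in $P^6$ (equivalently, span a simplex of $\dualpol$). Here I would lean heavily on the combinatorial description given via the $T_{24}\cup\{A,B,C\}$ labelling: the adjacency rules stated before \cref{table:G6} (two $T_{24}$-facets adjacent iff Euclidean scalar product is non-negative; $A$ adjacent to the $\pm1\pm i\pm j\pm k$ facets; $B,C$ adjacent to $Q_8$ and to the even/odd sign facets), together with the fact that $\dualpol$ is the Gosset polytope $2_{21}$, whose face structure is classical. Using these rules one can list the maximal faces of $\stateout$ concretely.

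The third — and I expect main — step is to exhibit an explicit collapsing sequence for $\stateout$. The natural tool, highlighted just before \cref{res:states-of-p6-are-legal}, is the principle that if a vertex $v$ of a simplicial complex $X$ has collapsible link in $X$, then $X$ collapses to $X \setminus \mathrm{st}(v)$; iterating, one peels off vertices one at a time until nothing (or a point) remains. So the task reduces to choosing a good order in which to remove the $\Out$-vertices so that at each stage the link of the removed vertex inside the current complex is collapsible — ideally a cone, i.e.\ the vertex being removed is dominated by (in the same maximal faces as, or strictly inside the star of) another still-present vertex. I would look for a "dominating vertex" among $A$, $B$, or $C$, or among a well-chosen $T_{24}$-facet that is adjacent to all other $\Out$-facets, use it to collapse many vertices at once, and recurse on the (smaller, lower-dimensional) remaining complex; the $P^6 \supset P^5$ nesting, and the fact that the restricted data on the $P^5$-facet $A$ recovers the $P^5$ situation already handled in \cite{IMMHyperbolic5Manifold}, should let me bootstrap from known collapsibility of the $P^5$ states.

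The main obstacle is purely that this is a concrete $27$-vertex computation in $2_{21}$: one must be careful that the chosen elimination order really does keep links collapsible at every stage, since a bad early choice can leave a non-collapsible (even non-simply-connected) residue. I would mitigate this by first reducing via the group action to a single representative, then organising the $\Out$-facets by their $r$-value and by adjacency to $A,B,C$, and exhibiting one carefully checked sequence; the $\In$ side then follows by the $\In\leftrightarrow\Out$ symmetry, or by an entirely analogous (and by balancedness, structurally identical) argument.
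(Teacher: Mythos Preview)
Your proposal is correct and follows essentially the same approach as the paper: reduce via \cref{res:r16-almost-transitive} to two representative balanced states (differing only in the common status of $A,B,C$), then exhibit an explicit collapsing sequence by repeatedly removing vertices whose links are cones. The paper's actual sequence is more direct than your suggested $P^5$-bootstrap: it first collapses $A$, $B$, $C$ (each has a cone link in $\varstateout$) to reduce $\varstateout$ to $\stateout$, then removes one further vertex of $\stateout$ whose link is a cone, after which the remaining complex is itself a cone.
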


\begin{proof}
	By \cref{res:r16-almost-transitive} it suffices to check two balanced states, one that assigns $ \Out $ to $A,B,C$ and one that assigns $ \In $ to the same facets. Therefore, we consider the state $\state$ that assigns status $\Out$ precisely to the facets
	\begin{center}
		\begin{tabular}{RRR}
			1, & 1-i+j-k,  & 1+i+j-k,  \\
			i, & 1+i+j+k,  & -1+i+j+k, \\
			j, & -1-i+j+k, & -1-i+j-k, \\
			k, & 1-i-j+k,  & 1-i+j+k,  \\
		\end{tabular}
	\end{center}
	and the state $ \varstate $ that assigns status $\Out$ to the facets above and to $A, B, C$. By symmetry, we only need to check the collapsibility of the full subcomplexes of $ \gosset $ whose vertices are dual to facets with status $\Out$; we denote them by $ \stateout, \varstateout $ respectively.

	Let us start by showing that $ \varstateout $ collapses to $ \stateout $. Consider the facet $A$, and look at its link inside $ \varstateout $. This is the full simplicial complex generated by the vertices on the second and third column, and it is collapsible since it is a cone on $ 1-i+j+k $. Similarly, the link of $B$ is generated by the vertices on the first and second column, and it is a cone on $ 1+i+j+k $, and the link of $C$, generated by vertices on the first and third column, is a cone on $j$. So we can collapse $A$, $B$, $C$ on their links in $ \varstateout $ and obtain $ \stateout $.

	Now we show that $ \stateout $ is collapsible.
	Consider $ -1-i+j-k $: its link has the vertices $ j $, $ 1-i+j-k $, $ -1-i+j+k $, $ 1+i+j-k $, $ -1+i+j+k $ and $ 1 -i+j+k $; it is a cone on $j$, so we can collapse $ -1-i+j-k $ on it. The remaining vertices all collapse on $ 1+i+j+k $. This concludes the proof.
\end{proof}

Next, we need to check the inherited states. By \cref{thm:inherited-state-collapsibility}, we only need to consider the inherited states of bad faces of $P^6$. For the following, fix a balanced state for $P^6$.

\begin{lemma}\label{res:bad-ridge-collapsible}
	Let $F_1, F_2$ be two adjacent facets in the same move, and let $ F=F_1 \cap F_2 $. Then the inherited state on $F$ is totally legal.
\end{lemma}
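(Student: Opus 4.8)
The statement concerns a ridge $F = F_1 \cap F_2$ where $F_1, F_2$ lie in the same move. Since each $k$-dimensional face of $P^6$ is isometric to $P^k$, the ridge $F$ is a copy of $P^4$, which is dual to the Gosset polytope $0_{21}$ (a cross-polytope-like simplicial complex, or at least something with very explicit combinatorics listed in \cite{EverittRatcliffeTschantz}). The inherited state on $F$ assigns status $\Out$ to every facet $F \cap F'$ whenever $F'$ is in the same move as $F_1$ or $F_2$, and otherwise the status of $F'$. So the plan is: (1) identify the two defining facets $F_1, F_2$ and their move; (2) enumerate the facets $F'$ of $P^6$ adjacent to both $F_1$ and $F_2$, splitting them into those lying in the move of $F_1$ (these become forced $\Out$ in $F$) and those lying in other moves (these keep their balanced status); (3) check that the resulting subcomplexes $F'^*_\In$ and $F'^*_\Out$ of $\boundary (F^*)$ are both collapsible.

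**Reducing the casework.** Checking this for every pair $(F_1, F_2)$ directly would be enormous, so I would first cut down using symmetry, exactly as in the proof of \cref{res:states-of-p6-are-legal}. By \cref{res:r16-preserves-moves}, the group $\Rsixteen$ permutes the moves and (by \cref{res:r16-preserves-balanced}) sends balanced states to balanced states; and by \cref{res:r16-almost-transitive} it acts almost-transitively on balanced states. Within a fixed move, $\Rsixteen$ (or at least its relevant stabiliser) acts on the adjacent pairs inside that move. So I expect only a handful of orbit representatives of pairs $(F_1, F_2)$ to remain — perhaps one per move, or a couple per move depending on how $\Rsixteen$ acts on the two triples of pairwise-adjacent facets within each six-facet move, plus the special move $\{A,B,C\}$ (whose facets are pairwise \emph{disjoint}, so there is no ridge of that type — this case is vacuous, which I should note). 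For each representative I would then read off the adjacency data from \cref{table:G6}/\cref{table:r-of-t}: a facet $F'$ with label in $T_{24}$ is adjacent to another iff their Euclidean scalar product is nonnegative, and the adjacencies with $A, B, C$ are given by the explicit rules listed.

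**The collapsibility check.** Once the inherited state on $F$ is written down explicitly as a labelling of the facets of a $P^4$, the remaining work is to show $F'^*_\In$ and $F'^*_\Out$ collapse to a point. Here I would use the same technique invoked repeatedly above: find a vertex whose link in the subcomplex is a cone (hence collapsible), collapse its open star, and iterate, or directly exhibit a free face and collapse down. Since $F \cong P^4$ has only $10$ facets (as $P^4$ has $10$ facets — this should be stated or recalled), each of $F'^*_\In, F'^*_\Out$ is a full subcomplex of the boundary of a $10$-vertex simplicial polytope, so the check is a short explicit sequence of elementary collapses. I would organize it so that the forced-$\Out$ facets (those in the move of $F_1$) are handled first: they form a recognizable sub-configuration, and collapsing through a cone-point among them is typically immediate.

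**Main obstacle.** The genuine difficulty is not conceptual but bookkeeping: correctly determining, for each orbit representative pair $(F_1,F_2)$, exactly which of the remaining $25$ facets are adjacent to both — and among those, which sit in the same move as $F_1$. A single mis-read adjacency in \cref{table:G6} propagates into a wrong inherited state and a bogus (or impossible) collapse. So the careful step is step (2): cross-checking the adjacency list against both the Euclidean-scalar-product rule for $T_{24}$-labels and the $A/B/C$ rules, and against the constraint that $F$ really is a $P^4$ with the expected face count. The collapsing itself (step 3) should then fall out in a few lines per case, mirroring \cref{res:states-of-p6-are-legal}.
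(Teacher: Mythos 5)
Your plan matches the paper's proof essentially step for step: reduce via the $R_{16}$ symmetry (transitivity of $Q_8$ on rows of the move table plus the involution $\iota$) to two representative pairs, note the $\{A,B,C\}$ case is vacuous since those facets are pairwise disjoint, recognize $F\cong P^4$ dual to the Gosset $0_{21}$ with ten facets, compute the inherited state from the adjacency data and balancedness, and then verify collapsibility of both full subcomplexes by iteratively collapsing away vertices whose links are cones. You leave the explicit collapse sequences unstated, but the paper carries them out in exactly the way you describe, so the approach is the same.
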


\begin{proof}
	Note that $F_1$ and $F_2$ must be on the same row of \cref{table:r-of-t}. Since $Q_8$ acts transitively on the rows, we may assume that $ \lab(F_1)=\lab(F_2)=i $. By using the involution of $ \Rsixteen $, we may also restrict to the case where $ F_1=1+i+j+k $ and $ F_2 = -1+i+j+k $, and the case where $ F_1 = i $ and $ F_2 = 1+i+j+k $.

	Let us start from the first case. The face $F$ is isometric to $P^4$, and its dual is the Gosset polytope $ 0_{21} $ shown in \cref{fig:P4-link}. Here vertices in the same move are depicted with the same colour.

	\newcommand{\Scale}{3}
	\newboolean{otherlink}
	\newboolean{showlinkone}
	\newboolean{showlinktwo}
	\newboolean{showlinkthree}
	\newboolean{showlinkfour}
	\newboolean{showlinkfive}
	\begin{figure}
		\centering
		\includegraphics[height=8cm]{pictures/P4.tex}
		\caption{The dual of the intersection of the facets with label $ \pm1+i+j+k $. Some lines are dotted only to make the picture cleaner.}
		\label{fig:P4-link}
	\end{figure}

	Since we started from a balanced state of $P^6$, the inherited state on $P^4$ has the property that two facets in the same move have the same status if and only if they are adjacent. Moreover, the facet with label $i$ has inherited status $ \Out $, since it belongs to the same move as $ \pm 1 +i+j+k $.

	Let us show the collapsibility of the subcomplex $X$ whose vertices have inherited status $\Out$. We proceed similarly to the proof of \cref{res:states-of-p6-are-legal}: we choose a vertex of $P^4$, and we show that if it has status $\Out$, then its link inside $X$ is collapsible, so we can collapse it on its link. Then we can remove the vertex from $P^4$ as it does not belong to $X$, and repeat the argument with a different vertex, until $X$ becomes a cone.

	\renewcommand{\Scale}{2}
	\begin{figure}
		\centering
		\setboolean{showlinkone}{true}
		\begin{subfigure}{0.45\textwidth}
			\includestandalone[width=\textwidth]{pictures/P4}
		\end{subfigure}\hspace{.5cm}
		\setboolean{showlinkone}{false}
		\setboolean{showlinktwo}{true}
		\begin{subfigure}{0.45\textwidth}
			\includestandalone[width=\textwidth]{pictures/P4}
		\end{subfigure}
		\caption{The links of $A$ and $i$ inside $P^4$: they both form a triangular prism.}
		\label{fig:P4-links-A-i}
	\end{figure}

	Consider the vertex $A$, and suppose it has inherited status $\Out$. Its link inside $P^4$ is the one depicted in \cref{fig:P4-links-A-i}, left. This is a triangular prism dual to $P^3$, with the usual set of moves considered in \cite{BattistaHyperbolic4manifolds}. The balancedness property implies that the two vertices in the same move have opposite status, so the subcomplex of the prism generated by vertices with status $\Out$ is collapsible.
	Therefore, the link of $A$ in $X$ is collapsible.

	Next, we consider the vertex with label $i$, which has inherited status $\Out$; its link is in \cref{fig:P4-links-A-i}, right. By the same argument as above, its link in $X$ is collapsible.

	\renewcommand{\Scale}{2.5}
	\begin{figure}
		\centering
		\setboolean{showlinktwo}{false}
		\setboolean{showlinkthree}{true}
		\begin{subfigure}{0.4\textwidth}
			\includestandalone[width=\textwidth]{pictures/P4}
		\end{subfigure}\hspace{.5cm}
		\setboolean{showlinkthree}{false}
		\setboolean{showlinkfour}{true}
		\begin{subfigure}{0.4\textwidth}
			\includestandalone[width=\textwidth]{pictures/P4}
		\end{subfigure}
		\caption{The links of $1+i-j+k$ and $-1+i+j-k$ inside $P^4$ with the vertices $A$ and $i$ removed.}
		\label{fig:P4-links-three-four}
	\end{figure}

	Then, we consider the vertex $ 1+i-j+k $. Its link is the path shown in \cref{fig:P4-links-three-four}, left. By the balancedness property, $ 1-i+j+k $ and $k$ have the same status, while $ 1+i+j-k $ and $ -1+i-j+k $ have opposite status. By inspecting the four possible cases, the subcomplex of the link whose vertices have status $\Out$ is always collapsible.
	The same applies for the vertex $ -1+i+j-k $, see \cref{fig:P4-links-three-four}, right.

	After removing $ A $, $i$, $ 1+i-j+k $ and $ -1+i+j-k $ we are left with the simplicial complex shown in \cref{fig:P4-link-five}.
	\renewcommand{\Scale}{3}
	\setboolean{showlinkfour}{false}
	\setboolean{showlinkfive}{true}
	\begin{figure}
		\centering
		\includegraphics[width=0.5\textwidth]{pictures/P4.tex}
		\caption{The simplicial complex obtained from $ P^4 $ by removing four vertices.}
		\label{fig:P4-link-five}
	\end{figure}

	Again, by exploiting the balancedness of the state, we know that $j$ has the same status as $ -1-i+j+k $, $k$ has the same status of $ 1-i+j+k $, while $ 1+i+j-k $ and $ -1+i-j+k $ have opposite status. By checking the eight possible configurations, it is easy to check that the subcomplex generated by vertices with status $\Out$ is either a point, a path of length two, or the union of a tetrahedron and a triangle, joined by a common edge. All of them are collapsible, so this concludes the proof that $X$ is collapsible.

	By using the same arguments one can also show the collapsibility of the subcomplex of vertices with status $\In$. The remaining case where $F_1 = i$ and $F_2 = 1+i+j+k$ is also analogous: indeed, the dual of their intersection is the $P^4$ in \cref{fig:other-P4}, and by comparing with \cref{fig:P4-link} it is apparent that it is combinatorially isomorphic to the first case.
	\setboolean{showlinkfive}{false}
	\setboolean{otherlink}{true}
	\begin{figure}
		\centering
		\includegraphics[height=8cm]{pictures/P4.tex}
		\caption{The dual of the intersection of the facets $ i $ and $ 1+i+j+k $.}
		\label{fig:other-P4}
	\end{figure}
\end{proof}

Unlike what happened for $P^5$, here we also have bad faces of codimension higher than $2$. We classify them in the following lemma.

\begin{lemma}
	Let $F$ be a proper bad face of $P^6$. Then $F$ is one of the following:
	\begin{enumerate}
		\item the intersection of two facets in the same move;\label{i:bad-square}
		\item the intersection of three facets in the same move;\label{i:bad-cube}
		\item the intersection of four facets belonging to two moves (two facets per move);\label{i:bad-squarexsquare}
		\item the intersection of six facets belonging to three moves (two facets per move).\label{i:bad-squarexsquarexsquare}
	\end{enumerate}
\end{lemma}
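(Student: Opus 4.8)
The plan is to reformulate the statement in terms of the set $S$ of defining facets of $F$, so that $F = \bigcap_{F_i\in S}F_i$. Since $P^6$ is simple, $F$ has codimension exactly $\abs{S}$, hence $\abs{S}\le \dim P^6 = 6$; moreover any two facets in $S$ are adjacent, because their intersection contains the nonempty face $F$ and is therefore a ridge. By definition $F$ is proper iff $S\ne\emptyset$, and $F$ is bad iff every move meets $S$ in either $0$ or at least $2$ facets. As a first step I would rule out the facets $A$, $B$, $C$: these are pairwise disjoint, so $S$ contains at most one of them, and if it contained exactly one then the move $\set{A,B,C}$ would contribute a single defining facet and $F$ would be good; hence $S$ consists only of facets labelled by $\bintetra$.

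Next I would analyse a single big move $M$. Inside $M$ two facets are adjacent iff $\lab$ takes the same value on their labels, so $M$ splits into two triangles of pairwise-adjacent facets, and pairwise adjacency of $S$ forces $S\cap M$ to lie in one such triangle; combined with badness this gives $\abs{S\cap M}\in\set{0,2,3}$ for every big move $M$. Since $1\le\abs{S}\le 6$, at most three big moves can meet $S$ (three of them already contribute at least $6$, forcing the value $2$ in each), so the multiset of nonzero values $\abs{S\cap M}$ is one of $\set{2}$, $\set{3}$, $\set{2,2}$, $\set{2,3}$, $\set{3,3}$, $\set{2,2,2}$. The first, second, third and sixth of these are exactly cases~\cref{i:bad-square,i:bad-cube,i:bad-squarexsquare,i:bad-squarexsquarexsquare}, so everything comes down to excluding $\set{2,3}$ and $\set{3,3}$.

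Both of the cases to be excluded contain a \emph{full} triangle $T_q\subseteq S$ (three facets of one move on which $\lab$ is constant), so the crux — and the only step I expect to require genuine work — is the claim that \emph{if $S$ contains a full triangle $T_q$, then $S = T_q$}; equivalently, $T_q$ is a maximal clique of the facet-adjacency graph of $P^6$. (Granting this, every facet of $S\supseteq T_q$ is adjacent to the whole of $T_q$, hence lies in $T_q$.) Since $\lab$ is $\Rsixteen$-equivariant and left multiplication by $Q_8$ is transitive on $Q_8$, the subgroup $Q_8\subseteq\Isom(P^6)$ permutes the eight triangles transitively, so it suffices to verify the claim for one triangle, say $T_1 = \set{1,\ 1-i+j-k,\ 1+i+j-k}$ (the set of base points). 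This is a finite check: using that two $\bintetra$-labelled facets are adjacent iff their Euclidean scalar product is non-negative (and that $A$, $B$, $C$ are each non-adjacent to one of the three labels $1$, $1\pm i+j-k$), or simply reading adjacencies off \cref{table:G6}, one verifies that no facet outside $T_1$ is adjacent to all three facets of $T_1$. This is where I expect the bookkeeping to be, but it is entirely mechanical.

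Granting the claim, the possibilities $\set{2,3}$ and $\set{3,3}$ are impossible: each would force a full triangle $T_q\subseteq S$ together with extra facets, contradicting $S=T_q$. In the surviving cases every nonzero $\abs{S\cap M}$ equals $2$, and since $\abs{S}\le 6$ at most three moves meet $S$; thus $\abs{S}$ is $2$, $4$ or $6$ according as one, two or three moves meet $S$, which are precisely cases~\cref{i:bad-square,i:bad-squarexsquare,i:bad-squarexsquarexsquare}. Together with case~\cref{i:bad-cube} (the possibility $\set{3}$), this exhausts all options and proves the lemma.
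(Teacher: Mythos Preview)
Your overall strategy is sound and cleaner than the paper's case-by-case search inside the $P^4$ link: ruling out $A,B,C$, reducing to the multiset of values $\abs{S\cap M}$, and isolating $\set{2,3}$ and $\set{3,3}$ as the only configurations to exclude is all correct. The problem is the key claim you single out.

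The assertion that $T_q$ is a maximal clique in the adjacency graph of $P^6$ is \emph{false}, so the ``mechanical bookkeeping'' you anticipate will not go through. For instance, take $T_1=\set{1,\ 1-i+j-k,\ 1+i+j-k}$ and the facet $j$: the Euclidean scalar products $\scalar j 1=0$, $\scalar j{1-i+j-k}=1$, $\scalar j{1+i+j-k}=1$ are all non-negative, so $j$ is adjacent to every element of $T_1$ and $T_1\cup\set j$ is a $4$-clique. (In fact exactly six facets outside $T_1$ are adjacent to all of $T_1$, two from each of the other three big moves.) Your ``equivalently'' is therefore wrong: the implication ``$S\supseteq T_q \Rightarrow S=T_q$'' uses the badness of $S$ in an essential way and is \emph{not} the same as maximality of $T_q$ as a clique.

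The repair is straightforward. What you actually need is the weaker claim that for every triangle $T_{q'}$ in a move different from that of $T_q$, at most one element of $T_{q'}$ is adjacent to all of $T_q$. Granting this, any move $M$ other than the move of $T_q$ satisfies $\abs{S\cap M}\le 1$ (since $S\cap M$ lies in a single triangle and each of its elements is adjacent to $T_q\subseteq S$), and badness then forces $\abs{S\cap M}=0$; this kills both $\set{2,3}$ and $\set{3,3}$. By $Q_8$-equivariance it suffices to check this for $T_1$, and the computation above (extended to the six triangles $T_{\pm i},T_{\pm j},T_{\pm k}$) shows that each contributes exactly one facet adjacent to all of $T_1$. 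This is the same finite check in spirit, just with the correct target. The paper reaches the same conclusion by a different route: it fixes a monochromatic pair, passes to the $P^4$ link pictured in \cref{fig:P4-link}, and inspects possible extensions for each codimension $k=2,\dots,6$ directly.
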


\begin{proof}
	Bad faces of codimension $k$ of $P^6$ are obtained by choosing a subset of $k$ pairwise adjacent facets so that no move contains exactly one facet from this subset. Clearly, there must be at least a move containing at least two facets of this subset: using the action of $ \Rsixteen $, we may assume that these are $ 1+i+j+k $ and $ -1+i+j+k $, or $ i $ and $ 1+i+j+k $.

	Let us consider the first case: the other $k-2$ facets must form a clique in the graph in \cref{fig:P4-link}.

	\begin{itemize}
		\item If $k=2$, there is only one possibility, and we are in case \cref{i:bad-square}.
		\item If $k=3$, then the only possibility is to choose $i$, as in \cref{i:bad-cube}.
		\item If $ k=4 $, to get a bad face we must choose two adjacent vertices in \cref{fig:P4-link} that are in the same move. There are two ways to do that, by choosing $ j $ and $ -1-i++j+k $, or $ k $ and $ 1-i+j+j $. In both cases the resulting face is as in \cref{i:bad-squarexsquare}.
		\item If $k=5$, we should either choose $i$ and other two vertices in the same move adjacent to it, or three pairwise adjacent vertices all belonging to the same move. However, there are no ways to do this, so there are no codimension $5$ bad faces.
		\item If $k=6$, we should either choose $i$ and a monochromatic triangle, or two monochromatic segments that span a tetrahedron. While there is no monochromatic triangle in \cref{fig:P4-link}, the two pairs $ j $, $ -1-i+j+k $ and $ k $, $ 1-i+j+k $ do satisfy the hypotheses. By intersecting them with $ \pm 1 +i+j+k $, we obtain a codimension $6$ face, i.e.~a vertex, as in \cref{i:bad-squarexsquarexsquare}.
	\end{itemize}

	The case where we consider $ i $ and $ 1+i+j+k $ is the same, by looking at \cref{fig:other-P4}.
\end{proof}

Now we have all the tools to prove \cref{res:states-of-p6}.

\begin{proof}[Proof of \cref{res:states-of-p6}]
	Let $F$ be a bad face of $P^6$. If $F=P^6$, then the inherited state is a balanced state of $P^6$ and is totally legal by \cref{res:states-of-p6-are-legal}. If it has codimension $2$, its inherited state is also totally legal by \cref{res:bad-ridge-collapsible}.

	Suppose that $F$ has codimension $3$, so we may assume it is the intersection of $ i $, $ 1+i+j+k $, $ -1+i+j+k $. The dual of $F$ is the usual triangular prism, with vertices corresponding to $ k $, $ -1+i+j-k $, $ j $, $ 1+i-j+k $, $ -1+i-j+k $, $ 1+i+j-k $. Since the state on $P^6$ is balanced, we get that the inherited state is $ \infty $-legal.

	If $F$ has codimension $4$, we may assume, using $ \Rsixteen $ and the symmetry of \cref{fig:P4-link}, that it is the intersection of $ 1+i+j+k $, $ -1+i+j+k $, $j$, and $ -1-i+j+k $. In this case, $F$ is a hyperbolic triangle with two ideal vertices and a finite one, and its dual is the disjoint union of a segment, with vertices labelled by $ k $ and $ 1-i+j+k $, and a point, with label $ -1+i+j-k $. Using the balancedness, we know that the endpoints of the segment have the same inherited status, which is opposite to the status of the point. Therefore, the inherited state is totally legal.

	If $F$ has codimension $6$ it is a vertex, and it is the intersection of three pairs of facets, each pair corresponding to a move. So its dual is a cube that decomposes as a product of three monochromatic squares, as desired.
\end{proof}

\section{The extended cubulation} \label{sec:cusps}

In the previous section we have constructed a Bestvina-Brady Morse function $ f \colon \cubulation \to S^1 $ with well-behaved ascending and descending links, as described in \cref{res:f-regular-or-3-critical}. We would like to promote this to a perfect circle-valued Morse function on $ \mfld $: to do so, we need to be careful as $ \cubulation $ is not homeomorphic to $\mfld$, but is just a spine of $\mfld$.

By truncating $\mfld$ with a small horosphere around each cusp, we obtain a compact manifold $ \truncmfld $, whose interior is diffeomorphic to $\mfld$, and is tessellated into copies of the truncated polytope $ \truncpol $. This compact manifold carries a cube complex structure $ \extcubulation $, as described in \cite[Section 1.6]{IMMHyperbolic5Manifold}, and $\cubulation$ is the subcomplex of $ \extcubulation $ made by all the cubes contained entirely in the interior of $ \truncmfld $.

Let $D$ be a boundary component of $ \extcubulation $. The smallest subcomplex of $ \extcubulation $ that contains all the cubes intersecting $D$ is a collar, and it is naturally isomorphic as a cube complex to $D \times [0,1]$, where $ D \times \set 0 \subseteq \cubulation $, and $ D \times \set1 \subseteq \boundary \extcubulation $.

We extend $ f \colon \cubulation \to S^1 $ to $ \extf \colon \extcubulation \to S^1 $ by setting $ \extf(x,h) = f(x,0) + h $, where $ (x,h) \in D \times [0,1] $.

We subdivide $\extcubulation$ by taking the barycentric subdivision on $\cubulation$, and subdividing each collar $ D \times [0,1] $ as $ (\sd D) \times [0,1] $. Since $f$ is affine on every simplex of $ \cubulation $, then $\extf$ is a Bestvina-Brady map on the subdivision of $\extcubulation$.

\begin{remark}
	This extension produces the same result as the procedure in \cite{IMMHyperbolic5Manifold}. There the edges are always oriented towards the boundary, so the resulting function is as described above on a collar of each boundary component.

	Another possibility was to consider directly the extended cubulation in the previous section, orienting all its edges as done in \cite{IMMHyperbolic5Manifold}. However, by working on $ \cubulation $ first and extending $f$ to $ \extcubulation $ later, we avoid having to perform a full barycentric subdivision of the collars, which are now only subdivided into prisms over simplices of $ \sd C $.
\end{remark}

To prove that the $ \extf $ has ascending and descending links that are either collapsible or collapse to a $2$-sphere, we first need to check the following combinatorial condition.

\begin{lemma}\label{res:fibering-cusp-condition}
	Let $\pol[v]$ be a copy of $ \pol $, and let $c$ be an ideal vertex. There exists a move $m$ such that there are exactly two facets belonging to $m$ and incident to $c$; moreover, these two facets are not adjacent and they have opposite status.
\end{lemma}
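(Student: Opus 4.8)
The statement is about the combinatorial structure of the ideal vertices of $P^6$, so the first step is to understand that structure explicitly. An ideal vertex $c$ of a hyperbolic right-angled polytope is a point of $\partial\Hyp 6$ where several facets meet, and the link of $c$ is a Euclidean right-angled polytope of dimension $5$; the facets incident to $c$ correspond to the facets of this Euclidean link. For the Potyagailo--Vinberg polytope $P^6$ this link is a Euclidean cube, so $12$ facets are incident to each ideal vertex, coming in $6$ opposite pairs. (This is exactly the structure used in \cite{IMMHyperbolic5Manifold} for $P^5$, where the ideal vertices have cubical link of dimension $4$.) The plan is to first pin down, using the description of the facets in \cref{table:G6} and the Lorentzian scalar product criterion for adjacency, precisely which $12$ facets are incident to a given ideal vertex, and to identify the $6$ opposite pairs --- two facets of a cubical link are opposite (i.e. non-adjacent in the link) exactly when the corresponding facets of $P^6$ are non-adjacent.

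Next, using the action of $\Rsixteen$ (which by \cref{res:r16-preserves-moves} permutes the moves and, being a group of isometries, permutes the ideal vertices), I would reduce to checking a single representative ideal vertex $c$ --- or at most one representative per $\Rsixteen$-orbit of ideal vertices. For that representative I would list the $12$ incident facets by their $T_{24}\cup\{A,B,C\}$-labels, then look at how the five moves intersect this set of $12$ facets. Since a balanced state assigns opposite status to the two disjoint triples inside each of the four ``quaternionic'' moves, and equal status to $A,B,C$, the key point becomes purely combinatorial: I must exhibit one move $m$ that meets the $12$ incident facets in exactly two facets, and check that those two facets lie in the same disjoint triple's \emph{complement} of each other --- that is, that $\lab$ takes different values on them within the move, which by the remark before the definition of \emph{balanced} is equivalent to their being non-adjacent, hence (in a balanced state) having opposite status. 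The non-adjacency of the two facets in $P^6$ is also exactly the condition that they are opposite facets of the cubical link, which fits the cube structure.

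The main obstacle I expect is the bookkeeping in the first step: correctly reading off from \cref{table:G6} which $12$ of the $27$ facets are incident to a chosen ideal vertex, and verifying that the incidence structure really is that of a $5$-cube with the claimed opposite pairs. Once the $12$ labels and the $6$ opposite pairs are in hand, the rest is a short finite check --- one verifies that among the five moves exactly one (or at least one, which is all that is claimed) restricts to a $2$-element subset of the incident facets, that this $2$-element subset is one of the opposite pairs, and hence that in any balanced state these two facets are non-adjacent with opposite status. I would organise the write-up so that the cubical-link description of ideal vertices is stated first (citing \cite{PotyagailoVinberg} or \cite{EverittRatcliffeTschantz} for the fact that $P^6$ is dual to the Gosset polytope $\gosset$ and that its ideal vertices have cubical links), then carry out the single representative computation, and finally invoke $\Rsixteen$-equivariance of the set of moves to conclude for all ideal vertices.
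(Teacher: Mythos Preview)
Your overall strategy matches the paper's: describe the cubical link of an ideal vertex, reduce by the $\Rsixteen$-action to a small number of representatives, and then check by hand that some move meets the incident facets in exactly two non-adjacent facets (so that balancedness forces opposite status). Two points need correction.

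First, a numerical slip: the horospherical link of an ideal vertex of $P^6$ is a Euclidean \emph{$5$-cube}, which has $2\cdot 5 = 10$ facets in $5$ opposite pairs, not $12$ facets in $6$ pairs. Indeed, the paper records (and you can read this off from the adjacency data) that each of the $27$ ideal vertices is ``opposed'' to one of the $27$ facets: the cusp opposed to $F$ is incident to precisely the $10$ facets that are neither equal nor adjacent to $F$. Using this duality, rather than extracting incidence directly from \cref{table:G6}, makes the bookkeeping much lighter.

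Second, $\Rsixteen$ does not act transitively on the $27$ ideal vertices (it has order $16$ and fixes $A$), so you genuinely need several representatives. The paper handles three cases: the cusps opposed to $A$, $B$, $C$; a cusp opposed to a label in $T_{24}\setminus Q_8$ (then use $\Rsixteen$-transitivity on those $16$); and a cusp opposed to a label in $Q_8$ (then use $Q_8$). One further caveat you should make explicit: the pair you exhibit must lie in one of the four quaternionic moves, not in $\{A,B,C\}$, since balancedness gives $A,B,C$ \emph{equal} status; your criterion ``$\lab$ takes different values'' already forces this, but it is worth saying.
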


\begin{proof}
	Each of the $27$ ideal vertices of $P^6$ is opposed to one of the $27$ facets, in the sense that the ideal vertex opposed to a facet $F$ is incident to the $10$ facets that are not equal nor adjacent to $F$. If we intersect $P^6$ with a small horosphere around an ideal vertex $c$ we obtain a $5$-cube, and in particular the $10$ facets incident to $c$ are partitioned in $5$ pairs of opposite facets, and two facets are adjacent if and only if they are in a different pair.

	By looking at \cref{table:r-of-t}, we now check each cusp and show that for each there is a move that contains exactly two opposite facets incident to the cusp. If those two facets are different from $A$, $B$, and $C$, by the balancedness property of the state on $ \pol[v] $ it follows that they have opposite status, since they are not adjacent.

	\begin{itemize}
		\item The cusp opposed to $A$ is incident to all the facets with label in $Q_8$, and to $B$ and $C$. Every pair of opposite facets, save for $B$ and $C$, satisfy the condition. The same happens for the cusps opposed to $B$ and $C$, since they are incident to $A$ and $C$ (resp.~$A$ and $B$) and to all the facets in the third (resp.~second) column of \cref{table:r-of-t}.
		\item Consider the cusp opposed to $ 1+i+j+k $. The two facets $ -1-i+j-k $ and $ -j $ are incident to it, and all the other facets in the same move are not. Since they are not adjacent to each other, they satisfy the condition. By acting with $ \Rsixteen $, one proves that the condition also holds for all cusps opposed to a facet with label in $ \bintetra \setminus Q_8 $.
		\item The facets $ -1+i+j+k $ and $ -1-i-j-k $ are in the same move, they are not adjacent, and they are incident to the cusp opposed to the facet with label $1$. All other facets in the same move are adjacent to the facet $1$, and therefore not incident to the cusp. Using the action of $Q_8$ we get the condition on all facets with label in $Q_8$. \qedhere
	\end{itemize}
\end{proof}

\begin{proposition}\label{res:fibering-boundary}
	Let $D$ be a boundary component of $\extcubulation$. Then the restriction $ \extf \colon \sd D \to S^1 $ has collapsible ascending and descending links.
\end{proposition}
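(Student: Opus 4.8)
The boundary component $D$ is a closed flat $5$-manifold, tessellated by the $5$-cubes that arise as horospherical cross-sections of ideal vertices of the copies $\pol[v]$. The function $\extf$ restricted to the collar $D\times[0,1]$ is $\extf(x,h)=f(x,0)+h$, so on $\sd D = \sd D\times\{0\}$ the orientation of every edge of $D$ agrees with the orientation it inherits as an edge of $\cubulation$; in particular, after crossing into the collar, every edge of $\cubulation$ lying in $D\times\{0\}$ has an extra edge (the collar direction) attached at each of its vertices, oriented away from $D$. The plan is to compute the ascending and descending link at the barycentre of each cube $\cube$ of $\sd D$ and show it is collapsible. The key input is \cref{res:fibering-cusp-condition}: for each ideal vertex $c$ of each $\pol[v]$, there is a move $m$ containing exactly two facets incident to $c$, and these two facets are non-adjacent with opposite status.

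First I would set up the combinatorics. A cube $\cube$ of $D$ coming from the cross-section at $c$ corresponds to a subset of the $5$ pairs of opposite facets incident to $c$; equivalently, $\cube$ is dual (within the horospherical $5$-cube) to a face of a $5$-dimensional cross-polytope, and the collar adds one more square factor in the direction transverse to $D$. The decisive observation is that, by \cref{res:fibering-cusp-condition}, one of the five pairs of facets incident to $c$ lies in a single move $m$; call that a \emph{distinguished} pair. I would split into two cases according to whether the pair of facets defining one of the monochromatic factors of $\cube$ is this distinguished pair or not. If $\cube$ does \emph{not} involve the distinguished pair among its defining facets, then I claim the cube $\cube$ (as a cube of $\extcubulation$) is good: each of its defining facets is incident to $c$, lies in one of the four non-distinguished pairs, and one checks — using the balancedness of the state and the structure of the moves from \cref{table:r-of-t} — that some move contains exactly one of them; then \cref{res:good-cube-collapsible-face-link} gives collapsible ascending and descending links, and attaching the collar direction only cones off the link, preserving collapsibility. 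If $\cube$ \emph{does} involve the distinguished pair, I would use the inherited-state machinery: the two distinguished facets are non-adjacent and have opposite status, so in the dual picture $\dualface$ one of them is labelled $\In$ and the other $\Out$; by \cref{res:legal-inherited-collapsible-coface-link} the descending coface link is a regular neighbourhood of $\dualface_\In$ in $\partial\dualface$ and the ascending one is $\sd\dualface_\Out$, and the presence of an $\Out$-vertex non-adjacent to an $\In$-vertex lets one collapse $\dualface_\Out$ and $\dualface_\In$ onto cones. Combined with the collar's extra square factor (which under \cref{res:face-link-product} only introduces an $S^0$ join-factor on the \emph{face} side but a cone on the coface side, or vice versa), the join in \cref{lemma:linkup-join-flinkup-coflinkup} becomes a cone, hence collapsible.

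Concretely, I would proceed as follows. (i) Describe $\sd D$ and the cubes of $\extcubulation$ meeting $D$: each such cube is $\cube\times[0,1]$ for $\cube$ a cube of $D$, except that $D\times\{1\}\subseteq\partial\extcubulation$ contributes no ascending direction. (ii) For a cube $\cube$ of $D$ dual to $\face$ inside some $\pol[v]$, observe $\face$ is itself a face of $\pol[v]$ all of whose defining facets are incident to the ideal vertex $c$; invoke \cref{res:fibering-cusp-condition} to produce the distinguished move $m$ and pair $\{F,F'\}$. (iii) If $\{F,F'\}\not\subseteq\deffacets_{\cube}$: show $\cube$ is good as a cube of $\extcubulation$ — here one just needs \emph{one} defining facet alone in its move, which holds because the non-distinguished pairs, by balancedness, never form a full monochromatic clique coinciding with the face's defining set; then apply \cref{res:good-cube-collapsible-face-link} and cone off with the collar. (iv) If $\{F,F'\}\subseteq\deffacets_{\cube}$: pass to the inherited state on $\face$, note the facet $F\cap\face$ (resp.\ $F'\cap\face$) gets status $\Out$ automatically (same move as a defining facet) — wait, more carefully, $F$ and $F'$ \emph{are} defining facets here, so I instead look at the remaining defining facets and the other facets of $\face$; the key point is that $\face$ is now a lower-dimensional Potyagailo–Vinberg-type face, to which \cref{res:states-of-p6} (or its proof) applies to give a totally legal inherited state, hence collapsible coface link by \cref{res:collapsible-inherited-state}; then \cref{lemma:linkup-join-flinkup-coflinkup} gives collapsibility. (v) Also treat the cubes of $D\times\{1\}$ (no ascending collar edge) and any top-dimensional cubes of the collar separately — these are handled directly since $\extf$ is affine and non-constant in the collar direction.

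\textbf{Main obstacle.} The delicate point is step (iii)/(iv): determining, for every cube $\cube$ of $D$, whether its defining facets (all incident to a fixed ideal vertex $c$) contain the distinguished monochromatic pair, and in the cases where they do, identifying the resulting face $\face$ of $\pol[v]$ and checking its inherited state is totally legal — essentially re-running a cusp-flavoured version of the case analysis of \cref{res:states-of-p6}, now with the extra twist that the collar adds a transverse square factor and that one facet of each distinguished pair is forced to status $\Out$ but the other need not be. I expect this to require a short but careful tabulation, using \cref{table:r-of-t} and \cref{table:G6} together with the explicit lists of facets incident to each of the $27$ cusps, exactly as in the proof of \cref{res:fibering-cusp-condition}; the $\Rsixteen$-symmetry reduces this to a handful of representative cusps (one opposed to $A$, one opposed to a facet labelled in $\bintetra\setminus Q_8$, one opposed to a facet labelled in $Q_8$), after which collapsibility follows in each case from the balancedness of the state by the same cone-collapse arguments used throughout \cref{sec:P6}.
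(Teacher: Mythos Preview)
Your case split is vacuous. The distinguished pair $F_1, F_2$ from \cref{res:fibering-cusp-condition} consists of \emph{non-adjacent} facets of $\pol[v]$ --- they are opposite facets of the horospherical $5$-cube $H$ --- so they can never both be defining facets of a single face. Hence case (iv) is empty and case (iii) covers everything; but your claim in (iii) that every such cube is good is false: the top face $H$ itself is bad, and faces whose defining facets avoid both $F_1$ and $F_2$ may well be bad too, since nothing in \cref{res:fibering-cusp-condition} prevents \emph{other} moves from contributing multiple defining facets.

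The correct dichotomy, and the key idea you are missing, is the following. View $H$ as a right-angled polytope in its own right, with moves and state inherited from $\pol[v]$, and apply \cref{thm:inherited-state-collapsibility} to $H$; the resulting function on $\sd D$ agrees with $\restrict{\extf}{\sd D}$. For a face $F$ of $H$: either (a) one of $F_1, F_2$ is among the defining facets of $F$, in which case the distinguished move $m$ contains exactly that one defining facet (the opposite one cannot also be a defining facet), so $F$ is good; or (b) neither is, in which case $F\cap F_1$ and $F\cap F_2$ are facets of $F$ whose inherited status equals the original status of $F_1, F_2$ respectively (no defining facet of $F$ lies in $m$), hence they have \emph{opposite} status. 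Now use that $F$ is a subcube of a cube: its dual is a cross-polytope, in which any two non-opposite vertices are adjacent. Thus both the $\Out$- and the $\In$-subcomplex are cones (each on whichever of $F\cap F_1$, $F\cap F_2$ carries that status), so the inherited state is totally legal.

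This argument uses only the single pair guaranteed by \cref{res:fibering-cusp-condition} together with the cone structure of cross-polytopes; there is no need for the cusp-by-cusp tabulation you outline in your main-obstacle paragraph, nor any rerun of \cref{res:states-of-p6}. Note also that the collar direction is irrelevant here: the proposition concerns links \emph{within} $\sd D$, not within $\extcubulation$; the interaction with the collar is handled separately in \cref{res:perfect-morse-on-extended-cubulation}.
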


\begin{proof}

	Let $ \truncpol[v] $ be a copy of $ \truncpol $ that intersects $D$, and let $H$ be the facet contained in $D$. This facet is a Euclidean cube, and it inherits a state and a set of moves from $\pol[v]$. By performing the same procedure described in \cref{sec:bary-sub} on the polytope $H$, we obtain a Bestvina-Brady Morse function on $ \sd D $, which by construction coincides with $f$.

	To prove that all ascending and descending links are collapsible, by \cref{thm:inherited-state-collapsibility} we need to show that the states of all the copies of $H$ are totally legal, and that the same holds for all inherited states of bad faces.

	Note that bad faces of $H$ are precisely the intersection of bad faces of $ \truncpol $ with $H$. The state of $H$ can be obtained by restricting the state of $\truncpol$ to the facets adjacent to $H$, and similarly the inherited state of a face $F$ of $H$ is obtained by restricting the inherited state of the face of $\truncpol$ that intersects $H$ in $F$.

	By \cref{res:fibering-cusp-condition}, the cube $H$ has a pair of opposite facets  that are in the same move, have opposite status, and all other facets are in a different move. Let $F_1$ and $F_2$ be these two facets.

	Let $F$ by any face of $H$. If $F$ is contained in $F_1$, then it is the intersection of $F_1$ and other facets in a different move from $F_1$, so $F$ is good. The same holds if $F$ is contained in $F_2$.

	Otherwise, $F$ intersects both $F_1$ and $F_2$, and $ F \cap F_1 $, $ F \cap F_2 $ have opposite inherited status. The dual of $F$ is an octahedron, and has two opposite vertices with opposite status, so the subcomplexes generated by vertices with status $\Out$ and by vertices with status $\In$ are both cones. So the inherited status is totally legal.
\end{proof}

\begin{proposition}\label{res:perfect-morse-on-extended-cubulation}
	The map $ \extf \colon \extcubulation \to S^1 $ has all ascending and descending links that are either collapsible or they collapse on a $2$-sphere.
\end{proposition}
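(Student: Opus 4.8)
The plan is to identify, vertex by vertex, the ascending and descending links of $\extf$ on the subdivided $\extcubulation$, reducing each to \cref{res:f-regular-or-3-critical} (which controls the contribution of $\cubulation$) or to \cref{res:fibering-boundary} (which controls the collars). The vertices of $\sd\extcubulation$ are of three kinds: vertices $v$ of $\sd\cubulation$ contained in no collar $D\times\set{0}$; vertices $(w,0)$, where $w$ is a vertex of $\sd D$ and $D$ is a boundary component; and vertices $(w,1)$ on $\boundary\extcubulation$. For $v$ of the first kind no collar cell contains $v$, so $\link(v,\sd\extcubulation)=\link(v,\sd\cubulation)$, the links of $\extf$ and of $f$ at $v$ coincide, and they are collapsible or collapse to a $2$-sphere by \cref{res:f-regular-or-3-critical}.

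For the remaining vertices everything is governed by the product formula $\extf(x,h)=f(x,0)+h$ on a collar $(\sd D)\times[0,1]$, which is strictly increasing in $h$; along the edge $\set{w}\times[0,1]$ it winds once around $S^1$, so $(w,0)$ is its strict minimum and $(w,1)$ its strict maximum. Write $L=\link(w,\sd D)$, viewed inside $\link((w,0),\sd\cubulation)$ through $(\sd D)\times\set{0}\subseteq\sd\cubulation$, and let $q$ be the vertex of the link corresponding to $\set{w}\times[0,1]$; inspecting the prism cells $\tau\times[0,1]$ one gets $\link((w,0),\sd\extcubulation)=\link((w,0),\sd\cubulation)\cup_L(q*L)$, a simplicial complex since the prism over a simplex contributes a simplex to the link. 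Sorting cells by whether $(w,0)$ is their minimum or their maximum, one finds that no prism cell lies in the descending link, so $\linkdown[\extf]{(w,0)}=\linkdown[f]{(w,0)}$, handled by \cref{res:f-regular-or-3-critical}; and that $\linkup[\extf]{(w,0)}=\linkup[f]{(w,0)}\cup_A(q*A)$, where $A=L\cap\linkup[f]{(w,0)}$ is the ascending link of the restriction $\extf|_{\sd D}$ at $w$, collapsible by \cref{res:fibering-boundary}. Symmetrically, at a boundary vertex $(w,1)$ the whole link is the cone $q'*L$ over $L$ (with $q'$ the direction of $\set{w}\times[0,1]$), no prism cell lies in the ascending link, so $\linkup[\extf]{(w,1)}$ equals the ascending link of $\extf|_{\sd D}$ at $w$, collapsible by \cref{res:fibering-boundary}, while $\linkdown[\extf]{(w,1)}$ is a cone over the descending link of $\extf|_{\sd D}$ at $w$, hence collapsible.

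Only $\linkup[\extf]{(w,0)}=\linkup[f]{(w,0)}\cup_A(q*A)$ still needs an argument, and for it I would use the elementary fact that if $A$ is a \emph{collapsible} subcomplex of a simplicial complex $X$, then $X\cup_A(q*A)$ collapses onto $X$: given a collapse of $A$ onto a point $a_0$ through elementary collapses $(\sigma_i,\tau_i)$, the pairs $(q*\sigma_i,q*\tau_i)$ may be removed in the same order — at each step $q*\sigma_i$ is free, its only not-yet-removed coface in the cone being $q*\tau_i$ — without ever touching $X$ or $A$; afterwards one is left with $X$ together with the single edge $q*a_0$, which collapses onto $X$. Taking $X=\linkup[f]{(w,0)}$ shows that $\linkup[\extf]{(w,0)}$ collapses onto $\linkup[f]{(w,0)}$, which is collapsible or collapses to a $2$-sphere by \cref{res:f-regular-or-3-critical}; since a composition of collapses is a collapse, so is $\linkup[\extf]{(w,0)}$. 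This settles every vertex.

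I expect the genuine content to be the second paragraph — correctly describing the link at $(w,0)$ in the prismatically (not simplicially) subdivided collar, and deciding exactly which prism cells $\tau\times[0,1]$ belong to the ascending and which to the descending link — together with the cone-attachment lemma; the reductions to \cref{res:f-regular-or-3-critical,res:fibering-boundary} are then bookkeeping.
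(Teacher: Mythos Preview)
Your proof takes essentially the same route as the paper's: split vertices into those on $\boundary\extcubulation$, those in $\sd\cubulation$ away from every collar, and those of the form $(w,0)$; handle the first two directly via \cref{res:fibering-boundary} and \cref{res:f-regular-or-3-critical}; and for the third observe that the descending link is unchanged while the ascending link is the old one with a cone over a collapsible subcomplex attached. Your explicit cone--attachment lemma is exactly what the paper invokes in its last sentence (``obtained from a collapsible complex by coning some collapsible subcomplexes, and is therefore itself collapsible'').

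There is one small gap. A vertex of $\sd\cubulation$ can lie in \emph{several} collars $D\times\{0\}$ at once, since a face of $\pol$ may be incident to more than one ideal vertex; the paper accounts for this by coning ``for every boundary component $D$ such that $D\times[0,1]$ is a collar that contains $v$''. Your formula $\link((w,0),\sd\extcubulation)=\link((w,0),\sd\cubulation)\cup_L(q*L)$ should therefore carry one cone $q_D * L_D$ for each such $D$, and the prism cells from distinct collars meet only inside $\sd\cubulation$. The fix is immediate: apply your cone--attachment lemma once per collar. Also note that the vertices with $S^2$ links are barycentres of top-dimensional cubes, which cannot lie in the $5$-dimensional subcomplex $D\times\{0\}$; so in case $(w,0)$ the link $\linkup[f]{(w,0)}$ is in fact always collapsible, and the ``or collapses to a $2$-sphere'' alternative never occurs there.
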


\begin{proof}
	Let $v$ be a vertex of the subdivision of $ \extcubulation $. If $v \in \boundary\extcubulation$, since $f$ decreases while moving away from the boundary, then $ \linkup[\extf] v $ coincides with $\linkup[\restrict \extf {\boundary \extcubulation}]v$, and we know from \cref{res:fibering-boundary} that it is collapsible. On the other hand, $ \linkdown[\extf]v $ is the cone over $ \linkdown[\restrict \extf {\boundary \extcubulation}]v $, so it is also collapsible.

	Otherwise, let $ v \in \sd \cubulation $. The only vertices whose ascending and descending links in $ \cubulation $ collapse to a $2$-sphere are the barycentres of some $6$-cubes, and they are not contained in any collar, so their ascending and descending link remains the same.

	Otherwise, we may assume that $v$ has collapsible ascending and descending links in $\cubulation$. When adding the collars, we get that $ \linkdown[\extf]v = \linkdown[f]v $, while  $ \linkup[\extf]v$ is obtained from $ \linkup[f]v $ by coning off the intersection of $ \linkup[f]v $ with $ D \times \set0 $, for every boundary component $D$ such that $ D \times [0,1] $ is a collar that contains $v$.

	We claim that each of these subcomplex we are coning off is collapsible. To see this, note that $ D \times \set 0 $ and $ D \times \set1 $ are canonically isomorphic, and $ \restrict f {D \times \set 0} $ coincides with $ \restrict \extf {D \times\set1} $. Combining this by \cref{res:fibering-boundary} we get that $ \restrict f {D \times 0} $ has collapsible ascending and descending link, thus proving the claim.

	So $ \linkup[\extf]v $ is obtained from a collapsible complex by coning some collapsible subcomplexes, and is therefore itself collapsible.
\end{proof}

\section{Smoothing}\label{sec:smoothing}

The aim of this section is to promote the Bestvina-Brady Morse function on $ \extcubulation $ to a smooth one.
In the following, we employ the same terminology and definitions given in \cite{RourkeSanderson}.

Let $M$ a PL $n$-manifold with a compatible structure of an affine cell complex. Given a Bestvina-Brady Morse function $ f \colon M \to \RR $, we can define an analogue of the concepts of regular and critical point that are defined for smooth Morse functions.

\begin{definition}
	Let $M$ be an affine cell complex that is a compact PL manifold, and let $ f \colon M \to S^1 $ be a Bestvina-Brady Morse function. We call a vertex $v$ \newterm{regular} if the descending link is collapsible. We call it \newterm{critical of index $i$} if its descending link collapses to a PL $(i-1)$-sphere.
\end{definition}

This analogy can be made precise by the following statement.

\begin{proposition}\label{prop:smoothing}
	Let $ M $ be an affine cell complex that is a compact PL manifold of dimension $ n \leq 6 $, with possibly non-empty boundary, and $ f\colon M \to S^1 $ be a Bestvina-Brady Morse function.

	Suppose that every vertex $v$ is either regular, or is critical of index $i_v$ and is contained in the interior of $M$. Suppose moreover that the restriction $ \restrict f {\boundary M} $ has only regular vertices.

	Then there exists a unique smooth structure on $M$ compatible with the piecewise-linear one, and there exists a smooth Morse function $ g\colon M \to S^1 $ with the same amount of critical points for every index as $f$.
\end{proposition}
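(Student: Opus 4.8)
The plan is to convert the combinatorial Morse data of $f$ into an honest handle decomposition of $M$ and then invoke classical low-dimensional smoothing theory together with the standard dictionary between handle decompositions and Morse functions. First I would dispose of the smooth structure: since $M$ is a compact PL manifold with $\dim M = n \le 6$, the existence and uniqueness up to isotopy of a compatible smooth structure follow from the smoothing theory of Munkres, Hirsch--Mazur and Cerf (in the version for manifolds with boundary), the relevant obstructions living in cohomology of $M$ with coefficients in the groups $\Theta_k$ of homotopy $k$-spheres for $k \le n \le 6$, which vanish in this range. This is precisely where the hypothesis $n \le 6$ is used; uniqueness already fails at $n = 7$. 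Fix such a smooth structure from now on.

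The heart of the argument is producing a PL handle decomposition. I would pick a point $p \in S^1$ not in the image of any vertex, so that $N \coloneq f^{-1}(p)$ is a PL submanifold transverse to the cells, cut $M$ along $N$ to obtain a compact PL cobordism $W$ from $N$ to $N$ on which $f$ lifts to a real-valued Bestvina--Brady Morse function $\tilde f \colon W \to [0,1]$, and then sweep out $W$ by sublevel sets of $\tilde f$, processing one vertex at a time in order of increasing value via \cref{prop:bb-deformation-lemma}. Crossing a \emph{regular} vertex changes the sublevel set by a collapse of one compact PL $n$-manifold-with-boundary onto a codimension-$0$ PL submanifold of the same dimension; such a collapse is realized by pushing in a collar and is therefore a PL homeomorphism. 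Crossing a \emph{critical} vertex of index $i$ changes the sublevel set, up to such a collapse, by coning off a PL $(i-1)$-sphere $S$ embedded in the current level set; passing to a regular neighbourhood of the cone $CS$ inside the ambient PL manifold turns this into a PL $i$-handle attachment with attaching sphere $S$. Because every critical vertex lies in the interior of $M$, all handles are attached away from $\partial M$, and because $\tilde f|_{\partial W}$ has only regular vertices the whole sweep can be performed relative to a product collar of $\partial M$. The outcome is that $W$ is obtained from $N \times [0,1]$ by attaching, in the interior and rel $\partial M$, exactly $c_i$ handles of index $i$, where $c_i$ is the number of index-$i$ critical vertices of $f$.

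Finally I would smooth and conclude. With respect to the smooth structure fixed above, in dimension $\le 6$ the attaching embeddings and the handle presentation of $W$ can be smoothed (Cairns--Hirsch), yielding a smooth handle decomposition of $W$ rel $\partial M$ with the same handles; the standard correspondence between handle decompositions and Morse functions (see Milnor, \emph{Lectures on the $h$-cobordism theorem}) then produces a smooth Morse function $g \colon W \to [0,1]$ that is constant near each end of the cobordism, has exactly $c_i$ critical points of index $i$, and none on $\partial M$. Arranging $g$ near the ends to agree with the coordinate used to cut $M$ along $N$, it descends to a smooth Morse function $g \colon M \to \RR/\ZZ = S^1$ with exactly $c_i$ critical points of index $i$ for every $i$, which is the assertion of the proposition.

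I expect the main obstacle to be the critical-vertex step: one must check that \cref{prop:bb-deformation-lemma} genuinely encodes a handle attachment rather than a more degenerate gluing, i.e.\ that the descending link $S$ --- a priori only a subcomplex of the PL sphere $\link(v)$ that collapses to $S^{i-1}$ --- is \emph{normally unknotted} in the level set, so that its cone has a PL $n$-ball as a regular neighbourhood; and one must organize these regular neighbourhoods coherently across all critical values while keeping them disjoint from $\partial M$. The remaining ingredients --- low-dimensional PL smoothing and the handles-to-Morse-function translation --- are standard.
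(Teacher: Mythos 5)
Your overall strategy is the same as the paper's: cut $M$ along a level set $f^{-1}(p)$ (the paper does this slightly differently, working in the infinite cyclic cover and taking $\lift f^{-1}([0,1])$, but these are equivalent), sweep through sublevel sets one vertex at a time via \cref{prop:bb-deformation-lemma}, turn each crossing into a PL collapse or a PL handle attachment, observe that all handles sit in the interior, and then smooth in dimension $\leq 6$ and translate the handle decomposition back into a smooth circle-valued Morse function. Your treatment of the smooth structure (uniqueness from vanishing of the relevant obstruction groups in dimension $\leq 6$) and of the regular-vertex and boundary cases is fine.

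The genuine gap is exactly where you suspect it to be, and you leave it open rather than resolving it. You write that crossing a critical vertex cones off ``a PL $(i-1)$-sphere $S$'' and then acknowledge that in fact the descending link is only a complex that \emph{collapses} to a PL $(i-1)$-sphere, which is weaker. This is the entire content of the paper's \cref{lemma:local-pl-handle}, and it is the hard part of the proof: one must show that attaching a PL $n$-ball $B$ along a subcomplex $C \subseteq \partial B$ that merely collapses to $S^{i-1}$ changes the manifold, up to shellings, by a genuine PL $i$-handle attachment. The paper's resolution is not a ``normal unknottedness'' argument of the kind you gesture at; rather, it adapts Benedetti's treatment of discrete Morse functions. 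Concretely, one cones a triangulation of $\partial B$ to a point $v$, observes that $B' := vS$ is a PL $i$-ball, chooses (after subdividing) an $i$-simplex $\sigma$ of $B'$ so that $B' \setminus \interior\sigma$ collapses onto $S$, and then plays collapses against regular neighbourhood theory (\cite[Theorem 3.26]{RourkeSanderson}) to sandwich a handle attachment between two shellings. Your proposal flags this step as ``the main obstacle'' and says the remaining ingredients are standard, but this step \emph{is} the proposition; without an argument for it, the proof is incomplete.
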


\begin{remark}
	The bound on the dimension is a technical condition under which the smooth and the PL categories coincide. In dimension $7$ there are PL manifolds admitting multiple compatible smooth structures, while in higher dimension there might be no compatible smooth structures. We refer the interested reader to \cite[Section 1.3]{BenedettiSmoothingMorseTheory}.
\end{remark}

The core of the proof of \cref{prop:smoothing} is a lemma of local nature. Before stating it we need to recall the definition of shelling.
\begin{definition}
	An \newterm{elementary shelling} of a PL $n$-manifold $M$ is the removal of $ \interior B \cup (B \cap \boundary M) $, where $B$ is a PL $n$-ball such that $ B \cap \boundary M $ is a $(n-1)$-ball; we say that $M$ shells to $N \subseteq M$ if $N$ can be obtained by a sequence of elementary shellings. Note that, in contrast to collapses, shelling does preserve PL homomorphism type (so if $M$ shells to $N$, then $M$ and $N$ are PL-homeomorphic). For more details, see \cite[Chapter 3]{RourkeSanderson}.
\end{definition}

\begin{lemma}\label{lemma:local-pl-handle}
	Let $M$ be a PL $n$-manifold with boundary, and let $B$ be a PL $n$-ball. Let $ C \subseteq \partial B,\, C' \subseteq \partial M$ be PL $(n-1)$-submanifolds with (possibly empty) boundary, with a PL homeomorphism $ C \isom C' $; let $M'$ be obtained by attaching $B$ on $M$ along $ C \isom C' $, as in \cref{fig:PL-handle}.

	The following hold.

	\begin{itemize}
		\item If $C$ is collapsible, then $M'$ shells to $M$.
		\item If $C$ collapses to a $PL$ $(i-1)$-sphere, then there exist PL manifolds $M \subseteq X \subset Y \subseteq M'$ such that $X$ shells to $M$, $M'$ shells to $Y$ and $Y$ is obtained by attaching an $i$-handle to $X$.
	\end{itemize}
\end{lemma}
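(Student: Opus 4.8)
The plan is to reduce both statements to standard facts about regular neighbourhoods and shellings, using the collapsibility hypothesis on $C$ to build an explicit shelling of the attached ball $B$. First I would observe that attaching $B$ along $C \cong C'$ can be analysed locally: since $C$ is a codimension-$0$ submanifold (with corners) of $\partial B$, a regular neighbourhood $R$ of $C$ in $B$ is PL-homeomorphic to $C \times [0,1]$, with $C \times \{0\}$ identified to $C$; the complement $B \setminus \interior R$ is again a PL $n$-ball (collar uniqueness), and it meets $\partial M'$ in a PL $(n-1)$-ball. So it suffices to understand the ``thickened'' piece $C \times [0,1]$ attached to $M$ along $C \times \{0\} \subset \partial M$.

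For the first bullet, suppose $C$ is collapsible. A collapse $C \searrow \ast$ expands, under the product with $[0,1]$, to a shelling: each elementary collapse (removing a free face pair $(\tau, \sigma)$ with $\sigma$ a free face of $\tau$) corresponds, after thickening, to the removal of a PL ball of the form (roughly) $\tau \times [0,1]$ which meets the current boundary in a $(n-1)$-ball supported on $\sigma \times [0,1]$ together with part of $\tau \times \{1\}$; this is an elementary shelling. Carrying this out in the order dual to the collapse, the piece $C \times [0,1]$ shells off entirely, leaving exactly $M$. (One should work with the induced affine/simplicial structure; after the reduction of the previous paragraph the combinatorics is precisely that of thickening a collapse, which is classical — see \cite[Chapter~3]{RourkeSanderson}.) Combined with the collar-complement reduction above, $M' = M \cup B$ shells to $M$.

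For the second bullet, suppose $C$ collapses to a PL $(i-1)$-sphere $S$. Then $C$ is a regular neighbourhood of $S$ in $C$; since $S$ is collapsible after removing one top-dimensional cell, write $C \searrow S$ and further $S \setminus (\text{open cell}) \searrow \ast$. The plan is: first shell off the part of $C \times [0,1]$ that thickens the collapse $C \searrow S$, exactly as in the first bullet; this produces an intermediate manifold $X$ with $M \subseteq X$, where $X$ shells to $M$ and $X = M \cup (N \times [0,1])$ with $N$ a regular neighbourhood of $S$ in $C$ — i.e.\ $X$ is $M$ with a ``thickened $(i-1)$-sphere'' attached along its boundary, which is precisely the attachment of an $i$-handle $Y = X \cup (D^i \times D^{n-i})$ when one absorbs the sphere's disc; concretely one takes $Y$ to be $X$ together with the ball thickening $S$ itself (a regular neighbourhood in $B$ of the cone on $S$), so $Y$ is obtained from $X$ by attaching an $i$-handle. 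Finally the remaining cells of $B$ — those not in the regular neighbourhood of the cone on $S$ — thicken the collapse of the rest of $C$ and of the complementary ball from the first paragraph, so $M'$ shells down to $Y$.

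The main obstacle I anticipate is making the handle-attachment in the second bullet genuinely rigorous: one must check that the PL ball thickening the $(i-1)$-sphere really attaches to $X$ along $(\partial D^i) \times D^{n-i} = S^{i-1} \times D^{n-i}$ embedded in $\partial X$ in the standard (unknotted, trivially framed up to the data) way, so that $Y$ is a bona fide $i$-handle attachment and not something more exotic. This is where one needs the regular neighbourhood theory carefully — uniqueness of regular neighbourhoods of the sphere $S$ inside $C$ and inside $B$, and the identification of the normal structure — together with the hypothesis (used implicitly) that $S$ is a \emph{PL} sphere, unknotted in the relevant sense because it bounds the explicit cone inside the ball $B$. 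The collar/shelling bookkeeping of which cells go into $X$, into $Y \setminus X$, and into $M' \setminus Y$ is routine once this local model is pinned down.
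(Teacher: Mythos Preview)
Your preliminary reduction has a genuine gap. You claim that the complement $\overline{B \setminus R}$ of the collar meets $\partial M'$ in a PL $(n-1)$-ball, and hence can be shelled off first, reducing to the analysis of $M \cup (C \times [0,1])$. But $\overline{B \setminus R} \cap \partial M' = \overline{\partial B \setminus C}$, and this is an $(n-1)$-ball in the sphere $\partial B$ \emph{only} when $C$ itself is a ball. In the second bullet $C$ merely collapses to an $(i-1)$-sphere, so $C$ can be, say, $S^{i-1}\times D^{n-i}$; then $\overline{\partial B \setminus C}$ is a complementary $D^i\times S^{n-i-1}$ and your first shelling step is not an elementary shelling at all. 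So the collar reduction does not go through for the case you most need it.

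For the first bullet your route works but is much longer than necessary. A collapsible compact PL $(n-1)$-manifold is a PL $(n-1)$-ball (its own regular neighbourhood shells to a regular neighbourhood of a point), so the pair $(B,C)$ is PL homeomorphic to $(\Delta^n,\Delta^{n-1})$ and $M'$ shells to $M$ in a \emph{single} elementary shelling; no product-thickening of the collapse is needed. This is exactly what the paper does.

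The paper avoids the collar decomposition entirely for the second bullet. Instead it triangulates $\partial B$ with $C$ and $S$ as subcomplexes and cones to an interior point $v$, so $B' \coloneqq vS$ is a genuine PL $i$-ball sitting inside $B$. Removing a top $i$-simplex $\sigma$ from $B'$ gives $Z$ with $Z\searrow S \subset M$, so $M\cup Z \searrow M$; one then takes $X$ and $Y$ to be regular neighbourhoods in $M'$ of $M\cup Z$ and $M\cup B'$ and invokes the standard ``collapse $\Rightarrow$ shelling of regular neighbourhoods'' theorem \cite[Theorem~3.26]{RourkeSanderson}, together with \cite[Lemma~2.10]{BenedettiSmoothingMorseTheory} for the handle identification. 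The final shelling $M'\to Y$ comes from $M' \searrow M\cup vC \searrow M\cup vS$. The cone structure is what makes the sphere $S$ automatically bound an $i$-disc in $B$ and sidesteps the framing issue you flag at the end; your product model does not supply this disc.
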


\newboolean{collapsible}
\begin{figure}
	\centering
	\setboolean{collapsible}{true}
	\begin{subfigure}{0.4\textwidth}
		\includestandalone[height=3cm]{pictures/PL-handle}
	\end{subfigure}
	\setboolean{collapsible}{false}
	\begin{subfigure}{0.4\textwidth}
		\includestandalone[height=3cm]{pictures/PL-handle}
	\end{subfigure}
	\caption{Attaching a ball $B$ to a manifold. If the attaching locus $C$ is collapsible, this is a shelling (left); if $C$ instead collapses to an $(i-1)$-sphere, we are attaching an $i$-handle (right, with $i=1$). }
	\label{fig:PL-handle}
\end{figure}

\begin{proof}
	We adapt the proof of \cite[Theorem 2.2]{BenedettiSmoothingMorseTheory}, which is a similar statement for discrete Morse functions, to the PL setting.

	If $C$ is collapsible, then the pair $(B, C)$ is PL homeomorphic to the pair of standard simplices $(\Delta^n,\Delta^{n-1})$, so $M'$ shells to $M$ via an elementary shelling.

	Suppose now that $C$ collapses to a PL sphere $S$. Pick a PL triangulation of $\partial B$, such that $C$ and $S$ are subcomplexes, and cone it to a point $v$ to obtain a triangulation of $B$.
	The cone $B'\coloneq vS$ is a $PL$ $i$-ball, so up to performing some barycentric subdivisions on $B'$ we can choose an $i$-simplex $\sigma$ of $B'$ such that $Z \coloneq B' \setminus \interior \sigma$ collapses on $S$ \cite[Lemma 2.22 and Definition 2.17]{BenedettiSmoothingMorseTheory}. Since $M \cup Z$ collapses on $M$ then we can find regular neighbourhoods $X := \regneigh{M \cup Z}$ and $\regneigh{M}$ such that the first shells to the second \cite[Theorem 3.26]{RourkeSanderson}. Moreover, if $Y$ denotes an appropriate neighbourhood of $M \cup B'$, by \cite[Lemma 2.10]{BenedettiSmoothingMorseTheory}, $Y$ is obtained from $X$ by attaching an $i$-handle.

	It remains to show that $M'$ shells to $Y$, but this is true because $M'$ collapses on $M \cup vC$ (we can collapse every simplex not in $vC$ from its free face on $\partial M'$) which in turns collapses to $M \cup vS$, so again by taking regular neighbourhoods we obtain the shelling.
\end{proof}

\begin{proof}[Proof of \cref{prop:smoothing}]
	Pass to the infinite cyclic cover $ \univcov M $, with the lift $ \lift f \colon \univcov M \to \RR $ of $f$, and consider $ N=\lift f^{-1} ([0,1]) $. We assume that $0$ is not the image of any vertex, and that vertices have distinct images under $\lift f$ (both can be achieved with a small perturbation). We want to study $N$ by using the sublevels of $ \lift f $.

	Let $ 0 < a_1 < \dots < a_k < 1 $ be real numbers such that $ \widetilde f^{-1}((a_i, a_{i+1}]) $ contains exactly a vertex.

	By \cref{prop:bb-deformation-lemma}, the sublevel $ M_{\leq a_{i+1}} $ collapses to $ M_{\leq a_i} $ with the descending link of the vertex coned off; by passing to a regular neighbourhood, we obtain that $ M_{\leq a_{i+1}} $ is PL homeomorphic to $ M_{\leq a_i} $ with a PL ball $B$ attached along some submanifold $C$ of $ f^{-1}(a_i) $ which is a neighbourhood of the descending link. So we obtain that:
	\begin{itemize}
		\item either $v$ is regular, and so $C$ is collapsible;
		\item or $v$ is non-degenerate critical of index $i$, so $C$ collapses to an $(i-1)$-sphere.
	\end{itemize}

	By \cref{lemma:local-pl-handle}, the map $ \widetilde f $ produces a relative PL handle decomposition, that is, we are describing $N$ as $ \widetilde f^{-1}([0, \epsilon]) $ with some handles attached. Indeed, when we cross the sublevel of a critical vertex, we are attaching a PL handle.

	Note that these handles are always contained in the interior of $N$, as there are no critical points on the boundary. Since we are in low dimension $ n \leq 6 $, this handle decomposition can be made smooth, and then it can be turned into a smooth Morse function $ \widetilde{g} \colon  N \to [0,1] $. By gluing back the boundaries $ \widetilde f^{-1}(0)$ and $ \widetilde f^{-1}(1)$ we obtain our desired smooth circle-valued map.
\end{proof}

\def\facets{\mathcal F}
\def\vectorspace{(\ZZ[2])^\facets}
\def\smallvectorspace{(\ZZ[2])^7}

\section{The infinite cover and finiteness properties}
\label{sec:finiteness}

Given the manifold $ \mfld $ equipped with the perfect Morse function $ f \colon \mfld \to S^1 $, we consider the infinite cyclic cover $ \abcover $ associated with $ \ker f_* $. Analogously to \cite[Corollary 5]{BattistaHyperbolic4manifolds} we obtain the following.

\begin{corollary}\label{abcover-betti}
	The infinite cyclic cover $ \abcover $ is a geometrically infinite hyperbolic manifold that is diffeomorphic to $ F \times [0,1] $ with infinitely many $3$-handles attached on both sides, where $F$ is the interior of a $5$-manifold with toric boundary, and has nontrivial $ \pi_2(F) $.

	Moreover, $\abcover$ has fundamental group that is finitely presented but not type $ \finitetype[3] $, and has $ b_3(\abcover) = \infty $.
\end{corollary}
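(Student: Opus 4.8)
The plan is to lift $f$ to the infinite cyclic cover and run Morse theory there, reusing \cref{sec:cusps} and \cref{sec:smoothing}. To obtain a proper function I would work with the compactification $\truncmfld$ and the (smoothed) extended perfect Morse function $\extf\colon\truncmfld\to S^1$ produced by \cref{prop:smoothing} applied to the map of \cref{sec:cusps}: its critical points are the finitely many index-$3$ ones, all interior, and $\extf$ restricted to $\partial\truncmfld$ has no critical points by \cref{res:fibering-boundary}. The cover $\overline{\abcover}\to\truncmfld$ associated with $\ker f_*$ (the natural compactification of $\abcover$, using $\pi_1\truncmfld=\pi_1\mfld$) then carries a proper lift $\widetilde{\extf}\colon\overline{\abcover}\to\RR$ with countably many interior critical points, all of index $3$, and collapsible/regular boundary behaviour. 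Choosing regular values $a<b$ with no critical value in between and applying \cref{lemma:local-pl-handle} on the compact slabs $\widetilde{\extf}^{-1}([a-n,b+n])$ as $n\to\infty$ (as in the proof of \cref{prop:smoothing}), I obtain that $\overline{\abcover}$ is built from $\bar F\times[0,1]$, where $\bar F\coloneqq\widetilde{\extf}^{-1}(t_0)$ is a compact $5$-manifold, by attaching countably many $3$-handles above and countably many below. Since $\extf$ restricted to each boundary component $D$ of $\truncmfld$ has no critical points (again \cref{res:fibering-boundary}) it is a fibration over $S^1$, and the matching boundary component of $\bar F$ is its fibre — a closed flat $4$-manifold, in fact a $4$-torus by the description of the cusp cross-sections in \cref{sec:cusps}. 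Hence $\bar F$ has toric boundary, $F=\interior\bar F$, and passing to interiors gives the stated diffeomorphism type of $\abcover$. Finally $\abcover=\Hyp6/\Gamma$ with $\Gamma=\ker f_*$ is hyperbolic and, $\Gamma$ being torsion-free and $\Hyp6$ contractible, aspherical — so it is a $K(\Gamma,1)$. It is geometrically infinite because $\Gamma$ is an infinite-index normal subgroup of the lattice $\pi_1(M)$: its limit set is a nonempty closed $\pi_1(M)$-invariant subset of $\sphere5$, hence all of $\sphere5$ by minimality of the lattice action on its limit set, and a geometrically finite group with full limit set is a lattice, which $\Gamma$ is not.

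For the group-theoretic statements I would use that $\abcover\simeq X$, where $X$ is obtained from a finite complex homotopy equivalent to $F$ (the interior of the compact manifold $\bar F$) by attaching countably many $3$-cells — the cores of the $3$-handles — along maps $\sphere2\to F$. Attaching cells of dimension $\ge 3$ changes neither $\pi_1$ nor the $2$-skeleton, so $\Gamma=\pi_1(\abcover)=\pi_1(F)$ is the fundamental group of a finite $2$-complex and hence finitely presented. The long exact sequence of the pair $(X,F)$ gives $H_3(F;\QQ)\to H_3(X;\QQ)\to H_3(X,F;\QQ)\xrightarrow{\partial}H_2(F;\QQ)$, with $H_3(X,F;\QQ)\cong\bigoplus_{\NN}\QQ$ (one summand per $3$-cell) and $\partial$ sending each generator to the class of the corresponding attaching sphere; since $\bar F$ is compact, $H_2(F;\QQ)$ is finite-dimensional, so $\ker\partial$ is infinite-dimensional, and $H_3(X;\QQ)$ surjects onto it with finite-dimensional kernel. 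Thus $b_3(\abcover)=\dim_\QQ H_3(\abcover;\QQ)=\infty$. If $\Gamma$ were of type $\finitetype[3]$ it would admit a $K(\Gamma,1)$ with finite $3$-skeleton, forcing $H_3(\Gamma;\QQ)$ finite-dimensional; but $\abcover$ is a $K(\Gamma,1)$, so $H_3(\Gamma;\QQ)=H_3(\abcover;\QQ)$ is infinite-dimensional — a contradiction. Hence $\Gamma$ is finitely presented but not of type $\finitetype[3]$.

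To see $\pi_2(F)\neq 0$ I would argue by contradiction. If $\pi_2(F)=0$, then the universal cover $\widetilde F$ of $F$ has $\pi_1\widetilde F=\pi_2\widetilde F=0$, hence $H_2(\widetilde F)=0$, and the universal cover of $\abcover$ is $\widehat{\abcover}\simeq\widetilde F\cup(\text{$3$-cells})$, which is contractible since $\abcover$ is aspherical. The long exact sequence of $(\widehat{\abcover},\widetilde F)$, together with $\widetilde H_{k+1}(\bigvee\sphere3)=0$ for $k\ge 3$, then forces $H_k(\widetilde F)=0$ for all $k\ge 2$; so $\widetilde F$ is acyclic and simply connected, hence contractible, and $F$ — and therefore the compact manifold $\bar F$ — is aspherical. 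But then $\bar F$ is a finite $K(\Gamma,1)$ and $\Gamma$ is of type $\finitetype$, contradicting the previous paragraph. Therefore $\pi_2(F)=H_2(\widetilde F)\neq 0$.

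I expect the main obstacle to be the first paragraph: making the handle decomposition of the non-compact cover $\overline{\abcover}$ precise — that each index-$3$ critical value contributes a genuine $3$-handle in the interior and that the product cobordisms between consecutive critical values respect the collared boundary — and identifying the boundary tori of $\bar F$ with the fibres of the cusp cross-sections. This is exactly where \cref{res:fibering-boundary} and \cref{lemma:local-pl-handle}, applied to the compact exhaustion of $\overline{\abcover}$, do the work; the homological and group-theoretic steps afterwards are formal.
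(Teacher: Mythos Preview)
Your proposal is correct and follows essentially the same strategy as the paper: lift $f$ to $\RR$, use that all critical points have index $3$ to obtain the handle description of $\abcover$ from $F\times[0,1]$, deduce $\pi_1(\abcover)=\pi_1(F)$ is finitely presented and $b_3(\abcover)=\infty$, and invoke asphericity of $\abcover$ to rule out type $\finitetype[3]$. You supply considerably more detail than the paper does --- the limit-set argument for geometric infiniteness, the careful truncated-manifold setup, the identification of the boundary tori, and the long exact sequence computation for $b_3$ are all left implicit or entirely unaddressed in the paper's proof.

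The one place your route genuinely diverges is the $\pi_2(F)\neq 0$ step. The paper argues in one line: if $\pi_2(F)=0$, then a $K(\pi_1 F,1)$ can be built from $F$ by attaching only cells of dimension $\geq 4$, so $b_3(\pi_1 F)=b_3(F)$ is finite, contradicting $b_3(\pi_1 F)=b_3(\abcover)=\infty$. Your argument --- showing $\widetilde F$ is contractible via the long exact sequence of the pair $(\widehat{\abcover},\widetilde F)$, hence $\bar F$ is a finite $K(\Gamma,1)$ --- reaches the same contradiction but is longer; the paper's shortcut is that killing $\pi_2$ already lets you build the classifying space without touching the $3$-skeleton, so you never need to know that $F$ itself is aspherical.
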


Note that there is a key difference when compared to the $4$-dimensional case. The regular fibres obtained in \cite{BattistaHyperbolic4manifolds} are all hyperbolic $3$-manifolds; this cannot happen in dimension $6$, as they are not aspherical.

\begin{proof}
	Lift $ f \colon \mfld \to S^1 $ to a Morse function $ \lift f \colon \abcover \to \RR $, and let $F$ be the preimage of a regular value $t \in \RR$.

	Since $ \lift f $ has infinitely many critical points, all of index $3$, by Morse theory we can obtain $ \abcover $ from $ F \times [t-\epsilon, t+\epsilon] $ by attaching infinitely many $3$-handles.
	From this, it follows that $b_3(\abcover)$ is infinite.

	Attaching $3$-handles does not change the fundamental group, so $ \pi_1(\abcover) \isom \pi_1(F) $ is finitely presented. Since $ \abcover $ is aspherical, it is a classifying space for $ \pi_1(\abcover) $, so $b_3(\pi_1(\abcover))$ is also infinite. Therefore the fundamental group cannot be of type $ \finitetype[3] $.

	It remains to prove that $ \pi_2(F) $ is nontrivial. If $ \pi_2(F)=0 $, then we could construct a classifying space for $ \pi_1(F) $ by attaching to $F$ cells of dimension $ \geq 4 $, that contradicts $ b_3(\pi_1(F)) = b_3(\abcover)=\infty $.
\end{proof}

Since $ \mfld $ is not compact, its fundamental group is not hyperbolic.
However, it is possible to obtain a hyperbolic group by considering an appropriate \emph{filling} of the cusps, similarly to what was done in \cite[Section 3]{IMMHyperbolic5Manifold}.

By removing a small horoball at each cusp of $ \mfld $, we obtain a compact manifold $ \truncmfld$, with boundary components diffeomorphic to the $5$--torus. By possibly passing to a finite cover, we may assume that each boundary component has systole larger than $2\pi$.

The Morse function $f$ restricts to a fibration on each $5$--torus; we may assume, up to an isotopy, that this fibration has geodesic $4$-tori as fibres. Denote by $ \hat \mfld $ the space obtained by coning every $4$-torus to a point, so that $ f $ extends to a map on $ \hat \mfld $. Let $ \hat F $ be the preimage of a regular value $t$ inside $ \hat \mfld $, $ G\coloneq \pi_1(\hat M) $, and $ H \coloneq \pi_1(\hat F) $.

\begin{proposition}
	The group $G$ is Gromov-hyperbolic and torsion-free, and the subgroup $H$ is finitely presented but not of type $ \finitetype[3] $.
\end{proposition}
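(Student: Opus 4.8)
The plan is to obtain $G$ as a Dehn-type filling of $\pi_1(\truncmfld)$ along the peripheral $\ZZ^5$ subgroups, and then to transfer the finiteness properties of $\pi_1(\abcover)$ (or rather of the interior fibre $F$) through the filling. First I would recall the structure: $\truncmfld$ is a compact aspherical $6$-manifold with boundary components that are $5$-tori, and $f$ restricts on each boundary $5$-torus to a fibration over $S^1$ with fibre a geodesic $4$-torus; coning each such $4$-torus fibre to a point turns each boundary $5$-torus into the mapping torus of a point, i.e.\ a circle, so $\hat\mfld$ is obtained from $\truncmfld$ by a "partial" filling that kills a $\ZZ^4 \le \ZZ^5$ in each peripheral subgroup while leaving the $\ZZ$-direction (mapped isomorphically by $f_*$) intact. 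Concretely, $G = \pi_1(\hat\mfld)$ is the quotient of $\pi_1(\truncmfld)$ by the normal closure of the subgroups $\ZZ^4$ corresponding to the cone loci.

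For hyperbolicity I would invoke the relatively hyperbolic Dehn filling theorem of Osin and Groves--Manning. The group $\pi_1(\mfld) = \pi_1(\truncmfld)$ is hyperbolic relative to its cusp subgroups $\ZZ^5$; the assumption (after passing to a finite cover) that each boundary component has systole larger than $2\pi$ ensures the fillings are "sufficiently long", so by the Dehn filling theorem the filled group $G$ is hyperbolic relative to the images of the peripheral subgroups. But each peripheral $\ZZ^5$ maps onto $\ZZ^5 / \ZZ^4 \isom \ZZ$, which is hyperbolic; a group hyperbolic relative to a finite collection of hyperbolic subgroups is itself hyperbolic (Osin). Torsion-freeness follows because $\hat\mfld$ is aspherical away from the cone points and the cone loci are aspherical, so $\hat\mfld$ is aspherical — more carefully, one checks that coning a geodesic $4$-torus inside a $6$-manifold introduces no new homotopy in low degrees and the result is still aspherical (this is exactly the argument in \cite[Section 3]{IMMHyperbolic5Manifold}), hence $G$ has finite cohomological dimension and is torsion-free.

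For the finiteness properties of $H = \pi_1(\hat F)$ I would argue as in \cref{abcover-betti}. Lifting $f$ to the infinite cyclic cover of $\hat\mfld$, the function $\lift f$ is still Morse with only index-$3$ critical points (coning the boundary fibres does not create new critical points, since on the collars $f$ was already a fibration and the cones are attached fibrewise), and infinitely many of them survive. Thus the infinite cyclic cover is built from $\hat F \times [t-\epsilon,t+\epsilon]$ by attaching infinitely many $3$-handles on both sides; attaching $3$-handles does not change $\pi_1$, so $H$ is finitely presented, and since the infinite cyclic cover is aspherical it is a $K(H,1)$ with $b_3 = \infty$, whence $H$ is not of type $\finitetype[3]$. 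Finally $H \le G$ since $\hat F \hookrightarrow \hat\mfld$ is $\pi_1$-injective (it is the fibre of a circle-valued map, so $\pi_1(\hat F) = \ker(f_* \colon G \to \ZZ)$, visibly a subgroup).

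The main obstacle I expect is verifying the hypotheses of the Dehn filling theorem cleanly — in particular making precise that coning the $4$-torus fibres is a legitimate "filling" in the sense of Groves--Manning (one is not filling along the whole peripheral $\ZZ^5$ but along a corank-one subgroup), and checking that the systole condition translates into the required lower bound on the filling slopes. The asphericity of $\hat\mfld$ near the cone points, needed for torsion-freeness and for $H$ not being $\finitetype[3]$, also requires a short local argument (a cone on an aspherical $k$-torus embedded with trivial normal data in a manifold has the homotopy type one expects in the relevant range); this is routine but should be stated explicitly, referencing the analogous computation in \cite{IMMHyperbolic5Manifold}.
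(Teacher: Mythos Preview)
Your proof is correct, and the argument for $H$ (pass to the infinite cyclic cover, attach infinitely many $3$-handles, conclude finite presentation and infinite $b_3$) is essentially identical to the paper's.

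The difference lies in how you establish that $G$ is hyperbolic and torsion-free. The paper invokes the $2\pi$-theorem of Fujiwara--Manning directly: the systole assumption on the boundary tori guarantees that $\hat M$ carries a locally $\CAT{-1}$ metric, so $\hat M$ is aspherical with hyperbolic, torsion-free fundamental group in one stroke. You instead go through the group-theoretic Dehn filling theorem of Osin and Groves--Manning to get relative hyperbolicity of $G$ over the images $\ZZ \isom \ZZ^5/\ZZ^4$, upgrade to absolute hyperbolicity, and then argue asphericity of $\hat M$ separately to obtain torsion-freeness. This is a legitimate alternative, but it is strictly more work: the asphericity of $\hat M$ is not a purely local computation once you abandon the metric viewpoint (the cone on a $4$-torus is contractible, but the global gluing could in principle create homotopy), and in practice the cleanest way to verify it is precisely the $\CAT{-1}$ metric from Fujiwara--Manning --- which then already gives hyperbolicity and torsion-freeness for free, making the detour through relative Dehn filling unnecessary. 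Your identification of this as ``the main obstacle'' is accurate; the resolution is simply to cite Fujiwara--Manning at the outset, as the paper does.
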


This resembles closely the construction of a group of type $ \finitetype[3] $ not $ \finitetype[4] $ in \cite{LlosaPyMartelli}, based on a Morse function defined on a hyperbolic $8$-manifold $M^8$. %
Since the Morse function on $M^6$ is perfect, which is a much stronger condition than the one on $M^8$, the proof here is much simpler.

\begin{proof}
	The filling is a $ 2\pi $--filling, and by \cite{FujiwaraManning} $ \hat M $ admits a negatively curved metric, so its fundamental group is Gromov-hyperbolic and torsion free.

	The group $H$ is finitely presented, since it is the fundamental group of a compact pseudo-manifold. The infinite cyclic cover $X$ of $ \hat M $ associated with $H$ is aspherical, and can be obtained from $ \hat F \times (-\epsilon, \epsilon) $ by attaching infinitely many $3$-handles (note that $f$ is a locally trivial fibration around the cones of the $4$-tori).

	This implies that $ b_3(X) = \infty $, and that $ \pi_1(X) \isom H $, so $X$ is a classifying space for $H$. So the third Betti number of $H$ is infinite, and therefore $H$ is not of type $ \finitetype[3] $.
\end{proof}

\section{Related results and some questions}
\label{sec:comments}
The main inspirations for this paper are the combinatorial game in \cite{JankiewiczNorinWise}, and some of its applications \cites{BattistaHyperbolic4manifolds, IMMAlgebraicFibering, IMMHyperbolic5Manifold}.
A natural direction of investigation is to continue pursuing this line towards higher dimensions.
\begin{question}
	Do the manifolds $M^7$ and $M^8$ in \cite{IMMAlgebraicFibering} admit a perfect circle-valued Morse function?
\end{question}
In the case of $M^7$ such a function would be a fibration. Following the intuition of this paper, one could try to partition the $56$ facets of $P^7$ into $7$ moves with $8$ faces in each, and use balanced states to define a Morse function. %
However, the resulting function is necessarily null-homotopic on some cusp of $P^7$, as it does not satisfy \cref{res:fibering-cusp-condition}, so it cannot be a fibration. The reason is that $P^7$ has $126$ ideal vertices, but there would be at most $ 4 \cdot 4 \cdot 7 = 112 $ pairs of facets in the same move and with opposite status.
The problem relative to the manifold $M^7$ has also been studied in \cite{FisherAlgFibring7} with more algebraic techniques.

\medskip

In \cite{BattistaHyperbolic4manifolds} some regular and singular fibres of perfect Morse functions on hyperbolic $4$--manifolds were computed.
They all are hyperbolic $3$-manifolds, and therefore aspherical.

In \cref{abcover-betti}, we have shown that regular fibres of $6$-dimensional hyperbolic manifolds cannot be aspherical. It would be interesting to know more about the regular and singular fibres of the example we constructed.

\begin{question}
	What are the regular fibres of the map $f\colon M^6 \to S^1$? What are the singular fibres?
\end{question}

Finally, we focus on the following question.

\begin{question}[\cite{BattistaHyperbolic4manifolds}, Question 27]\label{q:virtualperfect}
	Does every hyperbolic manifold have a finite cover that admits a perfect circle-valued Morse function?
\end{question}
Since perfect Morse functions lift to finite covers, \cref{thm:M6} shows that \cref{q:virtualperfect} has positive answer for all manifolds commensurable to $P^6$.

Passing to a finite cover is necessary, as it is not hard to build a $6$-manifold that does not admit a perfect Morse function. In fact, two obvious obstructions are the vanishing of the first Betti number, or the presence of one cusp that does not fibre over $S^1$. Indeed, both orientable $6$--manifolds built in \cite{EverittRatcliffeTschantz} satisfy both obstruction properties; they have $b_1=0$ and also a cusp section with vanishing~$b_1$.
Regarding closed manifolds, one possible obstruction would be the vanishing of $b_1$, however the authors do not know any example of a closed hyperbolic $6$--manifold $M$ with $b_1(M)=0$ (there are explicit examples in dimension $5$ \cite{Chen5mflds} and $7$ \cite{BergeronClozel}).

Lastly, we point out that there have been also some findings that could suggest a negative answer to \cref{q:virtualperfect}, at least in a more generalised context: Avramidi, Okun and Schreve \cite{AvramidiOkunSchreve} constructed a closed, aspherical $7$--manifold with Gromov--hyperbolic fundamental group which does not virtually fibre over $S^1$. %

\clearpage

\bibliography{references}

\end{document}